\newcommand{\SComment}[1]{\Comment{{\scriptsize #1}}} 
\theoremstyle{plain}
	\newtheorem{theorem}{Theorem}
	\newtheorem{corollary}{Corollary}
	\newtheorem{lemma}{Lemma}
\theoremstyle{definition}
\theoremstyle{remark}
\newcolumntype{P}[1]{>{\centering\arraybackslash}p{#1}}
\newcolumntype{M}[1]{>{\centering\arraybackslash}m{#1}}
\newcommand{\tr}[1]{\text{tr}\left(#1\right)}
\newcommand{\chol}{\texttt{chol}\xspace}
\newcommand{\Proj}[1]{\texttt{Proj}\left(#1\right)}
\newcommand{\QR}[1]{\texttt{QR}\left(#1\right)}
\newcommand{\spR}{\mathbb{R}}				
\newcommand{\vv}{\bm{v}}
\newcommand{\vy}{\bm{y}}
\newcommand{\vC}{\bm{C}}
\newcommand{\vD}{\bm{D}}
\newcommand{\vE}{\bm{E}}
\newcommand{\vEproj}{\Delta\vE_{\text{proj}}}
\newcommand{\vG}{\bm{G}}
\newcommand{\vGtil}{\tilde{\vG}}
\newcommand{\vGbar}{\bar{\vG}}
\newcommand{\vH}{\bm{H}}
\newcommand{\vHtil}{\tilde{\vH}}
\newcommand{\vHbar}{\bar{\vH}}
\newcommand{\vM}{\bm{M}}
\newcommand{\vQ}{\bm{Q}}
\newcommand{\vQbar}{\bar{\vQ}}
\newcommand{\vQtil}{\widetilde{\vQ}}
\newcommand{\vU}{\bm{U}}
\newcommand{\vUbar}{\bar{\vU}}
\newcommand{\vV}{\bm{V}}
\newcommand{\vVbar}{\bar{\vV}}
\newcommand{\vVtil}{\widetilde{\vV}}
\newcommand{\vW}{\bm{W}}
\newcommand{\vWbar}{\bar{\vW}}
\newcommand{\vWtil}{\widetilde{\vW}}
\newcommand{\vX}{\bm{X}}
\newcommand{\vY}{\bm{Y}}
\newcommand{\vZ}{\bm{Z}}
\newcommand{\vZbar}{\bar{\vZ}}
\newcommand{\Pbar}{\bar{P}}
\newcommand{\Rbar}{\bar{R}}
\newcommand{\Sbar}{\bar{S}}
\newcommand{\Tbar}{\bar{T}}
\newcommand{\Ombar}{\bar{\Omega}}
\newcommand{\Ybar}{\bar{Y}}
\newcommand{\RR}{\mathcal{R}}
\newcommand{\RRbar}{\bar{\RR}}
\renewcommand{\SS}{\mathcal{S}}
\newcommand{\SSbar}{\bar{\SS}}
\newcommand{\TT}{\mathcal{T}}
\newcommand{\YY}{\mathcal{Y}}
\newcommand{\YYbar}{\bar{\mathcal{Y}}}
\newcommand{\bQQ}{\bm{\mathcal{Q}}}
\newcommand{\bQQbar}{\bar{\bQQ}}
\newcommand{\bQQbarnew}{\bar{\bQQ}_{new}}
\newcommand{\bQQbarprev}{\bar{\bQQ}_{prev}}
\newcommand{\bQQnew}{\bQQ_{new}}
\newcommand{\bQQprev}{\bQQ_{prev}}
\newcommand{\bUU}{\bm{\mathcal{U}}}
\newcommand{\bXX}{\bm{\mathcal{X}}}
\newcommand{\inv}{{-1}}
\newcommand{\tinv}{{-T}}
\newcommand{\ihalf}{{-1/2}}
\newcommand{\bmat}[1]{\begin{bmatrix}#1\end{bmatrix}}
\newcommand{\bigO}[1]{\mathcal{O}\left(#1\right)}
\newcommand{\sigmin}{\sigma_{\min}}
\newcommand{\norm}[1]{\left\lVert#1\right\rVert}
\newcommand{\normF}[1]{\norm{#1}_{\text{F}}}
\newsavebox{\@brx}
\newcommand{\llangle}[1][]{\savebox{\@brx}{\(\m@th{#1\langle}\)}%
	\mathopen{\copy\@brx\kern-0.5\wd\@brx\usebox{\@brx}}}
\newcommand{\rrangle}[1][]{\savebox{\@brx}{\(\m@th{#1\rangle}\)}%
	\mathclose{\copy\@brx\kern-0.5\wd\@brx\usebox{\@brx}}}
\newcommand{\eps}{\varepsilon}
\newcommand{\monomial}{\texttt{monomial}\xspace}
\newcommand{\piled}{\texttt{piled}\xspace}
\newcommand{\BCGS}{\hyperref[alg:BCGSA]{\texttt{BCGS}}\xspace}	
\newcommand{\BCGSA}{\hyperref[alg:BCGSA]{\texttt{BCGS-A}}\xspace}	
\newcommand{\BCGSIRO}{\hyperref[alg:BCGSIROA]{\texttt{BCGSI+}}\xspace}	
\newcommand{\BCGSIROA}{\hyperref[alg:BCGSIROA]{\texttt{BCGSI+A}}\xspace}	
\newcommand{\BCGSIROAthree}{\hyperref[alg:BCGSIROA3S]{\texttt{BCGSI+A-3S}}\xspace}
\newcommand{\BCGSIROAtwo}{\hyperref[alg:BCGSIROA2S]{\texttt{BCGSI+A-2S}}\xspace}
\newcommand{\BCGSIROAone}{\hyperref[alg:BCGSIROA1S]{\texttt{BCGSI+A-1S}}\xspace}
\newcommand{\BCGSIROLS}{\texttt{BCGSI+LS}\xspace}	
\newcommand{\BMGS}{\texttt{BMGS}\xspace}	
\newcommand{\BCGSIROF}{\texttt{BCGSI+F}\xspace}	
\newcommand{\BCGSCP}{\texttt{BCGS-CP}\xspace}	
\newcommand{\IO}[1]{\texttt{IO}\left(#1\right)}	
\newcommand{\IOnoarg}{\texttt{IO}\xspace}	
\newcommand{\IOnoargs}{\texttt{IO}s\xspace} 
\newcommand{\IOA}[1]{\texttt{IO}_{\mathrm{A}}\left(#1\right)}
\newcommand{\IOAnoarg}{\texttt{IO}_{\mathrm{A}}\xspace}
\newcommand{\IOone}[1]{\texttt{IO}_1\left(#1\right)}
\newcommand{\IOonenoarg}{\texttt{IO}_1\xspace}
\newcommand{\IOtwo}[1]{\texttt{IO}_2\left(#1\right)}
\newcommand{\IOtwonoarg}{\texttt{IO}_2\xspace}
\newcommand{\CGS}{\texttt{CGS}\xspace}	
\newcommand{\MGS}{\texttt{MGS}\xspace}	
\newcommand{\HouseQR}{\texttt{HouseQR}\xspace}	
\newcommand{\GivensQR}{\texttt{GivensQR}\xspace}	
\newcommand{\TSQR}{\texttt{TSQR}\xspace}	
\newcommand{\CholQR}{\texttt{CholQR}\xspace}	
\newcommand{\epsqr}{\rho_{\mathrm{qr}}}
\newcommand{\epsproj}{\psi}
\newcommand{\epsprojbcgsa}{\psi^{(\BCGSA)}}
\newcommand{\epsprojbcgsiroaone}{\psi^{(\BCGSIROAone)}}
\newcommand{\epsprojbcgsiroathree}{\psi^{(\BCGSIROAthree)}}
\newcommand{\epsQ}{\omega_{\mathrm{qr}}}
\newcommand{\epsQone}{\omega_{prev}}
\newcommand{\epsQtwo}{\omega_{new}}
\newcommand{\epsQk}{\omega_k}
\newcommand{\epsQkp}{\omega_{k-1}}
\newcommand{\epsQip}{\omega_{i-1}}
\newcommand{\epsXkp}{\rho_{k-1}} 
\newcommand{\constQ}{\alpha}
\newcommand{\constQA}{\alpha_{\mathrm{A}}}
\definecolor{plotblue}{RGB}{0, 113.9850, 188.9550}
\definecolor{plotred}{RGB}{216.7500, 82.8750, 24.9900}
\definecolor{plotpurple}{RGB}{125.9700, 46.9200, 141.7800}
\newcommand{\first}[1]{\textcolor{plotblue}{#1}}
\newcommand{\second}[1]{\textcolor{plotred}{#1}}
\newcommand{\combo}[1]{\textcolor{plotpurple}{#1}}
\begin{document}


\title{On the loss of orthogonality in low-synchronization variants of reorthogonalized block classical Gram-Schmidt}

\author[$\ast$]{Erin Carson}
\affil[$\ast$]{Department of Numerical Mathematics, Faculty of Mathematics and Physics, Charles University, Sokolovsk\'{a} 49/83, 186 75 Praha 8, Czechia\authorcr \email{\{carson, oktay\}@karlin.mff.cuni.cz}, \email{yuxin.ma@matfyz.cuni.cz}, \orcid{0000-0001-9469-7467}, \orcid{0000-0003-0761-2184}, \orcid{0000-0002-2860-0134}}

\author[$\dagger, \ddagger$]{Kathryn Lund}
\affil[$\dagger$]{Computational Mathematics Theme, Building R71, STFC Rutherford Appleton Laboratory, Harwell Oxford, Didcot, Oxfordshire, OX11 0QX, United Kingdom\authorcr \email{kathryn.lund@stfc.ac.uk}, \orcid{0000-0001-9851-6061}}
\affil[$\ddagger$]{Computational Methods in Systems and Control Theory, Max Planck Institute for Dynamics of Complex Technical Systems, Sandtorstr.\ 1, 39106 Magdeburg, Germany}

\author[$\ast$]{Yuxin Ma}

\author[$\ast,\S$]{Eda Oktay}
\affil[$\S$]{Department of Mathematics, Chemnitz University of Technology, Reichenhainer Str.\ 41, 09126 Chemnitz, Germany}

\shortauthor{E. Carson, K. Lund, Y. Ma, and E. Oktay}

\keywords{
    backward stability, Gram-Schmidt, low-synchronization, communication-avoiding, Arnoldi method, Krylov subspaces, loss of orthogonality
}

\msc{
    65-04, 65F10, 65F25, 65F50, 65G50, 65Y05
}
  
\abstract{
    Interest in communication-avoiding orthogonalization schemes for high-performance computing has been growing recently.  This manuscript addresses open questions about the numerical stability of various block classical Gram-Schmidt variants that have been proposed in the past few years.  An abstract framework is employed, the flexibility of which allows for new rigorous bounds on the loss of orthogonality in these variants. We first analyze a generalization of (reorthogonalized) block classical Gram-Schmidt and show that a ``strong'' intrablock orthogonalization routine is only needed for the very first block in order to maintain orthogonality on the level of the unit roundoff. In particular, this ``strong" first step does not have to be a reorthogonalized QR itself and subsequent steps can use less stable QR variants, thus keeping the overall communication costs low.

Then, using this variant, which has four synchronization points per block column, we remove the synchronization points one at a time and analyze how each alteration affects the stability of the resulting method. Our analysis shows that the variant requiring only one synchronization per block column, equivalent to a variant previously proposed in the literature, cannot be guaranteed to be stable in practice, as stability begins to degrade with the first reduction of synchronization points. As a negative result, we conclude that this particular block algorithm should be avoided in practice.

Our analysis of block methods also provides new, more positive theoretical results for the single-column case. In particular, it is proven that DCGS2 from [Bielich, D. et al. \emph{Par. Comput.} 112 (2022)] and CGS-2 from [\'{S}wirydowicz, K. et al, \emph{Num. Lin. Alg. Appl.} 28 (2021)] are as stable as Householder QR.  Numerical examples from the \texttt{BlockStab} toolbox are included throughout, to help compare variants and illustrate the effects of different choices of intraorthogonalization subroutines.

}

\novelty{Bounds on the loss of orthogonality are proven for a block Gram-Schmidt variant with one synchronization point.  As a by-product, bounds on the order of unit roundoff are obtained for the single-column variant, which was until now an open problem.  New bounds are also proven for block classical Gram-Schmidt, and a variety of numerical examples are provided to confirm the theoretical results.}

\maketitle



\section{Introduction} \label{sec:intro}
With the advent of exascale computing, there is a pressing need for highly parallelizable algorithms that also reduce \emph{communication}, i.e., data movement and synchronization.  An underlying kernel in diverse numerical linear applications is the orthogonalization of a matrix, whose efficiency is limited by inner products and vector normalizations involving \emph{synchronization points (sync points)}, which dominate communication costs.  In the case of an inner product or norm, a sync point arises when a vector is stored in a distributed fashion across nodes: each node locally computes part of the inner product and then must transmit its result to all other nodes so that each can assemble the full inner product.  Communication-avoiding methods such as $s$-step Krylov subspace methods have proven to be effective adaptations in practice; see, e.g., \cite{BalCDetal14, Car15, Hoe10, YamTHetal20}. Such methods implicitly rely on a stable block Gram-Schmidt (BGS) routine that should itself be communication-avoiding.  Blocking alone reduces the number of sync points, as previously vector-wise operations can instead be performed on tall-skinny matrices or \emph{block vectors}, thus replacing single inner products with block inner products and normalizations with low-sync QR factorizations.  Low-sync variants of BGS have attracted much recent attention \cite{BieLTetal22, CarLR21, CarLRetal22, CarLMetal24a, OktC23, SwiLAetal21, YamTHetal20, YamHBetal24, Zou23}, but their stability, in particular how well they preserve orthogonality between basis vectors, is often poor, which can lead to issues in downstream applications like Krylov subspace methods, least-squares, or eigenvalue solvers. Understanding the floating-point stability of low-sync BGS methods is thus imperative for their reliable deployment in exascale environments.

To be more precise, we define a \emph{block vector} $\vX \in \spR^{m \times s}$ as a concatenation of $s$ column vectors.  We are interested in computing an economic QR decomposition for the concatenation of $p$ block vectors
\[
    \bXX = \begin{bmatrix} \vX_1 & \vX_2 & \cdots \vX_p \end{bmatrix} \in \spR^{m \times ps}
\]
with $m\geq ps$.
We achieve this via a BGS procedure that takes $\bXX$ and a block size $s$ as arguments and returns an orthogonal basis $\bQQ \in \spR^{m \times ps}$, along with an upper triangular $\RR \in \spR^{ps \times ps}$ such that $\bXX = \bQQ \RR$.  Both $\bQQ$ and $\RR$ are computed block-wise, meaning that $s$ new columns of $\bQQ$ are generated per iteration, as opposed to just one column at a time.

In addition to sync points, we are also concerned with the stability of BGS, which we measure here in terms of the \emph{loss of orthogonality (LOO)},
\begin{equation} \label{eq:loo}
    \norm{I - \bQQbar^T \bQQbar},
\end{equation}
where $I$ is the $ps \times ps$ identity matrix and $\bQQbar \in \spR^{m \times ps}$ denotes a computed basis with floating-point error.  We will also consider the relative residual
\begin{equation} \label{eq:rel_res}
    \frac{\norm{\bXX - \bQQbar \RRbar}}{\norm{\bXX}}
\end{equation}
and relative Cholesky residual,
\begin{equation} \label{eq:rel_chol_res}
    \frac{\norm{\bXX^T \bXX - \RRbar^T \RRbar}}{\norm{\bXX}^2},
\end{equation}
where $\RRbar$ is the computed version of $\RR$ with floating-point error and $\norm{\cdot}$ denotes the induced 2-norm.  The residual \eqref{eq:rel_chol_res} measures how close a BGS method is to correctly computing a Cholesky decomposition of $\bXX^T \bXX$, which can provide insight into the stability pitfalls of a method; see, e.g., \cite{CarLR21, GirLRetal05}.

The ideal BGS would require one sync point per block vector and return $\bQQbar$ and $\RRbar$ such that \eqref{eq:loo}-\eqref{eq:rel_chol_res} are $\bigO{\eps}$, where $\eps$ denotes the \emph{unit roundoff}, without any conditions on $\bXX$, except perhaps that the 2-norm condition number $\kappa(\bXX)$ is no larger than $\bigO{\eps^\inv}$.  To the best of our knowledge, no such BGS method exists, and one must make trade-offs regarding the number of sync points and stability.  In practice, the acceptable level of the LOO is often well above machine precision; e.g., the Generalized Minimal Residual (GMRES) method is known to be backward stable with Arnoldi based on modified Gram-Schmidt (\MGS), whose LOO depends linearly on the condition number of $\bXX$ \cite{GreRS97}.  A similar result for block GMRES remains open, however \cite{ButHMetal24}.

A key issue affecting the stability of a BGS method is the choice of \emph{intraorthogonalization} routine, or the so-called ``local" QR factorization of a single block column; in the pseudocode throughout this manuscript, we denote this routine as ``intraortho" or simply \IOnoarg.  Traditional Householder QR (\HouseQR) or Givens QR (\GivensQR) are common choices due to their unconditional $\bigO{\eps}$ LOO, but they introduce additional sync points \cite{Hig02, GolV13}.  One-sync variants include, e.g., Tall-Skinny QR (\TSQR \cite{DemGHetal12, DemGGetal15}), also known as \texttt{AllReduceQR} \cite{MorYZ12}, and \CholQR \cite{YamNYetal15}.  \TSQR /\texttt{AllReduceQR} is known to have $\bigO{\eps}$ LOO, while that of \CholQR is bounded by $\bigO{\eps} \kappa^2(\vX)$, where $\kappa^2(\vX)$ itself should be bounded by $\bigO{\eps^\ihalf}$.

In this manuscript, we focus on reorthogonalized variants of block classical Gram-Schmidt (\BCGS).  We begin by proving LOO bounds on \BCGS, which has not been done rigorously before to the best of our knowledge (Section~\ref{sec:bcgs_iro_a}).  A key feature of our analysis is an abstract framework that highlights the effects of the projection and intraortho stages.  Furthermore, we consider a variant of \BCGS that only requires a ``strong" first step (\BCGSA).  Although this modification does not improve the numerical behavior of \BCGS, its reorthogonalized variant \BCGSIROA enjoys $\bigO{\eps}$ LOO with more relaxed assumptions on the subsequent \IOnoargs than those of Barlow and Smoktunowicz \cite{BarS13} and Barlow \cite{Bar24}(\BCGSIRO), and lower communication cost.  In Section~\ref{sec:roadmap}, we derive a BGS method with one sync point from \BCGSIROA, which is similar to the one-sync variant \BCGSIROLS (\cite[Algorithm~7]{CarLRetal22} and \cite[Figure~3]{YamTHetal20}), and is a block analogue of DCGS2 and CGS-2 with Normalization and Reorthogonalization Lags (\cite[Algorithm~2]{BieLTetal22} and \cite[Algorithm~3]{SwiLAetal21}, respectively).  Unlike \cite{BieLTetal22, SwiLAetal21}, we do not use the notion of lags or delays; instead, we view things in terms of shifting the window of the for-loop, which simplifies the mathematical analysis.  We derive the one-sync algorithm in three stages-- \BCGSIROAthree, \BCGSIROAtwo, and \BCGSIROAone-- in order to systematically demonstrate how new floating-point error is introduced with the successive removal of sync points.  We then prove stability bounds for these algorithms in Section~\ref{sec:ls_proofs}.  A summary and discussion of all the bounds is provided in Section~\ref{sec:recap}, along with an important corollary: \BCGSIROAone achieves $\bigO{\eps}$ LOO for $s=1$.  In other words, \cite[Algorithm~2]{BieLTetal22} and \cite[Algorithm~3]{SwiLAetal21} are effectively as stable as \HouseQR, which thus far has not been rigorously proven.  Section~\ref{sec:conclusions} concludes our work with an outlook for future directions.

A few remarks regarding notation are necessary before proceeding.  Generally, uppercase Roman letters ($R_{ij}, S_{ij}, T_{ij}$) denote $s \times s$ block entries of a $ps \times ps$ matrix, which itself is usually denoted by uppercase Roman script ($\RR, \SS, \TT$).  A block column of such matrices is denoted with MATLAB indexing:
\[
    \RR_{1:k-1,k} = \begin{bmatrix}
        R_{1,k} \\ R_{2,k} \\ \vdots \\ R_{k-1,k}
    \end{bmatrix}.
\]
For simplicity, we also abbreviate standard $ks \times ks$ submatrices as $\RR_k := \RR_{1:k,1:k}$.

Bold uppercase Roman letters ($\vQ_k$, $\vX_k$, $\vU_k$) denote $m \times s$ block vectors, and bold, uppercase Roman script ($\bQQ, \bXX, \bUU$) denotes an indexed concatenation of $p$ such vectors.  Standard $m \times ks$ submatrices are abbreviated as
\[
    \bQQ_k := \bQQ_{1:k} =
    \begin{bmatrix}
        \vQ_1 & \vQ_2 & \cdots & \vQ_k
    \end{bmatrix}.
\]

We will aim for bounds in terms of the induced 2-norm $\norm{\cdot}$, but we will also make use of the Frobenius norm in the analysis, denoted as $\normF{\cdot}$.  By $\tr{A}$ we denote the trace of a square matrix $A$. Furthermore, we always take $\kappa(A)$ to mean the 2-norm condition number defined as the ratio between the largest and smallest singular values of $A$.
\section{Improved stability of \texttt{BCGS} with inner reorthgonalization} \label{sec:bcgs_iro_a}
It is well known that \BCGS is itself low-sync, in the sense that only 2 sync points are required per block vector, assuming that we only employ \IOnoargs that themselves require just one sync point (e.g., \TSQR or \CholQR).  In comparison to block modified Gram-Schmidt (\BMGS), which requires $k+1$ sync points for the $k$th iteration, a fixed number of sync points per iteration can have clear performance benefits; see, e.g., \cite{BieLTetal22, Lun23, SwiLAetal21, YamTHetal20}.

We will consider a slight modification to \BCGS in the first step.  We call this variant \BCGSA, as in Algorithm~\ref{alg:BCGSA}, where ``A" here stands for ``alpha" or the German ``Anfang", as the key change is made at the beginning of the algorithm.  The idea is to require that $\IOAnoarg$ be strongly stable like \HouseQR and then allow for more flexibility in the $\IOnoarg$ used in the for-loop.

\begin{algorithm}[htbp!]
	\caption{$[\bQQ, \RR] = \BCGSA(\bXX, \IOAnoarg, \IOnoarg)$ \label{alg:BCGSA}}
	\begin{algorithmic}[1]
		\State{$[\vQ_1, R_{11}] = \IOA{\vX_1}$}
		\For{$k = 2, \ldots,p$}
		    \State{$\RR_{1:k-1,k} = \bQQ_{k-1}^T \vX_k$} \label{line:bcgs-proj-begin}
		    \State{$[\vQ_k, R_{kk}] = \IO{\vX_k - \bQQ_{k-1} \RR_{1:k-1,k}}$} \label{line:bcgs-proj-end}
		\EndFor
		\State \Return{$\bQQ = [\vQ_1, \ldots, \vQ_p]$, $\RR = (R_{ij})$}
	\end{algorithmic}
\end{algorithm}

Unfortunately, \BCGS (i.e., \BCGSA with $\IOAnoarg = \IOnoarg$) can exhibit an LOO worse than $\bigO{\eps} \kappa^2(\bXX)$, and the situation is no better for \BCGSA.  A natural solution often seen in practice is to run \BCGS twice and combine the for-loops, leading to what we call \BCGSIRO, with \texttt{I+} standing for ``inner reorthogonalization"; see Algorithm~\ref{alg:BCGSIROA} but assume $\IOAnoarg = \IOonenoarg = \IOtwonoarg$.  \BCGSIRO has 4 sync points per iteration and has been analyzed by Barlow and Smoktunowicz \cite{BarS13}, who show that as long as the $\IOnoarg$ has $\bigO{\eps}$ LOO, then the overall method also has $\bigO{\eps}$ LOO.

Figure~\ref{fig:roadmap_1} provides a comparison among \BCGS, \BCGSA and \BCGSIRO for different choices of \IOnoargs.  All plots in this section are generated by the MATLAB toolbox \texttt{BlockStab}\footnote{\url{https://github.com/katlund/BlockStab/releases/tag/v2.1.2024}} in double precision ($\eps \approx 10^{-16}$) on what are called \monomial\footnote{Each such matrix is a concatenation of $r$ block vectors $\vX_k = \bmat{\vv_k & A\vv_k & \cdots & A^{t-1} \vv_k}$,\\$k \in \{1, \ldots, r\}$, where each $\vv_k$ is is randomly generated from the uniform distribution with norm $1$, while $A$ is an $m \times m$ diagonal operator having evenly distributed eigenvalues in $(0.1, 10)$.  A sequence of such matrices with growing condition number is generated by varying $r$ and $t$; in particular, $rt = ps$, but it is not necessary that $r = p$ or $t = s$.} matrices; see the script \texttt{test\_roadmap.m}.  We have used MATLAB 2024a on a Dell laptop running Windows 10 with an 12th Gen Intel Core i7-1270P processor and 32GB of RAM.  We use notation like $\texttt{BGS} \circ \IOnoarg$ to denote the composition of the outer block ``skeleton" and intraorthogonalizing ``muscle".  We have fixed $\IOAnoarg = \HouseQR$ for all ``A" methods, and only the choice of $\IOnoarg = \IOonenoarg = \IOtwonoarg$ is reported in the legends.  Note that \CholQR is implemented without a fail-safe for violating positive definiteness.\footnote{See the \texttt{chol\_free} subroutine in \texttt{BlockStab}, based on \cite[Algorithm~10.2]{Hig02}.}

\begin{figure}[htbp!]
	\begin{center}
	    \begin{tabular}{cc}
	         \resizebox{.35\textwidth}{!}{\includegraphics[trim={0 0 180pt 0},clip]{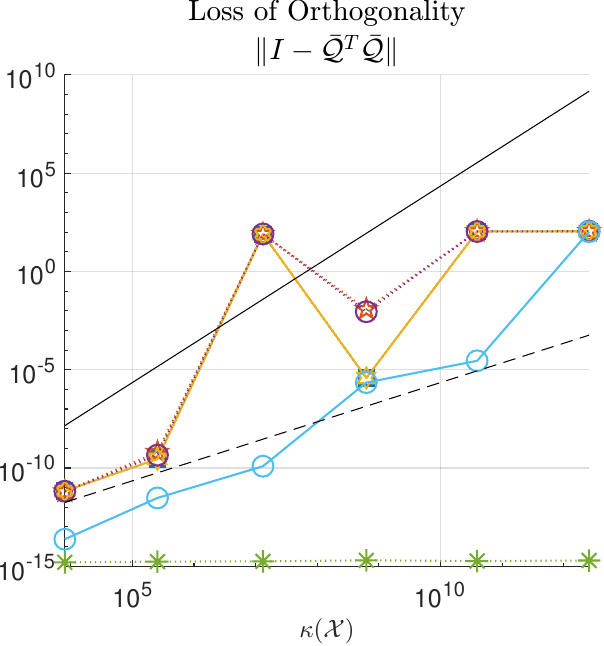}} &
	         \resizebox{.57\textwidth}{!}{\includegraphics{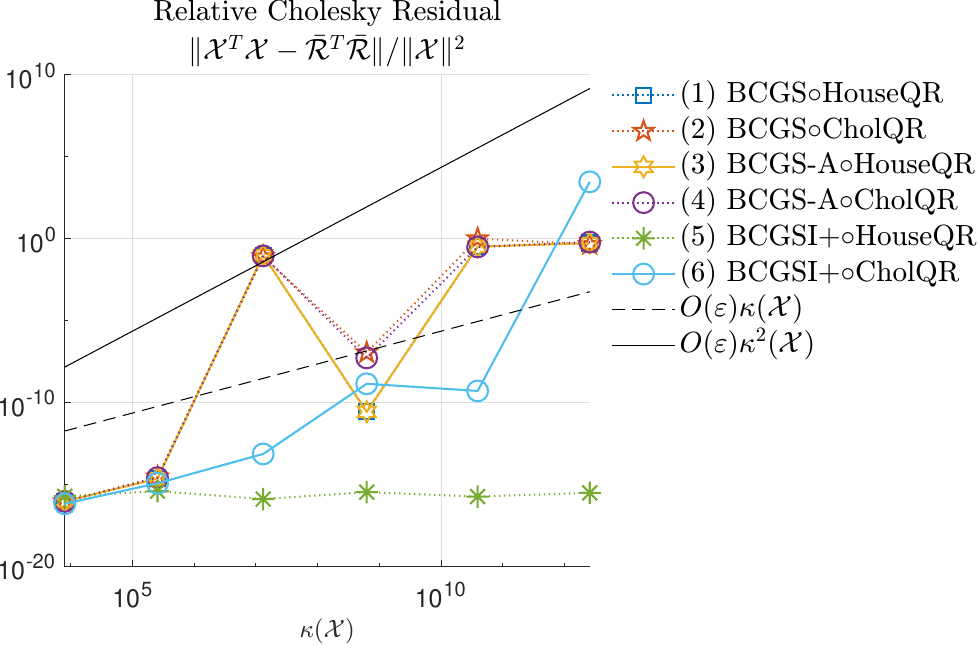}}
	    \end{tabular}
	\end{center}
	\caption{Comparison among \BCGS, \BCGSA, and \BCGSIRO on a class of \monomial matrices from \texttt{BlockStab}. \label{fig:roadmap_1}}
\end{figure}

It is clear from Figure~\ref{fig:roadmap_1} that \BCGSIRO does not exhibit $\bigO{\eps}$ LOO for $\IOnoarg = \CholQR$.  However, by requiring the first block vector to be orthogonalized by something as stable as \HouseQR, we can relax the requirement for subsequent \IOnoargs and prove a stronger result than in \cite{BarS13}.  We denote this modified algorithm \BCGSIROA, given as Algorithm~\ref{alg:BCGSIROA}.  \BCGSIROA can also be interpreted as a generalization of the \BCGSIROF approach introduced in \cite{Zou23}.

The way Algorithm~\ref{alg:BCGSIROA} is written, it would appear that we store three auxiliary matrices $\SS, \TT \in \spR^{ps \times ps}$ and $\bUU \in \spR^{m \times ps}$, where $\SS_{ij} = (S_{ij})$, $\TT_{ij} = (T_{ij})$, and $\bUU = \bmat{\vU_1 & \vU_2 & \cdots & \vU_p}$.  In practice, the entire matrices need not be stored and built, but their theoretical construction is helpful for proving stability bounds.  Further note the three different colors used for different sections of the algorithm: \first{blue} for the first BGS step, \second{red} for the second, and \combo{purple} for combining quantities from each step to finalize entries of $\RR$.  These colors may help some readers in Section~\ref{sec:roadmap} when we derive variants with fewer sync points.

\begin{algorithm}[htbp!]
	\caption{$[\bQQ, \RR] = \BCGSIROA(\bXX, \IOAnoarg, \IOonenoarg, \IOtwonoarg)$ \label{alg:BCGSIROA}}
	\begin{algorithmic}[1]
		\State{$[\vQ_1, R_{11}] = \IOA{\vX_1}$} 
		\For{$k = 2, \ldots,p$}
		    \State \first{$\SS_{1:k-1,k} = \bQQ_{k-1}^T \vX_k$}  \SComment{step k.1.1 -- first projection} \label{BCGSIROAS}
		    \State \first{$[\vU_k, S_{kk}] = \IOone{\vX_k - \bQQ_{k-1} \SS_{1:k-1,k}}$} \SComment{step k.1.2 -- first intraortho} \label{BCGSIROA0}
		    \State \second{$\TT_{1:k-1,k} = \bQQ_{k-1}^T \vU_k$}  \SComment{step k.2.1 -- second projection} \label{BCGSIROA2}
		    \State \second{$[\vQ_k, T_{kk}] = \IOtwo{\vU_k - \bQQ_{k-1} \TT_{1:k-1,k}}$} \SComment{step k.2.2 -- second intraortho} \label{BCGSIROA1}
		    \State \combo{$\RR_{1:k-1,k} = \SS_{1:k-1,k} + \TT_{1:k-1,k} S_{kk}$} \SComment{step k.3.1 -- form upper $\RR$ column}
		    \State \combo{$R_{kk} = T_{kk} S_{kk}$} \SComment{step k.3.2 -- form $\RR$ diagonal entry}
		\EndFor
		\State \Return{$\bQQ = [\vQ_1, \ldots, \vQ_p]$, $\RR = (R_{ij})$}
	\end{algorithmic}
\end{algorithm}

Figure~\ref{fig:roadmap_2} demonstrates the improved behavior of \BCGSIROA relative to \BCGSIRO.  In particular, note that the relative Cholesky residual is restored to $\bigO{\eps}$ for $\IOnoarg = \CholQR$, indicating that \BCGSIROA returns a reliable Cholesky factor $\RR$ for a wider range of \IOnoargs than \BCGSIRO.  Practically speaking, \BCGSIROA only needs an expensive (but stable) \IOnoarg once at the beginning, and less expensive (and even less stable) \IOnoargs for the remaining iterations.

\begin{figure}[htbp!]
	\begin{center}
	    \begin{tabular}{cc}
	         \resizebox{.355\textwidth}{!}{\includegraphics[trim={0 0 190pt 0},clip]{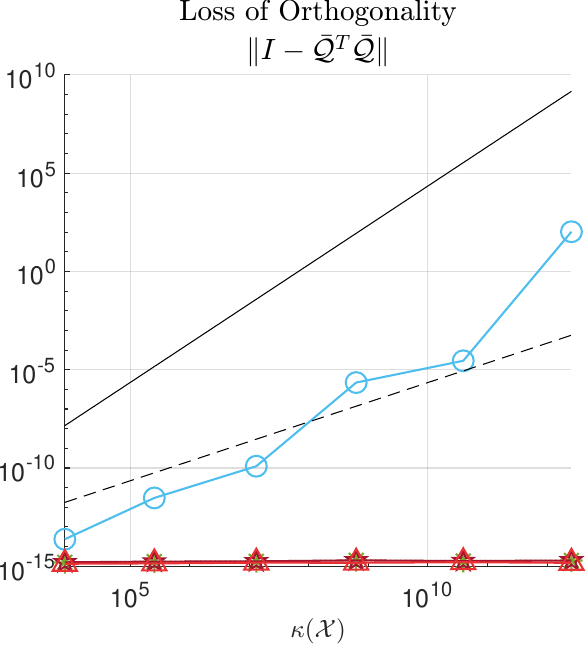}} &
	         \resizebox{.58\textwidth}{!}{\includegraphics{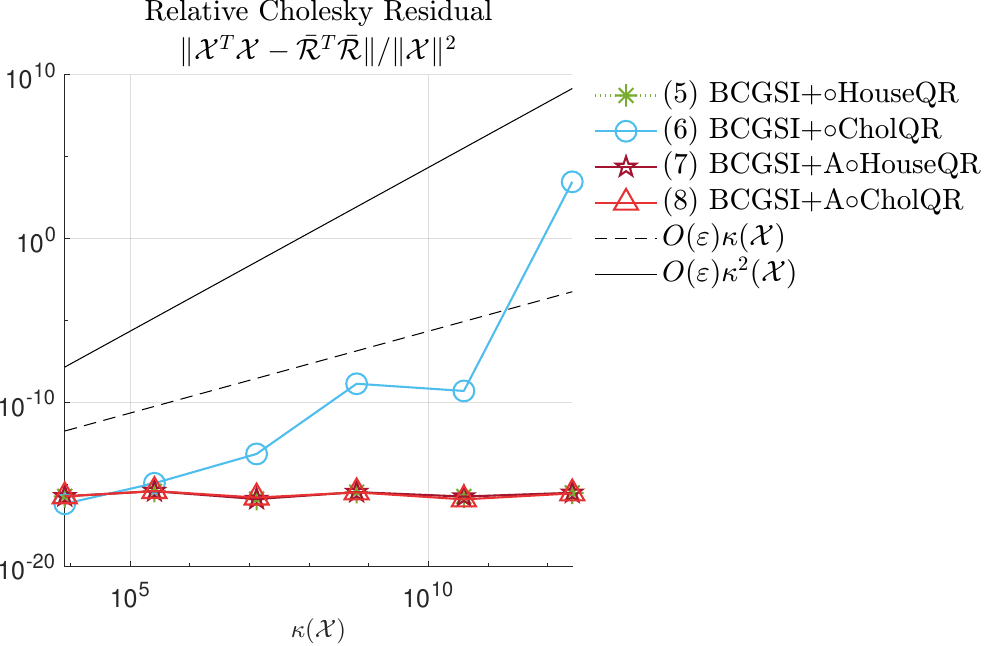}}
	    \end{tabular}
	\end{center}
	\caption{Comparison between \BCGSIRO (i.e., Algorithm~\ref{alg:BCGSIROA} with all \IOnoargs equal) and \BCGSIROA ($\IOAnoarg = \HouseQR$ and $\IOonenoarg = \IOtwonoarg$) on a class of \monomial matrices. \label{fig:roadmap_2}}
\end{figure}

In the following subsections, we introduce an abstract framework for handling the stability analysis of a general BGS routine by splitting it into projection and intraortho stages.  We then prove bounds for \BCGSA and \BCGSIROA encompassing a wide variety of configurations.

\subsection{An abstract framework for block Gram-Schmidt} \label{sec:framework}
For a general BGS procedure, given $\bQQprev$ satisfying $\bQQprev^T\bQQprev = I$ and $\vX$, each iteration aims to compute an orthogonal basis $\vQ$ of the next block vector $\vX$ that satisfies $\bQQnew^T\bQQnew = I$ with $\bQQnew = [\bQQprev, \vQ ]$. Furthermore, the iteration can be divided into two stages:
\begin{equation}
    \begin{split}
        & \text{projection stage:} \quad \vG = \Proj{\vX, \bQQprev}; \mbox{ and}\\
        & \text{intraortho stage:} \quad [\vQ, R] = \QR{\vG}.
    \end{split}
\end{equation}
In practice, $\Proj{\cdot}$ represents an algorithmic choice that has different rounding error consequences.  Generally, the intraortho stage will be identical to the choice(s) of \IOnoarg\footnote{Indeed, the intraortho stage could be replaced by a more general factorization than one that returns an orthogonal basis; see, e.g., \cite{Zou23}.}.

Let $\vGtil$ denote the exact result of $\Proj{\vX, \bQQbarprev}$, where $\bQQbarprev$ is the computed version of $\bQQprev$ from a given algorithm.  Taking rounding errors into account, the computed quantity of the projection stage would then satisfy
\begin{equation} \label{eq:epsproj}
    \vGbar = \vGtil + \vEproj, \quad \norm{\vEproj} \leq \epsproj,
\end{equation}
where $\epsproj > 0$ is related to the unit roundoff $\eps$, the condition number $\kappa(\vX)$, and the definition of \texttt{Proj} itself.  Meanwhile, the computed quantities of the intraortho stage satisfy
\begin{equation} \label{eq:epsqr}
    \begin{split}
        & \vGbar + \Delta \vG = \vQbar \Rbar, \quad \norm{\Delta \vG} \leq \epsqr \norm{\vGbar}, \\
        & \norm{I - \vQbar^T \vQbar} \leq \epsQ,
    \end{split}
\end{equation}
where $\epsqr \in (0,1]$ and $\epsQ < 1$ depend on the \IOnoarg.  For example, \eqref{eq:epsqr} holds for $\HouseQR$ and $\GivensQR$, which satisfy
\begin{equation} \label{eq:epsqr-1}
    \begin{split}
        & \vGbar + \Delta \vGtil = \vQtil \Rbar, \quad \norm{\Delta \vGtil} \leq \bigO{\eps} \norm{\vGbar}, \\
        & \vQbar = \vQtil+ \Delta\vQ,
        \quad \norm{\Delta\vQ} \leq \bigO{\eps},
    \end{split}
\end{equation}
with $\vQtil$ exactly satisfying $\vQtil^T \vQtil = I$, because from~\eqref{eq:epsqr-1} we have
\begin{equation}
    \begin{split}
        & \vGbar + \Delta \vGtil + \Delta\vQ \Rbar = \vQbar \Rbar, \\
        & \norm{\Rbar} \leq (1 + \bigO{\eps}) \norm{\vGbar} + \bigO{\eps} \norm{\Rbar} \Rightarrow \norm{\Rbar} \leq \frac{1 + \bigO{\eps}}{1 - \bigO{\eps}} \norm{\vGbar}.
    \end{split}
\end{equation}
Furthermore, $\HouseQR$ and $\GivensQR$ satisfy
\begin{equation}
    \begin{split}
        & \vGbar + \Delta \vG = \vQbar \Rbar, \quad \norm{\Delta \vG} \leq  \bigO{\eps} \norm{\vGbar}, \\
        & \norm{I - \vQbar^T \vQbar} \leq \bigO{\eps}.
    \end{split}
\end{equation}

From~\cite[Theorems~19.4, 19.10, 19.13]{Hig02} and the discussion above, we show example values of $\epsqr$ and $\epsQ$ for different methods in Table~\ref{tab:muscles}.
\begin{table}[!tbp]
    \centering
    \begin{tabular}{ccc} \hline
        Methods     & $\epsqr$      & $\epsQ$ \\ \hline
        $\HouseQR$  & $\bigO{\eps}$ & $\bigO{\eps}$ \\
        $\GivensQR$ & $\bigO{\eps}$ & $\bigO{\eps}$ \\
        $\MGS$      & $\bigO{\eps}$ & $\bigO{\eps} \kappa$ \\
        $\CholQR$   & $\bigO{\eps}$ & $\bigO{\eps} \kappa^2$ \\ \hline
    \end{tabular}
    \caption{Values of $\epsqr$ and $\epsQ$ for common \IOnoarg choices. \label{tab:muscles}}
\end{table}

In the remainder of this section, we abstract the induction process appearing in a typical BGS analysis through a series of lemmas.  Given $\bQQbarprev$ satisfying
\begin{equation} \label{eq:epsQkp}
    \norm{I - \bQQbarprev^T \bQQbarprev} \leq \epsQone,
\end{equation}
for some $0 < \epsQone \leq 1$, it follows that
\begin{equation} \label{eq:Qkp-2norm}
    \norm{\bQQbarprev} \leq 1 + \epsQone \leq \bigO{1}.
\end{equation}
We aim to derive a $\epsQtwo$ in relation to $\epsQone$, $\epsproj$, $\epsqr$, and $\epsQ$ such that 
\[
    \norm{I - \bQQbarnew^T \bQQbarnew} \leq \epsQtwo,
\]
and then bound the LOO of the final result of the iteration by induction.  The following lemma addresses the impact of $\epsQone$, $\epsproj$, $\epsqr$, and $\epsQ$ on $\epsQtwo$.

\begin{lemma} \label{lem:epsQk-relation}
    Assume that $\vGbar$, $\vGtil$, $\vQbar$, $\Rbar$, and $\bQQbarprev$ satisfy~\eqref{eq:epsproj}, \eqref{eq:epsqr}, and~\eqref{eq:epsQkp}, and that 
    \[
        \frac{(1 + \epsqr) \epsproj}{\sigmin(\vGtil)} + \epsqr \kappa(\vGtil) < 1
    \]
    is satisfied. Furthermore, assume that $\Rbar$ is nonsingular.  Then 
    \begin{equation}
        \norm{\Rbar^\inv} \leq \frac{1 + \epsQ}{\sigmin(\vGtil) -(1 + \epsqr) \epsproj- \epsqr \norm{\vGtil}}
    \end{equation}
    and $\bQQbarnew = [\bQQbarprev, \vQbar]$ satisfies
    \begin{equation} \label{eq:lem:epsQk-relation:epsQk-relation}
        \begin{split}
            \norm{I - \bQQbarnew^T \bQQbarnew} \\
            \leq{} & \epsQone + 2 \norm{\bQQbarprev^T \vGtil \Rbar^\inv} + \epsQ\\
            & + \frac{2(1 + \epsQone)(1 + \epsQ) \left(\epsqr \kappa(\vGtil) + (1 + \epsqr) \epsproj/\sigmin(\vGtil) \right)}{1 - (1 + \epsqr) \epsproj/\sigmin(\vGtil) - \epsqr \kappa(\vGtil)}.
        \end{split}
    \end{equation}
\end{lemma}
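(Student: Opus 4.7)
The plan is to handle the two claims in order. For the bound on $\norm{\Rbar^\inv}$, I would begin from the intraortho error relation $\vQbar \Rbar = \vGbar + \Delta\vG$ in~\eqref{eq:epsqr} together with the projection identity $\vGbar = \vGtil + \vEproj$ from~\eqref{eq:epsproj}, combine them, and apply the Weyl-type perturbation bound $\sigmin(A+B) \geq \sigmin(A) - \norm{B}$ to obtain
\begin{equation*}
    \sigmin(\vQbar \Rbar) \geq \sigmin(\vGtil) - \norm{\vEproj} - \norm{\Delta\vG} \geq \sigmin(\vGtil) - (1+\epsqr)\epsproj - \epsqr \norm{\vGtil},
\end{equation*}
where the second inequality uses $\norm{\Delta\vG} \leq \epsqr \norm{\vGbar} \leq \epsqr(\norm{\vGtil} + \epsproj)$. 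The smallness hypothesis is exactly what guarantees this quantity is strictly positive. Next, $\norm{I - \vQbar^T \vQbar} \leq \epsQ$ gives $\norm{\vQbar} \leq \sqrt{1 + \epsQ} \leq 1 + \epsQ$, and the standard submultiplicativity estimate $\sigmin(\vQbar\Rbar) \leq \norm{\vQbar} \sigmin(\Rbar)$ yields the first claim.

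For the loss-of-orthogonality bound, I would exploit the block structure
\begin{equation*}
    I - \bQQbarnew^T \bQQbarnew = \begin{bmatrix} I - \bQQbarprev^T \bQQbarprev & -\bQQbarprev^T \vQbar \\ -\vQbar^T \bQQbarprev & I - \vQbar^T \vQbar \end{bmatrix}
\end{equation*}
and apply the triangle inequality block-wise to obtain the coarse upper bound
\begin{equation*}
    \norm{I - \bQQbarnew^T \bQQbarnew} \leq \epsQone + 2 \norm{\bQQbarprev^T \vQbar} + \epsQ,
\end{equation*}
isolating the cross term $\bQQbarprev^T \vQbar$ as the only nontrivial quantity to control. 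Using $\vQbar = (\vGtil + \vEproj + \Delta\vG) \Rbar^\inv$, I split
\begin{equation*}
    \bQQbarprev^T \vQbar = \bQQbarprev^T \vGtil \Rbar^\inv + \bQQbarprev^T (\vEproj + \Delta\vG) \Rbar^\inv,
\end{equation*}
and bound the second term with $\norm{\bQQbarprev} \leq 1 + \epsQone$, $\norm{\vEproj} + \norm{\Delta\vG} \leq (1+\epsqr)\epsproj + \epsqr \norm{\vGtil}$, and the bound on $\norm{\Rbar^\inv}$ from the first step. Dividing numerator and denominator by $\sigmin(\vGtil)$ and recognizing $\norm{\vGtil}/\sigmin(\vGtil) = \kappa(\vGtil)$ then reproduces the fractional expression in~\eqref{eq:lem:epsQk-relation:epsQk-relation}.

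The first term $\bQQbarprev^T \vGtil \Rbar^\inv$ is deliberately left un-expanded because it is the algorithm-dependent quantity capturing how nearly orthogonal the projected block $\vGtil$ is to the previously computed basis; its treatment will differ across \BCGSA, \BCGSIROA, and the low-sync descendants. The main technical hazard is the perturbation chain in the first step: I must be careful that the successive applications of the triangle inequality on $\sigmin(\vGtil + \vEproj + \Delta\vG)$ and on $\norm{\vGbar}$ are made consistently so that the denominator in $\norm{\Rbar^\inv}$ is exactly $\sigmin(\vGtil) - (1+\epsqr)\epsproj - \epsqr\norm{\vGtil}$ rather than something coarser, as this factor propagates into the denominator of the final LOO bound. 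Once $\norm{\Rbar^\inv}$ is secured, the rest is a bookkeeping exercise involving only the triangle inequality and the defining relations~\eqref{eq:epsproj}--\eqref{eq:epsqr}.
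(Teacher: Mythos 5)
Your proposal is correct and follows essentially the same route as the paper: the bound on $\norm{\Rbar^\inv}$ via the Weyl perturbation of $\sigmin$ combined with $\sigmin(\vQbar\Rbar)\leq\norm{\vQbar}\sigmin(\Rbar)$ is exactly the content of the paper's Lemma~\ref{lem:norm-R}, which you have simply inlined, and the block triangle inequality plus the splitting $\vQbar=(\vGtil+\vEproj+\Delta\vG)\Rbar^\inv$ is the paper's argument verbatim. The final normalization by $\sigmin(\vGtil)$ to produce the $\kappa(\vGtil)$-form of the fraction also matches.
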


Lemma~\ref{lem:epsQk-relation} illustrates how \texttt{Proj} and \texttt{QR} influence the LOO of $\bQQbarnew$.  It also implies that we only need to estimate $\norm{\bQQbarprev^T \vGtil \Rbar^\inv}$, $\epsproj$, $\epsqr$, and $\epsQ$ to assess the LOO of $\bQQbarnew$.

Before proving Lemma~\ref{lem:epsQk-relation}, we first give the following lemma to be used in its proof.

\begin{lemma} \label{lem:norm-R}
    Assume that for $\vW$, $\vU$, $\Delta \vW \in \spR^{m \times s}$ and $R \in \spR^{s \times s}$ nonsingular,
    \[
    \vW + \Delta \vW = \vU R.
    \]
    Then
    \begin{align}
        \norm{R^\inv} \leq \frac{\norm{\vU}}{\sigmin(\vW) - \norm{\Delta \vW}}.
    \end{align}
\end{lemma}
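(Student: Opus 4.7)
The plan is to bound $\sigmin(R)$ from below and then invert to get the claim, using two standard singular-value inequalities applied to the identity $\vU R = \vW + \Delta \vW$.

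First, I would apply Weyl's perturbation inequality for singular values, $\sigmin(A+B) \geq \sigmin(A) - \norm{B}$, with $A = \vW$ and $B = \Delta \vW$, to obtain
\[
\sigmin(\vU R) \geq \sigmin(\vW) - \norm{\Delta \vW}.
\]
This step implicitly uses $\sigmin(\vW) > \norm{\Delta \vW}$, which is exactly the condition under which the stated upper bound on $\norm{R^\inv}$ is finite and positive; it is the setting in which Lemma~\ref{lem:norm-R} is actually invoked in the proof of Lemma~\ref{lem:epsQk-relation}.

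Second, I would relate $\sigmin(\vU R)$ to $\sigmin(R)$ by submultiplicativity. For any unit vector $x \in \spR^s$, $\norm{\vU R x} \leq \norm{\vU}\,\norm{R x}$, so taking the minimum over $\norm{x}=1$ on both sides yields
\[
\sigmin(\vU R) = \min_{\norm{x}=1} \norm{\vU R x} \leq \norm{\vU}\, \min_{\norm{x}=1} \norm{R x} = \norm{\vU}\, \sigmin(R).
\]

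Combining the two inequalities gives $\sigmin(R) \geq (\sigmin(\vW) - \norm{\Delta \vW})/\norm{\vU}$, and since $R$ is assumed nonsingular, $\norm{R^\inv} = 1/\sigmin(R)$, which yields the stated bound. There is no serious obstacle: the argument amounts to a two-line calculation built from textbook singular-value inequalities. The only mild subtlety is the tacit positivity of the denominator, which is precisely the regime in which the lemma is meaningful and in which it is later used.
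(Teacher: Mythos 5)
Your proof is correct and follows essentially the same route as the paper's: the perturbation bound $\sigmin(\vU R) \geq \sigmin(\vW) - \norm{\Delta\vW}$ (the paper cites \cite[Corollary~8.6.2]{GolV13}, which is Weyl's inequality) combined with $\sigmin(\vU R) \leq \norm{\vU}\,\sigmin(R)$ via the same minimization-over-unit-vectors argument. Your remark on the tacit positivity of the denominator is a fair observation that the paper leaves implicit as well.
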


\begin{proof}
    By the perturbation theory of singular values~\cite[Corollary~8.6.2]{GolV13}, we have
    \begin{equation*}
        \sigmin(\vW) - \norm{\Delta \vW} \leq \sigmin(\vU R).
    \end{equation*}
    Together with
    \[
        \sigmin^2(\vU R)= \min_{\vy} \left(\frac{\norm{\vU R \vy}^2}{\norm{\vy}^2} \right)
        \leq \min_{\vy} \left(\frac{\norm{\vU}^2 \norm{R \vy}^2}{\norm{\vy}^2} \right)
        \leq \norm{\vU}^2 \sigmin^2(R),
    \]
    we can bound $\norm{R^\inv}$ as
    \[
        \norm{R^\inv}
        = \frac{1}{\sigmin(R)}
        \leq \frac{\norm{\vU}}{\sigmin(\vW) - \norm{\Delta \vW}}.
    \]
\end{proof}

Note that we do not assume that $\vU$ in Lemma~\ref{lem:norm-R} is orthogonal.

\begin{proof}[Proof of Lemma~\ref{lem:epsQk-relation}]
    Notice that
    \[
    \norm{I - \bQQbarnew^T \bQQbarnew}
    \leq \norm{I - \bQQbarprev^T \bQQbarprev}
    + 2 \norm{\bQQbarprev^T \vQbar}
    + \norm{I - \vQbar^T \vQbar}.
    \]
    Together with~\eqref{eq:epsproj}, \eqref{eq:epsqr}, \eqref{eq:epsQkp} and~\eqref{eq:Qkp-2norm}, we obtain
    \begin{equation} \label{eq:norm-Qk-1TQk}
        \begin{split}
            \norm{\bQQbarprev^T \vQbar}
            & = \norm{\bQQbarprev^T (\vGtil + \vEproj + \Delta \vG) \Rbar^\inv} \\
            & \leq \norm{\bQQbarprev^T \vGtil \Rbar^\inv} + (1 + \epsQone) \epsproj\norm{\Rbar^\inv}\\
            & \quad + (1 + \epsQone) \epsqr \norm{\vGbar} \norm{\Rbar^\inv},
        \end{split}
    \end{equation}
    and furthermore,
    \begin{equation*}
        \begin{split}
            \norm{I - \bQQbarnew^T \bQQbarnew}
            & \leq \epsQone + \epsQ + 2 \norm{\bQQbarprev^T \vGtil \Rbar^\inv} \\
            & \quad + 2(1 + \epsQone) \left(\epsproj + \epsqr \norm{\vGbar} \right) \norm{\Rbar^\inv}.
        \end{split}
    \end{equation*}

    Next, we estimate $\norm{\Rbar^\inv}$, which satisfies
    \begin{equation*}
        \vGtil + \vEproj + \Delta\vG = \vQbar \Rbar.
    \end{equation*}
    By Lemma~\ref{lem:norm-R}, $\norm{\Rbar^\inv}$ can be bounded as
    \begin{equation} \label{eq:norm-Rbarinv}
        \begin{split}
            \norm{\Rbar^\inv}
            & \leq \frac{\norm{\vQbar}}{\sigmin(\vGtil) -(\norm{\vEproj} + \norm{\Delta \vG})} \\
            & \leq \frac{1 + \epsQ}{\sigmin(\vGtil) -(1 + \epsqr) \epsproj- \epsqr \norm{\vGtil}}.
        \end{split}
    \end{equation}
    The conclusion follows because
    \begin{equation*}
        \begin{split}
            \norm{\vGbar} \norm{\Rbar^\inv}
            & \leq \frac{(1 + \epsQ) \left(\norm{\vGtil} + \epsproj\right)}{\sigmin(\vGtil) -(1 + \epsqr) \epsproj- \epsqr \norm{\vGtil}} \\
            & \leq \frac{(1 + \epsQ) \left(\kappa(\vGtil)+ \epsproj/\sigmin(\vGtil) \right)}{1 -(1 + \epsqr) \epsproj/\sigmin(\vGtil) - \epsqr \kappa(\vGtil)}.
        \end{split}
    \end{equation*}
\end{proof}

\subsection{Loss of orthogonality of \texttt{BCGS-A}} \label{sec:BCGSA}
According to Algorithm~\ref{alg:BCGSA}, the projection and intraortho stages can be written respectively as
\begin{equation}
    \vG = \Proj{\vX, \bQQ} := \vX - \bQQ\bQQ^T\vX
    \quad\text{and}\quad \QR{\vG} := \IO{\vG}. 
\end{equation}
Specifically, for the $k$th inner loop of Algorithm~\ref{alg:BCGSA},
\begin{align}
    & \vV_k = \Proj{\vX_k, \bQQ_{k-1}} = \vX_k - \bQQ_{k-1} \bQQ_{k-1}^T \vX_k, \mbox{ and} \label{eq:one-proj} \\ 
    & [\vQ_k, R_{kk}]= \QR{\vV_k} = \IO{\vV_k}. \label{eq:bcgs-orth}
\end{align}
Then we define
\begin{equation} \label{eq:definition-Vtilk}
    \vVtil_k = (I - \bQQbar_{k-1} \bQQbar_{k-1}^T) \vX_k,
\end{equation}
where $\bQQbar_{k-1}$ satisfies
\begin{equation} \label{eq:def-Vtilk-epsQkp}
        \norm{I - \bQQbar_{k-1}^T\bQQbar_{k-1}}\leq \epsQkp.
\end{equation}
Furthermore, $\vVbar_k$ denotes the computed result of $\vVtil_k$.

To use Lemma~\ref{lem:epsQk-relation} to analyze the $k$th inner loop of Algorithm~\ref{alg:BCGSA}, we first estimate $\sigmin(\vVtil_k)$ and $\kappa(\vVtil_k)$ in the following lemma, and then analyze the specific $\epsproj := \epsprojbcgsa_k$ satisfying $\norm{\vVbar_k - \vVtil_k}\leq \epsprojbcgsa_k$, as well as the quantity $\norm{\bQQbar_{k-1}^T \vVtil_k \Rbar_{kk}^\inv}$, which are related only to the projection stage in Lemma~\ref{lem:quantities-one-projection}.

\begin{lemma} \label{lem:norm-Wk0}
    Let $\vVtil_k$ and $\bQQbar_{k-1}$ satisfy~\eqref{eq:definition-Vtilk} and~\eqref{eq:def-Vtilk-epsQkp}.
    Assume that
    \begin{equation}\label{eq:lem-norm-Wk0:assump}
        \bXX_{k-1} + \Delta \bXX_{k-1} = \bQQbar_{k-1} \RRbar_{k-1},
        \quad \norm{\Delta \bXX_{k-1}} \leq \epsXkp \norm{\bXX_{k-1}},
    \end{equation}
    with $\epsXkp \kappa(\bXX_k)<1$ and $\RRbar_{k-1}$ nonsingular.
    Then
    \begin{align}
        & \norm{\vVtil_k} \leq (1 + \epsQkp) \norm{\vX_k}, \label{lem:norm-Wk0:normVtil} \\
        & \sigmin(\vVtil_k) \geq \sigmin(\bXX_k) - \epsXkp \norm{\bXX_{k-1}}, \mbox{ and}\label{lem:norm-Wk0:sigminVtil} \\
        & \kappa(\vVtil_k) \leq \frac{(1 + \epsQkp) \kappa(\bXX_k)}{1 - \epsXkp \kappa(\bXX_k)}. \label{lem:norm-Wk0:condVtil}
    \end{align}
\end{lemma}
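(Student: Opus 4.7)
The plan is to prove the three inequalities in order, noting that \eqref{lem:norm-Wk0:condVtil} will follow from \eqref{lem:norm-Wk0:normVtil} and \eqref{lem:norm-Wk0:sigminVtil} by the definition of the condition number and the trivial estimates $\norm{\vX_k}\leq\norm{\bXX_k}$ and $\norm{\bXX_{k-1}}\leq\norm{\bXX_k}$, with the assumption $\epsXkp\kappa(\bXX_k)<1$ ensuring that the denominator in (c) is positive.

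For \eqref{lem:norm-Wk0:normVtil}, I would start from $\vVtil_k=(I_m-\bQQbar_{k-1}\bQQbar_{k-1}^T)\vX_k$ and apply the mixed bound $\norm{AB}\leq\normF{A}\norm{B}$. It then suffices to estimate $\normF{I_m-\bQQbar_{k-1}\bQQbar_{k-1}^T}$. Using the (economic) SVD of $\bQQbar_{k-1}$, the matrix $\bQQbar_{k-1}\bQQbar_{k-1}^T$ has $(k-1)s$ nonzero eigenvalues, each of the form $\sigma_i(\bQQbar_{k-1})^2\in[1-\epsQkp,1+\epsQkp]$ by \eqref{eq:def-Vtilk-epsQkp}, together with $m-(k-1)s$ zero eigenvalues. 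Squaring the deviations from $1$ and summing yields
\[
    \normF{I_m-\bQQbar_{k-1}\bQQbar_{k-1}^T}^2 \leq (m-(k-1)s) + (k-1)s\,\epsQkp^2 \leq m(1+\epsQkp)^2,
\]
from which \eqref{lem:norm-Wk0:normVtil} follows after taking square roots.

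The bulk of the work lies in \eqref{lem:norm-Wk0:sigminVtil}. The central observation is the block factorization
\[
    \bmat{\bQQbar_{k-1} & \vVtil_k}\bmat{\RRbar_{k-1} & \bQQbar_{k-1}^T\vX_k \\ 0 & I_s} = \bmat{\bXX_{k-1}+\Delta\bXX_{k-1} & \vX_k} = \bXX_k + \bmat{\Delta\bXX_{k-1} & 0},
\]
obtained by combining hypothesis~\eqref{eq:lem-norm-Wk0:assump} with the defining decomposition $\vX_k=\bQQbar_{k-1}(\bQQbar_{k-1}^T\vX_k)+\vVtil_k$. Since $\RRbar_{k-1}$ is nonsingular by hypothesis, the triangular factor is invertible; right-multiplying by its inverse and then evaluating both sides on a vector of the form $\bmat{\vZero \\ \vu}$ with unit $\vu\in\spR^s$ yields
\[
    \vVtil_k \vu = (\bXX_k + \bmat{\Delta\bXX_{k-1} & 0}) \bmat{-\RRbar_{k-1}^{-1}\bQQbar_{k-1}^T\vX_k\vu \\ \vu}.
\]
Since the vector on the right has norm at least $\norm{\vu}=1$, Weyl's inequality applied to the perturbation $\bmat{\Delta\bXX_{k-1} & 0}$ of $\bXX_k$ gives $\norm{\vVtil_k\vu}\geq \sigmin(\bXX_k)-\norm{\Delta\bXX_{k-1}}\geq \sigmin(\bXX_k)-\epsXkp\norm{\bXX_{k-1}}$ for every unit $\vu$, and minimizing over $\vu$ yields \eqref{lem:norm-Wk0:sigminVtil}.

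The hardest step will be spotting the block factorization above: the intuition is that adjoining $\bQQbar_{k-1}$ to $\vVtil_k$ exposes a copy of $\bXX_k$ (up to a controllable perturbation) through an invertible upper-triangular column operation, which transfers the Weyl lower bound on $\sigmin(\bXX_k)$ directly onto $\sigmin(\vVtil_k)$ by picking out the last $s$ columns. Once this identity is in place, (a) is a routine Frobenius-norm computation and (c) is an immediate combination of the first two bounds with the hypothesis $\epsXkp\kappa(\bXX_k)<1$.
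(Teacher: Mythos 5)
Your proposal is correct and follows essentially the same route as the paper: the bound on $\sigmin(\vVtil_k)$ via the block factorization exposing $\bmat{\bXX_{k-1}+\Delta\bXX_{k-1} & \vX_k}$ plus Weyl's inequality is exactly the argument the paper invokes by citation (and spells out explicitly in the analogous Lemma~\ref{lem:norm-Wk1}), and your SVD-based estimate $\normF{I-\bQQbar_{k-1}\bQQbar_{k-1}^T}\leq\sqrt{m}\,(1+\epsQkp)$ is the same Frobenius-norm bound the paper obtains via traces in~\eqref{eq:lem:norm-I-QQT}. The remaining steps (combining the two bounds with $\norm{\vX_k}\leq\norm{\bXX_k}$ and $\norm{\bXX_{k-1}}\leq\norm{\bXX_k}$ to get the condition-number estimate) match the paper's~\eqref{eq:kappa-tildeW-one-proj}.
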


\begin{proof}
    Recalling the definition~\eqref{eq:definition-Vtilk} of $\vVtil_k$ and following~\cite[Equations (39)]{CarLMetal24a}, it is easy to verify \eqref{lem:norm-Wk0:sigminVtil}. Note that the two symmetric matrices $\bQQbar_{k-1} \bQQbar_{k-1}^T$ and $\bQQbar_{k-1}^T \bQQbar_{k-1}$ have the same nonzero eigenvalues. From the assumption~\eqref{eq:def-Vtilk-epsQkp},
    \begin{equation} \label{eq:lem:norm-I-QQT}
        \begin{split}
            \norm{I - \bQQbar_{k-1} \bQQbar_{k-1}^T} & = \max\{\norm{I - \bQQbar_{k-1}^T \bQQbar_{k-1}}, 1\}
            \leq \max\{\epsQkp, 1\} \leq 1 + \epsQkp,
        \end{split}
    \end{equation}
    which, combined with the definition~\eqref{eq:definition-Vtilk} of \(\vVtil_k\), gives \eqref{lem:norm-Wk0:normVtil}.  Combining \eqref{lem:norm-Wk0:sigminVtil} with~\eqref{eq:lem:norm-I-QQT}, we have
    \begin{equation} \label{eq:kappa-tildeW-one-proj}
        \kappa(\vVtil_k) = \frac{\norm{\vVtil_k}}{\sigmin(\vVtil_k)}
        \leq \frac{(1 + \epsQkp) \norm{\vX_k}}{\sigmin(\bXX_k) - \epsXkp \norm{\bXX_{k-1}}}
        \leq \frac{(1 + \epsQkp) \kappa(\bXX_k)}{1 - \epsXkp \kappa(\bXX_k)},
    \end{equation}
    giving \eqref{lem:norm-Wk0:condVtil}.
\end{proof}

\begin{lemma} \label{lem:quantities-one-projection}
    Let $\vVtil_k$ and $\bQQbar_{k-1}$ satisfy~\eqref{eq:definition-Vtilk} and~\eqref{eq:def-Vtilk-epsQkp}.
    Then for the projection stage~\eqref{eq:one-proj} computed by lines~\ref{line:bcgs-proj-begin}-\ref{line:bcgs-proj-end} in Algorithm~\ref{alg:BCGSA}, it holds that

        \begin{align}
            & \norm{\vVbar_k - \vVtil_k} \leq \epsprojbcgsa_k \leq \bigO{\eps} \norm{\vX_k},\label{lem:quantities-one-projection:normVbarVtil}\\
            \quad
            &\norm{\bQQbar_{k-1}^T \vVtil_k \Rbar_{kk}^\inv}
            \leq (1 + \epsQkp) \epsQkp \norm{\vX_k} \norm{\Rbar_{kk}^\inv}.\label{lem:quantities-one-projection:normQVR}
        \end{align}
\end{lemma}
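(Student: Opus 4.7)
The plan is to establish the two estimates separately using standard floating-point error analysis for the projection stage and the algebraic identity satisfied by the exact projector $I - \bQQbar_{k-1} \bQQbar_{k-1}^T$.

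For \eqref{lem:quantities-one-projection:normVbarVtil}, I will unfold the two floating-point operations on lines~\ref{line:bcgs-proj-begin}--\ref{line:bcgs-proj-end} of Algorithm~\ref{alg:BCGSA}. First, the computed Gram-type product satisfies $\bar{\SS}_{1:k-1,k} = \bQQbar_{k-1}^T \vX_k + E_1$ with $\norm{E_1} \leq \bigO{\eps} \norm{\bQQbar_{k-1}} \norm{\vX_k}$; second, the computed residual satisfies $\vVbar_k = \vX_k - \bQQbar_{k-1} \bar{\SS}_{1:k-1,k} + E_2$ with $\norm{E_2} \leq \bigO{\eps} (\norm{\vX_k} + \norm{\bQQbar_{k-1}} \norm{\bar{\SS}_{1:k-1,k}})$. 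Subtracting the defining equation~\eqref{eq:definition-Vtilk} of $\vVtil_k$ gives
\begin{equation*}
    \vVbar_k - \vVtil_k = -\bQQbar_{k-1} E_1 + E_2.
\end{equation*}
Using the bound $\norm{\bQQbar_{k-1}} \leq 1 + \epsQkp = \bigO{1}$ from~\eqref{eq:Qkp-2norm}, together with $\norm{\bar{\SS}_{1:k-1,k}} \leq \norm{\bQQbar_{k-1}}\norm{\vX_k} + \norm{E_1}$, yields the desired bound $\norm{\vVbar_k - \vVtil_k} \leq \bigO{\eps} \norm{\vX_k}$, which defines $\epsprojbcgsa_k$.

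For \eqref{lem:quantities-one-projection:normQVR}, the key observation is the exact algebraic identity
\begin{equation*}
    \bQQbar_{k-1}^T \vVtil_k = \bQQbar_{k-1}^T (I - \bQQbar_{k-1} \bQQbar_{k-1}^T) \vX_k = (I - \bQQbar_{k-1}^T \bQQbar_{k-1}) \bQQbar_{k-1}^T \vX_k,
\end{equation*}
which holds regardless of whether $\bQQbar_{k-1}$ is actually orthogonal. Taking norms and applying~\eqref{eq:def-Vtilk-epsQkp} together with~\eqref{eq:Qkp-2norm} gives $\norm{\bQQbar_{k-1}^T \vVtil_k} \leq \epsQkp (1 + \epsQkp) \norm{\vX_k}$. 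Multiplying by $\norm{\Rbar_{kk}^\inv}$ yields the stated bound.

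Neither part poses a serious obstacle; the analysis is a direct application of standard matrix multiplication error bounds (as in \cite{Hig02}) combined with the near-idempotency of $\bQQbar_{k-1} \bQQbar_{k-1}^T$. The mildly delicate part is the bookkeeping in the first estimate, where one must be careful that $\norm{\bar{\SS}_{1:k-1,k}}$ is controlled in terms of $\norm{\vX_k}$ rather than left as a free quantity; using $\norm{\bQQbar_{k-1}} \leq \bigO{1}$ this reduces cleanly to a multiple of $\norm{\vX_k}$ and everything collapses into the $\bigO{\eps}\norm{\vX_k}$ constant hidden in $\epsprojbcgsa_k$.
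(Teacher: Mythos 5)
Your proposal is correct and follows essentially the same route as the paper: the first bound comes from standard rounding-error analysis of the two floating-point operations in the projection stage (the paper states this slightly more tersely as its equations for $\RRbar_{1:k-1,k}$ and $\vVbar_k$), and the second bound uses exactly the same commutation identity $\bQQbar_{k-1}^T(I - \bQQbar_{k-1}\bQQbar_{k-1}^T) = (I - \bQQbar_{k-1}^T\bQQbar_{k-1})\bQQbar_{k-1}^T$ followed by submultiplicativity and \eqref{eq:def-Vtilk-epsQkp}. No gaps.
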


\begin{proof}
    By~\eqref{eq:Qkp-2norm} it follows that
    \begin{equation} \label{eq:error-QtX}
        \RRbar_{1:k-1,k} = \bQQbar_{k-1}^T \vX_k+ \Delta\RRbar_{1:k-1,k},
        \quad \norm{\Delta\RRbar_{1:k-1,k}}
        \leq \bigO{\eps} \norm{\vX_k}.
    \end{equation}
    Furthermore, we have
    \begin{equation}\label{eq:Vbar_k}
        \vVbar_k = \vX_k - \bQQbar_{k-1}\bQQbar_{k-1}^T\vX_k + \Delta \vVtil_k = \vVtil_k + \Delta \vVtil_k,
        \quad \norm{\Delta \vVtil_k}\leq \bigO{\eps} \norm{\vX_k},
    \end{equation}
    which gives \eqref{lem:quantities-one-projection:normVbarVtil}.
    Then by the assumption~\eqref{eq:def-Vtilk-epsQkp}, we find the bound \eqref{lem:quantities-one-projection:normQVR} as follows:
    \begin{align*}
        \norm{\bQQbar_{k-1}^T \vVtil_k \Rbar_{kk}^\inv}
        & = \norm{\bQQbar_{k-1}^T \left(I - \bQQbar_{k-1} \bQQbar_{k-1}^T \right) \vX_k \Rbar_{kk}^\inv} \\
        & \leq \norm{\left(I - \bQQbar_{k-1}^T \bQQbar_{k-1} \right) \bQQbar_{k-1}^T \vX_k \Rbar_{kk}^\inv} \\
        & \leq \norm{I - \bQQbar_{k-1}^T \bQQbar_{k-1}}
        \norm{\bQQbar_{k-1}} \norm{\vX_k} \norm{\Rbar_{kk}^\inv} \\
        & \leq(1 + \epsQkp) \epsQkp \norm{\vX_k} \norm{\Rbar_{kk}^\inv}.
    \end{align*}
\end{proof}

Lemma~\ref{lem:epsQk-relation} indicates that we only need to estimate $\norm{\bQQbar_{k-1}^T \vVtil_k \Rbar_{kk}^\inv}$, $\epsprojbcgsa_k$, $\epsqr$, $\sigmin(\vVtil_k)$, and $\kappa(\vVtil_k)$ to evaluate the LOO of $\bQQbar_k$. Already, $\norm{\bQQbar_{k-1}^T \vVtil_k \Rbar_{kk}^\inv}$ and $\epsprojbcgsa_k$ have been determined in Lemma~\ref{lem:quantities-one-projection} together with the estimation of $\norm{\Rbar_{kk}^\inv}$ shown in~\eqref{eq:norm-Rbarinv}, while $\sigmin(\vVtil_k)$ and $\kappa(\vVtil_k)$ were addressed in Lemma~\ref{lem:norm-Wk0}. It is important to note that $\epsqr$ is dependent on $\IOonenoarg$. Therefore, the subsequent lemma utilizes both Lemma~\ref{lem:quantities-one-projection} and Lemma~\ref{lem:norm-Wk0} to describe the behavior of the $k$th inner loop of $\BCGSA$, guided by Lemma~\ref{lem:epsQk-relation}.

\begin{lemma} \label{lem:bcgs-kinnerloop}
    Assume that $\bQQbar_{k-1}$ satisfies~\eqref{eq:def-Vtilk-epsQkp}, and that
    \[
        \bXX_{k-1} + \Delta \bXX_{k-1} = \bQQbar_{k-1} \RRbar_{k-1},
        \quad \norm{\Delta \bXX_{k-1}}
        \leq \epsXkp \norm{\bXX_{k-1}}.
    \]
    Suppose further that for all $\vX \in \spR^{m \times s}$ with $\kappa(\vX) \leq \kappa(\bXX)$, the following hold for $[\vQbar, \Rbar] = \IOA{\vX}$:
    \begin{equation*}
        \begin{split}
            & \vX + \Delta \vX = \vQbar \Rbar,
            \quad \norm{\Delta \vX} \leq \bigO{\eps} \norm{\vX}, \\
            & \norm{I - \vQbar^T \vQbar} \leq \bigO{\eps} \kappa^{\constQA}(\vX),
        \end{split}
    \end{equation*}
    and for $[\vQbar, \Rbar] = \IO{\vX}$:
    \begin{equation*}
        \begin{split}
            & \vX + \Delta \vX = \vQbar \Rbar,
            \quad \norm{\Delta \vX} \leq \bigO{\eps} \norm{\vX}, \\
            & \norm{I - \vQbar^T \vQbar} \leq \bigO{\eps} \kappa^{\constQ}(\vX).
        \end{split}
    \end{equation*}
    Assume as well that $\bigO{\eps} \kappa(\bXX_k) \leq \frac{1}{2}$ and $\epsXkp \kappa(\bXX_k) < 1$. Then for the $k$th inner loop of Algorithm~\ref{alg:BCGSA} with any $k \geq 2$,
    \begin{equation} \label{eq:epsQk-relation}
        \norm{I - \bQQbar_k^T \bQQbar_k}
        \leq 
        \epsQkp + \frac{2 \epsQkp(1 + \epsQkp)(1 + \epsQ)+ \bigO{\eps}}{1 - \bigO{\eps} \kappa(\bXX_k)} \kappa(\bXX_k)+ \epsQ.
    \end{equation}
\end{lemma}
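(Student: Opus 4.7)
The plan is to instantiate Lemma~\ref{lem:epsQk-relation} for the specific projection and intraortho of Algorithm~\ref{alg:BCGSA} and feed in the bounds established in Lemmas~\ref{lem:norm-Wk0} and~\ref{lem:quantities-one-projection}. Concretely, at iteration $k$ we identify $\bQQbarprev = \bQQbar_{k-1}$, $\vGtil = \vVtil_k$, $\vGbar = \vVbar_k$, $\vQbar = \vQbar_k$, and $\Rbar = \Rbar_{kk}$. Then Lemma~\ref{lem:quantities-one-projection} yields $\epsproj = \epsprojbcgsa_k \leq \bigO{\eps}\norm{\vX_k}$ and the direct bound $\norm{\bQQbar_{k-1}^T \vVtil_k \Rbar_{kk}^\inv} \leq (1+\epsQkp)\epsQkp \norm{\vX_k} \norm{\Rbar_{kk}^\inv}$, while the intraortho assumption gives $\epsqr = \bigO{\eps}$ and $\epsQ \leq \bigO{\eps}\,\kappa^{\constQ}(\vVbar_k)$.

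Next I would translate each quantity depending on $\vVtil_k$ or $\vVbar_k$ into a quantity depending on $\bXX_k$, which is the form required in \eqref{eq:epsQk-relation}. For this, Lemma~\ref{lem:norm-Wk0} supplies $\sigmin(\vVtil_k) \geq \sigmin(\bXX_k) - \epsXkp \norm{\bXX_{k-1}}$ and $\kappa(\vVtil_k) \leq \bigO{1}\kappa(\bXX_k)/(1-\epsXkp\kappa(\bXX_k))$, and the fact that $\norm{\vX_k} \leq \norm{\bXX_k}$ lets me replace $\norm{\vX_k}/\sigmin(\vVtil_k)$ by a multiple of $\kappa(\bXX_k)/(1-\epsXkp\kappa(\bXX_k))$. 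The $\norm{\Rbar_{kk}^\inv}$ factor arising in $\norm{\bQQbar_{k-1}^T \vVtil_k \Rbar_{kk}^\inv}$ is controlled exactly as in the proof of Lemma~\ref{lem:epsQk-relation}: Lemma~\ref{lem:norm-R} applied to $\vVbar_k + \Delta\vG = \vQbar_k \Rbar_{kk}$ gives $\norm{\Rbar_{kk}^\inv} \leq (1+\epsQ)/(\sigmin(\vVtil_k)-(1+\epsqr)\epsprojbcgsa_k - \epsqr\norm{\vVtil_k})$, and plugging in the estimates from Lemma~\ref{lem:norm-Wk0} folds this into the $\kappa(\bXX_k)/(1-\bigO{\eps}\kappa(\bXX_k))$ denominator. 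A subtle point is that $\epsQ$ is nominally a function of $\kappa(\vVbar_k)$, but since $\vVbar_k$ differs from $\vVtil_k$ by $\bigO{\eps}\norm{\vX_k}$, the hypothesis $\bigO{\eps}\kappa(\bXX_k) \leq 1/2$ is enough to bound $\kappa^{\constQ}(\vVbar_k)$ by a multiple of $\kappa^{\constQ}(\bXX_k)$; in fact the power $\constQ$ is absorbed because in the final target bound the only $\kappa$-dependent term outside the projection piece is $\epsQ$, which is additive.

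Finally, substituting everything into \eqref{eq:lem:epsQk-relation:epsQk-relation} of Lemma~\ref{lem:epsQk-relation} gives
\[
    \norm{I-\bQQbar_k^T\bQQbar_k}
    \leq \epsQkp + 2(1+\epsQkp)\epsQkp \norm{\vX_k}\norm{\Rbar_{kk}^\inv} + \epsQ
    + \frac{2(1+\epsQkp)(1+\epsQ)\bigl(\epsqr\kappa(\vVtil_k)+(1+\epsqr)\epsprojbcgsa_k/\sigmin(\vVtil_k)\bigr)}{1-(1+\epsqr)\epsprojbcgsa_k/\sigmin(\vVtil_k)-\epsqr\kappa(\vVtil_k)}.
\]
The second term contributes $2(1+\epsQkp)\epsQkp(1+\epsQ)\kappa(\bXX_k)/(1-\bigO{\eps}\kappa(\bXX_k))$ after the $\norm{\Rbar_{kk}^\inv}$ bound is inserted, while the fourth term contributes $\bigO{\eps}\kappa(\bXX_k)/(1-\bigO{\eps}\kappa(\bXX_k))$ since $\epsqr = \bigO{\eps}$ and $\epsprojbcgsa_k/\sigmin(\vVtil_k) \leq \bigO{\eps}\kappa(\bXX_k)/(1-\bigO{\eps}\kappa(\bXX_k))$. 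Combining these two terms under the common denominator $1-\bigO{\eps}\kappa(\bXX_k)$ yields exactly the stated bound \eqref{eq:epsQk-relation}.

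I expect the main obstacle to be the careful bookkeeping of $\kappa$ dependencies: specifically, making sure that $\epsQ$ from the intraortho applied to $\vVbar_k$ can be expressed cleanly in terms of $\kappa(\bXX_k)$ (using that $\vVbar_k - \vVtil_k = \bigO{\eps}\norm{\vX_k}$ and Lemma~\ref{lem:norm-Wk0}), and that the product of factors $(1+\epsQkp)$, $(1+\epsQ)$ and denominators of the form $1-\bigO{\eps}\kappa(\bXX_k)$ collapses into the single additive plus quotient form of \eqref{eq:epsQk-relation}. Everything else is a direct assembly of the three preceding lemmas.
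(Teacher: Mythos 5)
Your proposal is correct and follows exactly the route the paper intends: the paper gives no explicit proof of this lemma, stating only that it follows by feeding the quantities from Lemmas~\ref{lem:norm-Wk0} and~\ref{lem:quantities-one-projection} into the abstract bound of Lemma~\ref{lem:epsQk-relation}, which is precisely the assembly you carry out. Your identification of $\vGtil = \vVtil_k$, $\epsproj = \epsprojbcgsa_k$, $\epsqr = \bigO{\eps}$, the treatment of $\norm{\Rbar_{kk}^\inv}$ via Lemma~\ref{lem:norm-R}, and the collapse of the two $\kappa$-dependent terms into the single quotient of \eqref{eq:epsQk-relation} all match what the paper's framework prescribes.
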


By induction on $k$, we obtain the following result on the LOO of $\BCGSA$.

\begin{theorem} \label{thm:bcgs}
    Let $\bQQbar$ and $\RRbar$ denote the computed results of Algorithm~\ref{alg:BCGSA}. Assume that for all $\vX \in \spR^{m \times s}$ with $\kappa(\vX) \leq \kappa(\bXX)$, the following hold for $[\vQbar, \Rbar] = \IOA{\vX}$:
    \begin{equation*}
        \begin{split}
            & \vX + \Delta \vX = \vQbar \Rbar,
            \quad \norm{\Delta \vX} \leq \bigO{\eps} \norm{\vX}, \\
            & \norm{I - \vQbar^T \vQbar} \leq \bigO{\eps} \kappa^{\constQA}(\vX).
        \end{split}
    \end{equation*}
    Assume as well that for $[\vQbar, \Rbar] = \IO{\vX}$, it holds that
    \begin{equation*}
        \begin{split}
            & \vX + \Delta \vX = \vQbar \Rbar,
            \quad \norm{\Delta \vX} \leq \bigO{\eps} \norm{\vX}, \\
            & \norm{I - \vQbar^T \vQbar} \leq \bigO{\eps} \kappa^{\constQ}(\vX).
        \end{split}
    \end{equation*}
    If $\bigO{\eps} \kappa(\bXX) \leq \frac{1}{2}$,
    then
    \begin{equation} \label{eq:thm:bcgs:res}
         \bXX + \Delta\bXX = \bQQbar \RRbar, \quad \norm{\Delta\bXX} \leq \bigO{\eps} \norm{\bXX}
    \end{equation}
    and
    \begin{equation} \label{eq:thm:bcgs_loo}
        \norm{I - \bQQbar^T \bQQbar} \leq \bigO{\eps} \left(\kappa(\bXX) \right)^{p-2 + \max\{\constQA + 1, \constQ\}}.
    \end{equation}
\end{theorem}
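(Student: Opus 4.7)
The plan is to induct on the outer loop index $k$, combining Lemma~\ref{lem:bcgs-kinnerloop} to propagate the loss of orthogonality with a column-wise analysis to build up the residual. In the base case $k=1$, the single call $[\vQbar_1, R_{11}] = \IOA{\vX_1}$ gives $\vX_1 + \Delta\vX_1 = \vQbar_1 R_{11}$ with $\norm{\Delta\vX_1} \leq \bigO{\eps}\norm{\vX_1}$, and $\omega_1 := \norm{I - \vQbar_1^T\vQbar_1} \leq \bigO{\eps}\kappa^{\constQA}(\bXX)$ from the stability hypothesis on \IOAnoarg and the fact that $\kappa(\vX_1) \leq \kappa(\bXX)$.

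For the inductive step at $k \geq 2$, I would assume that both parts of the conclusion hold through step $k-1$, so that $\norm{I - \bQQbar_{k-1}^T\bQQbar_{k-1}} \leq \epsQkp$ and $\norm{\bXX_{k-1} - \bQQbar_{k-1}\RRbar_{k-1}} \leq \epsXkp\norm{\bXX_{k-1}}$ with $\epsXkp = \bigO{\eps}$. Since $\bigO{\eps}\kappa(\bXX) \leq 1/2$, the hypotheses of Lemma~\ref{lem:bcgs-kinnerloop} hold at step $k$; applying it, and noting that the \IOnoarg's $\epsQ$ is bounded via Lemma~\ref{lem:norm-Wk0} by $\bigO{\eps}\kappa^{\constQ}(\vVtil_k) \leq \bigO{\eps}\kappa^{\constQ}(\bXX_k)$, yields a recurrence of the form
\[
    \epsQk \leq \epsQkp \bigl(1 + C_1\,\kappa(\bXX_k)\bigr) + C_2\,\bigO{\eps}\,\kappa^{\constQ}(\bXX_k)
\]
for some modest constants $C_1, C_2 > 0$. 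To extend the residual, I would track the floating-point evaluation of lines~\ref{line:bcgs-proj-begin}--\ref{line:bcgs-proj-end}: standard matrix product error bounds give $\RRbar_{1:k-1,k} = \bQQbar_{k-1}^T\vX_k + \Delta\RRbar_{1:k-1,k}$ and $\vVbar_k = \vX_k - \bQQbar_{k-1}\RRbar_{1:k-1,k} + \Delta\vVbar_k$, while the stability of \IOnoarg gives $\vVbar_k + \Delta\vG_k = \vQbar_k R_{kk}$, with every perturbation bounded by $\bigO{\eps}\norm{\vX_k}$. Assembling these identities yields $\vX_k + \Delta\vX_k = \bQQbar_{k-1}\RRbar_{1:k-1,k} + \vQbar_k R_{kk}$ with $\norm{\Delta\vX_k} \leq \bigO{\eps}\norm{\vX_k}$, which furnishes the $k$th block column of \eqref{eq:thm:bcgs:res} and keeps $\epsXkp$ uniformly $\bigO{\eps}$ throughout the induction.

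To resolve the recurrence, I would set $\kappa := \kappa(\bXX) \geq \kappa(\bXX_k)$ and unroll from $\omega_1 \leq \bigO{\eps}\kappa^{\constQA}$ to obtain
\[
    \epsQk \leq \bigO{\eps}\,(1 + C_1\kappa)^{k-1}\,\kappa^{\constQA} + \bigO{\eps}\,\kappa^{\constQ} \sum_{j=0}^{k-2} (1 + C_1\kappa)^{j}.
\]
Because $\kappa \geq 1$, both geometric factors collapse, up to absorbed constants, to $\kappa^{k-1}$ and $\kappa^{k-2}$, respectively. Setting $k = p$ then gives $\bigO{\eps}\bigl(\kappa^{\constQA + p - 1} + \kappa^{\constQ + p - 2}\bigr) \leq \bigO{\eps}\,\kappa^{p - 2 + \max\{\constQA + 1,\, \constQ\}}$, which is precisely \eqref{eq:thm:bcgs_loo}.

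The hard part will be the coupled bookkeeping inside the induction: one must verify in lockstep that $\epsXkp$ remains $\bigO{\eps}$ for every $k$, so that both the $\sigmin(\vVtil_k)$ lower bound of Lemma~\ref{lem:norm-Wk0} and the hypothesis $\epsXkp \kappa(\bXX_k) < 1$ of Lemma~\ref{lem:bcgs-kinnerloop} remain in force, while simultaneously carrying the two source terms of the recurrence separately so that the final exponent is the sharp $\max\{\constQA + 1,\constQ\}$ rather than, say, $\constQA + \constQ$. A naive induction that merges the two sources would yield a strictly weaker bound.
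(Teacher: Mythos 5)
Your proposal is correct and follows essentially the same route as the paper: a two-part induction on $k$, first establishing the block-column residual bound via standard floating-point analysis of the projection and intraortho stages, then feeding the resulting $\epsXkp = \bigO{\eps}$ into Lemma~\ref{lem:bcgs-kinnerloop} to obtain the one-step LOO recurrence and unrolling it (the paper states the unrolled form directly as its induction hypothesis $\epsQkp \leq \bigO{\eps}\bigl(\kappa(\bXX_{k-1})\bigr)^{\max\{\constQA+1,\constQ\}+k-3}$ rather than writing the geometric sum). Your closing remark about keeping the two source terms separate to land on $\max\{\constQA+1,\constQ\}$ is exactly the absorption step the paper performs in its final display.
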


\begin{proof}
    First, we prove a bound on the residual of $\BCGS$ by an inductive approach on the block vectors of $\bXX$.
    For the base case, the assumptions of $\IOAnoarg$ directly give
    \begin{equation*}
        \bXX_{1} + \Delta \bXX_{1} = \bQQbar_{1} \RRbar_1,
        \quad \norm{\Delta \bXX_{1}} \leq \bigO{\eps} \norm{\bXX_{1}}.
    \end{equation*}
    Then we prove that it holds for $k$ provided it holds for $k-1$.
    Assume that $\bXX_{k-1} + \Delta \bXX_{k-1} = \bQQbar_{k-1} \RRbar_{k-1}$ with $\norm{\Delta \bXX_{k-1}} \leq \bigO{\eps} \norm{\bXX_{k-1}}$.
    Then by~\eqref{eq:epsqr}, \eqref{eq:error-QtX}, \eqref{eq:Vbar_k}, and Lemma~\ref{lem:quantities-one-projection},
    \begin{equation*}
        \begin{split}
            \bXX_k + \Delta \bXX_k 
            & = \bmat{\bXX_{k-1} + \Delta \bXX_{k-1}& \vX_k + \Delta\vX_k} \\
            & = \bmat{\bQQbar_{k-1}& \vQbar_k} \bmat{\RRbar_{k-1}& \RRbar_{1:k-1,k} \\0& \Rbar_{kk}} \\
            & = \bmat{\bQQbar_{k-1} \RRbar_{k-1}& \bQQbar_{k-1} \RRbar_{1:k-1,k} + \vQbar_k \Rbar_{kk}} \\
            & = \bmat{\bQQbar_{k-1} \RRbar_{k-1}& \bQQbar_{k-1} \left(\bQQbar_{k-1}^T \vX_k + \Delta\RRbar_{1:k-1,k}\right)
            + \vX_k - \bQQbar_{k-1} \bQQbar_{k-1}^T \vX_k + \Delta \vVtil_k + \Delta \vG_k} \\
            & = \bmat{\bQQbar_{k-1} \RRbar_{k-1}& \vX_k + \bQQbar_{k-1}^T \Delta\RRbar_{1:k-1,k} + \Delta \vVtil_k + \Delta \vG_k}.
        \end{split}
    \end{equation*}
    Thus, $\Delta\vX_k = \bQQbar_{k-1}^T \Delta\RRbar_{1:k-1,k} + \Delta \vVtil_k + \Delta \vG_k$ satisfies $\norm{\Delta\vX_k} \leq \bigO{\eps} \norm{\vX_k}$ and further we have
    \begin{equation}
        \norm{\Delta\bXX_k}
        \leq \norm{\Delta\bXX_{k-1}} + \norm{\Delta\vX_k}
        \leq \bigO{\eps} \norm{\bXX_k},
    \end{equation}
    which proves~\eqref{eq:thm:bcgs:res} by induction.

    Next it remains to prove the LOO~\eqref{eq:thm:bcgs_loo} using Lemma~\ref{lem:bcgs-kinnerloop} via induction.  Note that from~\eqref{eq:thm:bcgs:res} we have already verified that the assumption of the residual in Lemma~\ref{lem:bcgs-kinnerloop} is satisfied.  The assumptions on $\IOAnoarg$ directly give the base case for the LOO, i.e., $\omega_1\leq \bigO{\eps} \kappa^{\constQA}(\bXX_1)\leq \bigO{\eps}\bigl(\kappa(\bXX_1)\bigr)^{\max\{\constQA + 1, \constQ\} - 1}$.
    Now we assume that $\epsQkp\leq \bigO{\eps}\bigl(\kappa(\bXX_{k-1})\bigr)^{\max\{\constQA + 1, \constQ\} + k - 3}$ and then bound \(\epsQk\).  Using Lemma~\ref{lem:bcgs-kinnerloop} and the assumption of $\IOnoarg$ we conclude the proof because
    \begin{align*}
        \norm{I - \bQQbar_k^T \bQQbar_k}
        & \leq \bigO{\eps}\bigl(\kappa(\bXX_{k-1})\bigr)^{\max\{\constQA + 1, \constQ\} + k - 2} + \bigO{\eps} \kappa^{\constQ}(\vX_k) \\
        & \leq \bigO{\eps}\bigl(\kappa(\bXX_k)\bigr)^{\max\{\constQA + 1, \constQ\} + k - 2}.
    \end{align*}
\end{proof}

If $\epsQkp$, $\epsQ\leq \bigO{1}$ and $\epsQ\leq \epsQkp$ for some $k-1$, we can simplify~\eqref{eq:epsQk-relation} in Lemma~\ref{lem:bcgs-kinnerloop} as
\begin{equation*}
    \norm{I - \bQQbar_k^T \bQQbar_k}
    \leq \bigO{1}\norm{I - \bQQbar_{k-1}^T \bQQbar_{k-1}}\kappa(\bXX_k).
\end{equation*}
In other words, $\IOAnoarg$ and \IOnoarg have essentially no effect from one iteration to the next on the orthogonality of the basis.  Theorem~\ref{thm:bcgs} makes this observation more precise.  In particular, \eqref{eq:thm:bcgs_loo} shows that in the best of circumstances, $\kappa(\bXX)$ still has an exponent of at least $p-1$ in the bound, which is the same as what Kie{\l}basi{\'n}ski and Schwetlick proved for column-wise \texttt{CGS} \cite[pp~284]{KieS88} (i.e., for $s=1$.  Table \ref{tab:LOO-bcgsa} compares provable LOO bounds for various choices of $\IOAnoarg$ and \IOnoarg.  Although we have been unable to find numerical examples that attain this bound, Figure~\ref{fig:roadmap_1} clearly shows that $p > 2$, even when $\IOAnoarg = \IOnoarg = \HouseQR$.

\begin{table}[!htbp]
    \centering
    \begin{tabular}{ccccc} \hline
        $\IOAnoarg$     & $\IOnoarg$        & $\norm{I - \bQQbar^T \bQQbar}$ \\ \hline
        \HouseQR        & \HouseQR or \MGS  & $\bigO{\eps}(\kappa(\bXX))^{p-1}$ \\
        \HouseQR or \MGS& \CholQR           & $\bigO{\eps}(\kappa(\bXX))^{p}$ \\
        \CholQR         & \CholQR           & $\bigO{\eps}(\kappa(\bXX))^{p+1}$ \\ \hline
    \end{tabular}
    \caption{Upper bounds on the loss of orthogonality for $\BCGSA$ with different $\IOnoarg$ analyzed in Theorem~\ref{thm:bcgs}. \label{tab:LOO-bcgsa}}
\end{table}

\subsection{Loss of orthogonality of \texttt{BCGSI+A}} \label{sec:BCGSIROA}
For Algorithm~\ref{alg:BCGSIROA}, the projection and intraortho stages can be written respectively as follows:
\begin{equation}
    \vG = \Proj{\vX, \bQQ} := (I - \bQQ \bQQ^T)(I - \bQQ \bQQ^T) \vX \mbox{ and }
    \QR{\vG} := \IOtwo{\vG}. 
\end{equation}
Specifically, for the $k$th inner loop of Algorithm~\ref{alg:BCGSIROA},
\begin{align}
    & \vH_k = \Proj{\vX_kS_{kk}^\inv, \bQQ_{k-1}} = (I - \bQQ_{k-1} \bQQ_{k-1}^T)(I - \bQQ_{k-1} \bQQ_{k-1}^T) \vX_kS_{kk}^\inv, \label{eq:two-proj+one-orth-proj} \\
    &[\vQ_k, R_{kk}]= \QR{\vH_k} = \IOtwo{\vH_k}, \label{eq:bcgsiroa3-orth}
\end{align}
where $\vU_k S_{kk} = \IOone{(I - \bQQ_{k-1} \bQQ_{k-1}^T) \vX_k}$.

\begin{lemma} \label{lem:bcgsiro-property}
    Let $\vVtil_k = (I - \bQQbar_{k-1} \bQQbar_{k-1}^T) \vX_k$ and $\vHtil_k = (I - \bQQbar_{k-1} \bQQbar_{k-1}^T) \vUbar_k$, where $\bQQbar_{k-1}$ satisfies $\norm{I - \bQQbar_{k-1}^T\bQQbar_{k-1}}\leq \epsQkp$.
    Assume that for all $\vX \in \spR^{m \times s}$ with $\kappa(\vX) \leq \kappa(\bXX)$, the following hold for $[\vQbar, \Rbar] = \IOone{\vX}$:
    \begin{equation*}
        \begin{split}
            & \vX + \Delta \vX = \vQbar \Rbar,
            \quad \norm{\Delta \vX} \leq \bigO{\eps} \norm{\vX}, \\
            & \norm{I - \vQbar^T \vQbar} \leq \bigO{\eps} \kappa^{\alpha_1}(\vX).
        \end{split}
    \end{equation*}
    Similarly, assume the following hold for $[\vQbar, \Rbar] = \IOtwo{\vX}$:
    \begin{equation*}
        \begin{split}
            & \vX + \Delta \vX = \vQbar \Rbar,
            \quad \norm{\Delta \vX} \leq \bigO{\eps} \norm{\vX}, \\
            & \norm{I - \vQbar^T \vQbar} \leq \bigO{\eps} \kappa^{\alpha_2}(\vX).
        \end{split}
    \end{equation*}
    Furthermore, if
    \begin{equation}
        \begin{split}
            & \bXX_{k-1} + \Delta \bXX_{k-1} = \bQQbar_{k-1} \RRbar_{k-1},
            \quad \norm{\Delta \bXX_{k-1}}
            \leq \epsXkp \norm{\bXX_{k-1}}
        \end{split}
    \end{equation}
    as well as
    \begin{equation}\label{eq:lem-bcgsiro-property:assump}
        \bigO{\eps} \kappa^{\alpha_1}(\bXX_k)+2 \left(\epsQkp + \bigO{\eps} \right)(1 + \epsQkp)^2 \kappa(\bXX_k) + \epsXkp \kappa(\bXX_k) \leq \frac{1}{2},
    \end{equation}
    then the quantities computed by Algorithm~\ref{alg:BCGSIROA} satisfy the following, for any $k \geq 2$:
    \begin{align}
        & \SSbar_{1:k-1,k} = \bQQbar_{k-1}^T \vX_k + \Delta\SSbar_{1:k-1,k},
        \quad \norm{\Delta\SSbar_{1:k-1,k}} \leq \bigO{\eps} \norm{\vX_k}; \label{eq:lem-bcgsiro-property-SS} \\
        & \vUbar_k \Sbar_{kk} = \vVtil_k + \Delta\vVtil_k,
        \quad \norm{\Delta\vVtil_k} \leq \bigO{\eps} \norm{\vX_k}; \label{eq:lem-bcgsiro-property-orth1} \\
        & \norm{I - \vUbar_k^T \vUbar_k} \leq \bigO{\eps} \kappa^{\alpha_1}(\bXX_k),
        \quad \norm{\vUbar_k} \leq 1 + \bigO{\eps} \kappa^{\alpha_1}(\bXX_k); \label{eq:lem-bcgsiro-property-lossUk} \\
        & \norm{\vX_k} \norm{\Sbar_{kk}^\inv} \leq 2 \kappa(\bXX_k); \\
        & \bar{\TT}_{1:k-1,k} = \bQQbar_{k-1}^T \vUbar_k + \Delta\bar{\TT}_{1:k-1,k}, \label{eq:lem-bcgsiro-property-ETT} \\
        & \quad \norm{\Delta\bar{\TT}_{1:k-1,k}} \leq \bigO{\eps}(1 + \bigO{\eps} \kappa^{\alpha_1}(\bXX_k)); \notag \\
        & \vQbar_k \Tbar_{kk} = \vHtil_k + \Delta\vHtil_k,
        \quad \norm{\Delta\vHtil_k} \leq \bigO{\eps}(1 + \bigO{\eps} \kappa^{\alpha_1}(\bXX_k)); \label{eq:lem-bcgsiro-property-orth2} \\
        & \norm{I - \vQbar_k^T \vQbar_k} \leq \bigO{\eps},
        \quad \norm{\vQbar_k} \leq 1 + \bigO{\eps}; \label{eq:lem-bcgsiro-property-Qkloss} \\
        & \RRbar_{1:k-1,k} = \SSbar_{1:k-1,k} + \bar{\TT}_{1:k-1,k} \Sbar_{kk} + \Delta\RRbar_{1:k-1,k}, \label{eq:lem-bcgsiro-property-ERR} \\
        & \quad \norm{\Delta\RRbar_{1:k-1,k}} \leq \bigO{\eps} \norm{\vX_k}; \mbox{ and} \notag \\
        & \Rbar_{kk} = \Tbar_{kk} \Sbar_{kk} + \Delta\Rbar_{kk},
        \quad \norm{\Delta\Rbar_{kk}} \leq \bigO{\eps} \norm{\vX_k}.
    \end{align}
\end{lemma}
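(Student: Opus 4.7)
The plan is to verify the nine assertions in the order listed, which mirrors the execution order of one outer iteration of Algorithm~\ref{alg:BCGSIROA}, using the abstract framework of Section~\ref{sec:framework} together with Lemmas~\ref{lem:norm-Wk0} and~\ref{lem:quantities-one-projection} as building blocks, and then applying the $\IOonenoarg$/$\IOtwonoarg$ hypotheses at the appropriate moments. The key idea throughout is that whatever loss of orthogonality $\IOonenoarg$ produces in $\vUbar_k$ can be \emph{cleaned up} by the second projection and $\IOtwonoarg$, because the input $\vHbar_k$ to $\IOtwonoarg$ turns out to have condition number bounded independently of $\kappa(\bXX_k)$.

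First, I would obtain~\eqref{eq:lem-bcgsiro-property-SS} from the standard backward-error bound for the matrix product $\bQQbar_{k-1}^T\vX_k$ in line~\ref{BCGSIROAS}, analogous to~\eqref{eq:error-QtX}. Combining this with the rounding bound for the subsequent subtraction yields $\vVbar_k = \vVtil_k + \Delta\vVtil_k'$ with $\norm{\Delta\vVtil_k'} \leq \bigO{\eps}\norm{\vX_k}$; applying the $\IOonenoarg$ assumption in line~\ref{BCGSIROA0} and absorbing the intraortho error into $\Delta\vVtil_k$ produces~\eqref{eq:lem-bcgsiro-property-orth1}. Next, the LOO bound on $\vUbar_k$ in~\eqref{eq:lem-bcgsiro-property-lossUk} follows from the $\IOonenoarg$ hypothesis $\norm{I-\vUbar_k^T\vUbar_k}\leq \bigO{\eps}\kappa^{\alpha_1}(\vVbar_k)$, where Lemma~\ref{lem:norm-Wk0} together with the residual hypothesis furnishes $\kappa(\vVbar_k) = \bigO{\kappa(\bXX_k)}$. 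The estimate $\norm{\vX_k}\norm{\Sbar_{kk}^\inv}\leq 2\kappa(\bXX_k)$ then comes from applying Lemma~\ref{lem:norm-R} to~\eqref{eq:lem-bcgsiro-property-orth1}, bounding $\norm{\vUbar_k}\leq 1+\bigO{\eps}\kappa^{\alpha_1}(\bXX_k)$ and $\sigmin(\vVtil_k)$ via Lemma~\ref{lem:norm-Wk0}; assumption~\eqref{eq:lem-bcgsiro-property:assump} ensures the denominator stays bounded below by $\tfrac{1}{2}\sigmin(\bXX_k)$.

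The second projection and intraortho proceed in parallel: \eqref{eq:lem-bcgsiro-property-ETT} follows from the standard product/subtraction error for $\bQQbar_{k-1}^T\vUbar_k$, using $\norm{\vUbar_k}\leq 1+\bigO{\eps}\kappa^{\alpha_1}(\bXX_k)$ from~\eqref{eq:lem-bcgsiro-property-lossUk}, and~\eqref{eq:lem-bcgsiro-property-orth2} comes from applying $\IOtwonoarg$ to $\vHbar_k$ and absorbing errors into $\Delta\vHtil_k$. Finally, \eqref{eq:lem-bcgsiro-property-ERR} and the $\Rbar_{kk}$ identity are pure componentwise rounding-error bounds on the additions and products in the combining lines k.3.1, k.3.2, exploiting that $\norm{\SSbar_{1:k-1,k}}$, $\norm{\bar\TT_{1:k-1,k}\Sbar_{kk}}\leq \bigO{\norm{\vX_k}}$.

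The substantive step is~\eqref{eq:lem-bcgsiro-property-Qkloss}: obtaining $\norm{I-\vQbar_k^T\vQbar_k}\leq\bigO{\eps}$ from the $\IOtwonoarg$ hypothesis $\norm{I-\vQbar_k^T\vQbar_k}\leq \bigO{\eps}\kappa^{\alpha_2}(\vHbar_k)$, which requires showing $\kappa(\vHbar_k)=\bigO{1}$ regardless of $\kappa(\bXX_k)$. The route is to first bound $\norm{\bQQbar_{k-1}^T\vUbar_k}$ by left-multiplying~\eqref{eq:lem-bcgsiro-property-orth1} by $\bQQbar_{k-1}^T$, using $\norm{I-\bQQbar_{k-1}^T\bQQbar_{k-1}}\leq\epsQkp$, and then dividing by $\Sbar_{kk}$ via the estimate on $\norm{\Sbar_{kk}^\inv}$, which yields $\norm{\bQQbar_{k-1}^T\vUbar_k}\leq 2(\epsQkp+\bigO{\eps})(1+\epsQkp)\kappa(\bXX_k)$. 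Combined with the near-orthogonality of $\vUbar_k$, this forces $\vHtil_k\approx\vUbar_k$ both in norm and smallest singular value, so that under the smallness hypothesis~\eqref{eq:lem-bcgsiro-property:assump} we conclude $\sigmin(\vHbar_k)\geq\tfrac{1}{2}$ and $\norm{\vHbar_k}\leq \tfrac{3}{2}$, whence $\kappa^{\alpha_2}(\vHbar_k)=\bigO{1}$. The main obstacle will be book-keeping all constants so that hypothesis~\eqref{eq:lem-bcgsiro-property:assump} exactly absorbs the three competing contributions ($\IOonenoarg$-error, prior LOO $\epsQkp$, and residual error $\epsXkp$) that enter the condition number of $\vHbar_k$; this is what dictates the peculiar form of~\eqref{eq:lem-bcgsiro-property:assump}.
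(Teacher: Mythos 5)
Your proposal is correct and follows essentially the same route as the paper's proof: the same sequence of standard rounding-error bounds combined with Lemmas~\ref{lem:norm-Wk0}, \ref{lem:quantities-one-projection}, and \ref{lem:norm-R}, the same use of assumption~\eqref{eq:lem-bcgsiro-property:assump} to get $\norm{\vX_k}\norm{\Sbar_{kk}^\inv}\leq 2\kappa(\bXX_k)$, and the same key step of showing $\kappa(\vHtil_k)=\bigO{1}$ by controlling the residual component $\bQQbar_{k-1}^T\vUbar_k$ (your bound $2(\epsQkp+\bigO{\eps})(1+\epsQkp)^2\kappa(\bXX_k)$ on the perturbation of $\sigmin(\vUbar_k)$ matches the paper's~\eqref{eq:proof-lem-bcgsiro-property-sigmamintildeW} exactly). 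The only cosmetic difference is that you reach that perturbation bound by left-multiplying~\eqref{eq:lem-bcgsiro-property-orth1} by $\bQQbar_{k-1}^T$, whereas the paper substitutes $\vUbar_k=(\vVtil_k+\Delta\vVtil_k)\Sbar_{kk}^\inv$ into the definition of $\vHtil_k$; these are the same algebra.
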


\begin{proof}
    Similarly to~\eqref{eq:error-QtX} and~\eqref{eq:Vbar_k} in the proof of Lemma~\ref{lem:quantities-one-projection}, it is easy to verify~\eqref{eq:lem-bcgsiro-property-SS}, \eqref{eq:lem-bcgsiro-property-orth1}, and
    \begin{align}
        & \bar{\TT}_{1:k-1,k} = \bQQbar_{k-1}^T \vUbar_k + \Delta\bar{\TT}_{1:k-1,k},
        \quad \norm{\Delta\bar{\TT}_{1:k-1,k}} \leq \bigO{\eps} \norm{\vUbar_k}, \label{eq:proof-lem-bcgsiro-property-TT} \\
        & \vQbar_k \Tbar_{kk} = \vHtil_k + \Delta\vHtil_k,
        \quad \norm{\Delta\vHtil_k} \leq \bigO{\eps} \norm{\vUbar_k}. \label{eq:proof-lem-bcgsiro-property-QT}
    \end{align}
    By the assumption on $\IOonenoarg$ and by applying Lemma~\ref{lem:norm-Wk0} and~\eqref{eq:Vbar_k}, we have
    \begin{equation}
        \begin{split}
            \norm{I - \vUbar_k^T \vUbar_k}
            & \leq \bigO{\eps} \kappa^{\alpha_1}(\vVbar_k)
            \leq \bigO{\eps} \kappa^{\alpha_1}(\vVtil_k)
            \leq \bigO{\eps} \kappa^{\alpha_1}(\bXX_k).
        \end{split}
    \end{equation}
    Furthermore, $\norm{\vUbar_k} \leq 1 + \bigO{\eps} \kappa^{\alpha_1}(\bXX_k)$ and $\sigmin(\vUbar_k) \geq 1 - \bigO{\eps} \kappa^{\alpha_1}(\bXX_k)$.  Combining this with~\eqref{eq:proof-lem-bcgsiro-property-TT} and~\eqref{eq:proof-lem-bcgsiro-property-QT}, we bound $\norm{\Delta\bar{\TT}_{1:k-1,k}}$ and $\norm{\Delta\vHtil_k}$ to obtain \eqref{eq:lem-bcgsiro-property-ETT} and \eqref{eq:lem-bcgsiro-property-orth2}.
    
    Next, we aim to bound $\norm{\vX_k} \norm{\Sbar_{kk}^\inv}$ and therefore need to analyze the relationship between $\norm{\Sbar_{kk}^\inv}$ and $\sigmin(\bXX_k)$.  By~\eqref{eq:lem-bcgsiro-property-orth1} and Lemma~\ref{lem:norm-R}, $\norm{\Sbar_{kk}^\inv}$ can be bounded as follows:
    \begin{equation}
        \norm{\Sbar_{kk}^\inv} \leq \frac{\norm{\vUbar_k}}{\sigmin(\vVtil_k) - \norm{\Delta\vVtil_k}}
        \leq \frac{1 + \bigO{\eps} \kappa^{\alpha_1}(\bXX_k)}{\sigmin(\vVtil_k) - \bigO{\eps} \norm{\vX_k}}.
    \end{equation}
    Together with the assumption~\eqref{eq:lem-bcgsiro-property:assump} and the following implication of Lemma~\ref{lem:norm-Wk0},
    \begin{equation}
        \sigmin(\vVtil_k)
        \geq \sigmin(\bXX_k) - \epsXkp \kappa(\bXX_k),
    \end{equation}
    we bound $\norm{\vX_k} \norm{\Sbar_{kk}^\inv}$ as follows:
    \begin{equation}\label{eq:norm-XkSkkinv}
        \begin{split}
            \norm{\vX_k} \norm{\Sbar_{kk}^\inv}
            & \leq \frac{\norm{\vX_k}}{\sigmin(\bXX_k) - \bigO{\eps} \norm{\vX_k}- \epsXkp \norm{\bXX_{k-1}}} \\
            & \leq \frac{\kappa(\bXX_k)}{1 - \bigO{\eps} \kappa(\bXX_k) - \epsXkp \kappa(\bXX_k)} \\
            & \leq 2 \kappa(\bXX_k).
        \end{split}
    \end{equation}
    
    Now we seek a tighter bound on the LOO of $\vQbar_k$.  Recall the definition of $\vHtil_k$ (from this lemma's assumptions), and combine \eqref{eq:lem-bcgsiro-property-lossUk} with the assumption~\eqref{eq:lem-bcgsiro-property:assump} to obtain
    \begin{equation} \label{eq:Hbark}
        \begin{split}
        \vHbar_k &= (I - \bQQbar_{k-1} \bQQbar_{k-1}^T) \vUbar_k + \Delta\vHbar_k = \vHtil_k + \Delta\vHbar_k,\\
        \norm{\Delta\vHbar_k} &\leq \bigO{\eps}\norm{\vUbar_k} \leq \bigO{\eps}.    
        \end{split}
    \end{equation}
    Combining~\eqref{eq:Hbark} with the assumption on $\IOtwonoarg$, we have
    \begin{equation}
        \norm{I - \vQbar_k^T \vQbar_k}
        \leq \bigO{\eps} \kappa^{\alpha_2}(\vHbar_k)
        \leq \bigO{\eps} \kappa^{\alpha_2}(\vHtil_k).
    \end{equation}
    By the definitions of $\vVtil_k$ and $\vHtil_k$, as well as \eqref{eq:lem-bcgsiro-property-orth1}, we find that
    \begin{equation} \label{eq:proof-lem-bcgsiro-property-sigmamintildeW}
        \begin{split}
            & \sigmin(\vHtil_k) \\
            & \quad = \sigmin\left(\vUbar_k - \bQQbar_{k-1} \bQQbar_{k-1}^T(\vVtil_k + \Delta\vVtil_k) \Sbar_{kk}^\inv \right) \\
            & \quad = \sigmin\left(\vUbar_k
            - \bQQbar_{k-1}(\bQQbar_{k-1}^T \bQQbar_{k-1}- I) \bQQbar_{k-1}^T \vX_k \Sbar_{kk}^\inv
            - \bQQbar_{k-1} \bQQbar_{k-1}^T \Delta\vVtil_k \Sbar_{kk}^\inv \right) \\
            & \quad \geq \sigmin\left(\vUbar_k \right)
            - \norm{\bQQbar_{k-1}(\bQQbar_{k-1}^T \bQQbar_{k-1}- I) \bQQbar_{k-1}^T \vX_k \Sbar_{kk}^\inv
            + \bQQbar_{k-1} \bQQbar_{k-1}^T \Delta\vVtil_k \Sbar_{kk}^\inv} \\
            & \quad \geq \sigmin\left(\vUbar_k \right)
            - \left(\epsQkp + \bigO{\eps} \right)(1 + \epsQkp)^2 \norm{\vX_k} \norm{\Sbar_{kk}^\inv} \\
            & \quad \geq \sigmin\left(\vUbar_k \right)
            - 2 \left(\epsQkp + \bigO{\eps} \right)(1 + \epsQkp)^2 \kappa(\bXX_k).
        \end{split}
    \end{equation}
    Furthermore, together with
    \begin{equation} \label{eq:proof-lem-bcgsiro-property-tildeW}
        \norm{\vHtil_k}
        \leq \sqrt{m}\, (1 + \epsQkp) \norm{\vUbar_k}
        \leq \sqrt{m}\, (1 + \epsQkp) (1 + \bigO{\eps} \kappa^{\alpha_1}(\bXX_k)),
    \end{equation}
    it follows that
    \begin{equation}
        \begin{split}
            \norm{I - \vQbar_k^T \vQbar_k}
            & \leq \bigO{\eps} \left(\frac{\sqrt{m}\, (1 + \epsQkp) (1 + \bigO{\eps} \kappa^{\alpha_1}(\bXX_k))}
            {\sigmin\left(\vUbar_k \right)
            - 2 \left(\epsQkp + \bigO{\eps} \right)(1 + \epsQkp)^2 \kappa(\bXX_k)} \right)^{\alpha_2} \\
            & \leq \bigO{\eps} \left(\frac{\sqrt{m}\, (1 + \epsQkp) (1 + \bigO{\eps} \kappa^{\alpha_1}(\bXX_k))}
            {1 - \bigO{\eps} \kappa^{\alpha_1}(\bXX_k)
            - 2 \left(\epsQkp + \bigO{\eps} \right)(1 + \epsQkp)^2 \kappa(\bXX_k)} \right)^{\alpha_2} \\
            & \leq \bigO{\eps}.
        \end{split}
    \end{equation}
    
    A simple floating-point analysis on \eqref{eq:lem-bcgsiro-property-SS}, \eqref{eq:lem-bcgsiro-property-orth1}, and~\eqref{eq:lem-bcgsiro-property-ETT} leads to \eqref{eq:lem-bcgsiro-property-ERR}.  Similarly, the bound on $\norm{\Delta \Rbar_{kk}}$ can be obtained by combining \eqref{eq:lem-bcgsiro-property-SS} and~\eqref{eq:lem-bcgsiro-property-orth1} with \eqref{eq:lem-bcgsiro-property-orth2} and \eqref{eq:proof-lem-bcgsiro-property-tildeW}.
\end{proof}

We are now prepared to analyze the behavior of the $k$th inner loop of $\BCGSIROA$.
\begin{lemma} \label{lem:bcgsiroa-stepk}
    Assume that $\bQQbar_{k-1}$ satisfies $\norm{I - \bQQbar_{k-1}^T\bQQbar_{k-1}}\leq \epsQkp$ and for all $\vX \in \spR^{m \times s}$ with $\kappa(\vX) \leq \kappa(\bXX)$, the following hold for $[\vQbar, \Rbar] = \IOone{\vX}$:
    \begin{equation*}
        \begin{split}
            & \vX + \Delta \vX = \vQbar \Rbar,
            \quad \norm{\Delta \vX} \leq \bigO{\eps} \norm{\vX}, \\
            & \norm{I - \vQbar^T \vQbar} \leq \bigO{\eps} \kappa^{\alpha_1}(\vX).
        \end{split}
    \end{equation*}
    Similarly, assume the following hold for $[\vQbar, \Rbar] = \IOtwo{\vX}$:
    \begin{equation*}
        \begin{split}
            & \vX + \Delta \vX = \vQbar \Rbar,
            \quad \norm{\Delta \vX} \leq \bigO{\eps} \norm{\vX}, \\
            & \norm{I - \vQbar^T \vQbar} \leq \bigO{\eps} \kappa^{\alpha_2}(\vX).
        \end{split}
    \end{equation*}
    Furthermore, if
    \begin{equation}
        \begin{split}
            & \bXX_{k-1} + \Delta \bXX_{k-1} = \bQQbar_{k-1} \RRbar_{k-1},
            \quad \norm{\Delta \bXX_{k-1}}
            \leq \epsXkp \norm{\bXX_{k-1}},
        \end{split}
    \end{equation}
    as well as~\eqref{eq:lem-bcgsiro-property:assump} holds, then for the $k$th inner loop of Algorithm~\ref{alg:BCGSIROA} with any $k\geq 2$,
    \begin{equation*}
        \norm{I - \bQQbar_k^T \bQQbar_k}
        \leq \epsQkp
        + 2 \epsQkp(1 + \epsQkp) \left(\epsQkp + \bigO{\eps} \right) \kappa(\bXX_k)
        + \bigO{\eps}.
    \end{equation*}
\end{lemma}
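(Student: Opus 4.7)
The plan is to identify the $\BCGSIROA$ step with the abstract framework of Lemma~\ref{lem:epsQk-relation} and then plug in ingredients already catalogued in Lemma~\ref{lem:bcgsiro-property}. Concretely, I would take $\vGtil := \vHtil_k = (I - \bQQbar_{k-1}\bQQbar_{k-1}^T)\vUbar_k$ as the ``exact'' projected block and $\Rbar := \Tbar_{kk}$ as the triangular factor produced by the final $\IOtwonoarg$. Equation~\eqref{eq:Hbark} directly gives $\epsproj = \bigO{\eps}$ for the double projection stage; this unconditional $\bigO{\eps}$ (rather than a $\kappa$-dependent quantity) is precisely the benefit of reorthogonalization. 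From the $\IOtwonoarg$ assumption together with \eqref{eq:lem-bcgsiro-property-Qkloss} and~\eqref{eq:lem-bcgsiro-property-orth2}, I obtain $\epsqr = \bigO{\eps}$ and $\epsQ = \bigO{\eps}$; the latter is the reason the $\alpha_2$ exponent does not appear in the final bound. Under assumption~\eqref{eq:lem-bcgsiro-property:assump}, the lower bound~\eqref{eq:proof-lem-bcgsiro-property-sigmamintildeW} and upper bound~\eqref{eq:proof-lem-bcgsiro-property-tildeW} give $\sigmin(\vHtil_k) \geq 1/2$ and $\kappa(\vHtil_k) = \bigO{1}$, so the rational correction term in~\eqref{eq:lem:epsQk-relation:epsQk-relation} collapses to $\bigO{\eps}$.

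The main obstacle is a careful bound on the cross term $\norm{\bQQbar_{k-1}^T \vHtil_k \Tbar_{kk}^\inv}$, which must produce the $\epsQkp(1+\epsQkp)(\epsQkp + \bigO{\eps})\kappa(\bXX_k)$ contribution. My approach is a two-step peeling. First, rewrite
\[
    \bQQbar_{k-1}^T \vHtil_k = (I - \bQQbar_{k-1}^T \bQQbar_{k-1})\, \bQQbar_{k-1}^T \vUbar_k,
\]
which extracts one factor of $\epsQkp$. Next, express $\vUbar_k$ through the first intraortho identity~\eqref{eq:lem-bcgsiro-property-orth1} as $\vUbar_k = (\vVtil_k + \Delta\vVtil_k)\Sbar_{kk}^\inv$, and observe that
\[
    \bQQbar_{k-1}^T \vVtil_k = (I - \bQQbar_{k-1}^T \bQQbar_{k-1})\, \bQQbar_{k-1}^T \vX_k,
\]
which extracts a second factor of $\epsQkp$ while leaving $\norm{\vX_k}$ behind; the $\Delta\vVtil_k$ term contributes only $\bigO{\eps}\norm{\vX_k}$. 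Using $\norm{\vX_k}\norm{\Sbar_{kk}^\inv} \leq 2\kappa(\bXX_k)$ from Lemma~\ref{lem:bcgsiro-property}, and $\norm{\Tbar_{kk}^\inv} = \bigO{1}$ via Lemma~\ref{lem:norm-R} with $\sigmin(\vHtil_k) \geq 1/2$, the desired bound on the cross term follows up to absorbable constants.

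Assembling all pieces into~\eqref{eq:lem:epsQk-relation:epsQk-relation} then yields
\[
    \norm{I - \bQQbar_k^T \bQQbar_k} \leq \epsQkp + 2\epsQkp(1+\epsQkp)(\epsQkp + \bigO{\eps})\kappa(\bXX_k) + \bigO{\eps},
\]
as claimed. Conceptually, the proof shows that the second projection supplies an extra factor of $\epsQkp$ which, under assumption~\eqref{eq:lem-bcgsiro-property:assump}, suppresses the LOO growth even though the intermediate quantity $\vUbar_k$ is orthogonal to $\bQQbar_{k-1}$ only to within $\bigO{\epsQkp \kappa(\bXX_k)}$ rather than $\bigO{\eps}$. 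The strong assumption on $\IOAnoarg$ at $k=2$ then provides the induction base with $\omega_1 = \bigO{\eps}$, so that once the recursion is iterated in the subsequent LOO theorem, the $\epsQkp^2 \kappa(\bXX_k)$ term stays negligible and the overall $\bigO{\eps}$ LOO bound is preserved.
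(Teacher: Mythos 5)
Your proposal is correct and follows essentially the same route as the paper: the paper likewise reduces everything to bounding $\norm{\bQQbar_{k-1}^T \vQbar_k}$ via the identical two-step peeling (first pulling out $I - \bQQbar_{k-1}^T\bQQbar_{k-1}$ from $\bQQbar_{k-1}^T\vHtil_k$, then passing through $\vUbar_k = (\vVtil_k + \Delta\vVtil_k)\Sbar_{kk}^\inv$ to extract the second $\epsQkp$ factor), combined with $\norm{\vX_k}\norm{\Sbar_{kk}^\inv} \leq 2\kappa(\bXX_k)$ and $\norm{\Tbar_{kk}^\inv} \leq 2 + \bigO{\eps}$. The only cosmetic difference is that you package the final assembly through Lemma~\ref{lem:epsQk-relation} while the paper writes out the underlying triangle inequality directly; the technical content is the same.
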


\begin{proof}
    We apply Lemma~\ref{lem:bcgsiro-property} to bound $\norm{\bQQbar_{k-1}^T \vQbar_k}$ as follows:
    \begin{equation}
        \begin{split}
            \norm{\bQQbar_{k-1}^T \vQbar_k}
            & \leq \norm{\bQQbar_{k-1}^T \vHtil_k \Tbar_{kk}^\inv}
            + \norm{\bQQbar_{k-1}^T \Delta\vHtil_k \Tbar_{kk}^\inv} \\
            & \leq \norm{\bQQbar_{k-1}^T(I - \bQQbar_{k-1} \bQQbar_{k-1}^T) \vUbar_k \Tbar_{kk}^\inv}
            + \norm{\bQQbar_{k-1}^T \Delta\vHtil_k\Tbar_{kk}^\inv} \\
            & \leq \epsQkp \norm{\bQQbar_{k-1}^T \vUbar_k \Tbar_{kk}^\inv}
            + \norm{\bQQbar_{k-1}^T \Delta\vHtil_k \Tbar_{kk}^\inv} \\
            & \leq \epsQkp \left(\norm{\bQQbar_{k-1}^T \vVtil_k \Sbar_{kk}^\inv \Tbar_{kk}^\inv}
            + \norm{\bQQbar_{k-1}^T \Delta\vVtil_k \Sbar_{kk}^\inv \Tbar_{kk}^\inv} \right) \\
                & \quad + \norm{\bQQbar_{k-1}^T \Delta\vHtil_k \Tbar_{kk}^\inv} \\
            & \leq \epsQkp \norm{\bQQbar_{k-1}^T(I - \bQQbar_{k-1} \bQQbar_{k-1}^T) \vX_k \Sbar_{kk}^\inv \Tbar_{kk}^\inv} \\
                & \quad + \epsQkp \norm{\bQQbar_{k-1}^T \Delta\vVtil_k \Sbar_{kk}^\inv \Tbar_{kk}^\inv} + \norm{\bQQbar_{k-1}^T \Delta\vHtil_k \Tbar_{kk}^\inv} \\
            & \leq \epsQkp^2 \norm{\bQQbar_{k-1}^T \vX_k \Sbar_{kk}^\inv \Tbar_{kk}^\inv} \\
                & \quad + \epsQkp \norm{\bQQbar_{k-1}^T \Delta\vVtil_k \Sbar_{kk}^\inv \Tbar_{kk}^\inv} + \norm{\bQQbar_{k-1}^T \Delta\vHtil_k \Tbar_{kk}^\inv} \\
            & \leq \epsQkp(1 + \epsQkp) \left(\epsQkp + \bigO{\eps} \right) \norm{\vX_k} \norm{\Sbar_{kk}^\inv} \norm{\Tbar_{kk}^\inv} \\
                 & + (1 + \epsQkp) \bigO{\eps}(1 + \bigO{\eps} \kappa^{\alpha_1}(\bXX_k)) \norm{\Tbar_{kk}^\inv}.
        \end{split}
    \end{equation}
    By Lemma~\ref{lem:norm-R}, \eqref{eq:lem-bcgsiro-property-orth2}, \eqref{eq:lem-bcgsiro-property-Qkloss}, and~\eqref{eq:proof-lem-bcgsiro-property-sigmamintildeW}, we have
    \begin{equation*}
    \begin{split}
        \norm{\Tbar_{kk}^\inv}
        & \leq \frac{\norm{\vQbar_k}}
        {\sigmin(\vHtil_k) - \bigO{\eps}(1 + \bigO{\eps} \kappa^{\alpha_1}(\bXX_k))} \\
        & \leq \frac{1 + \bigO{\eps}}{1 - \bigO{\eps} \kappa^{\alpha_1}(\bXX_k)
        - 2 \left(\epsQkp + \bigO{\eps} \right)(1 + \epsQkp)^2 \kappa(\bXX_k)} \\
        & \leq 2+ \bigO{\eps}.
    \end{split}
    \end{equation*}
    Combining this with~\eqref{eq:norm-XkSkkinv} proved in Lemma~\ref{lem:bcgsiro-property}, we bound $\norm{\bQQbar_{k-1}^T \vQbar_k}$.  Finally, we conclude the proof because of
    \begin{equation*}
        \norm{I - \bQQbar_k^T \bQQbar_k}
        \leq \norm{I - \bQQbar_{k-1}^T \bQQbar_{k-1}}
        + 2 \norm{\bQQbar_{k-1}^T \vQbar_k}
        + \norm{I - \vQbar_k^T \vQbar_k}.
    \end{equation*}
    and \eqref{eq:lem-bcgsiro-property-Qkloss}.
\end{proof}

By induction on $k$, we achieve following theorem to show the LOO of $\BCGSIROA$.
\begin{theorem} \label{thm:bcgsiroa}
    Let $\bQQbar$ and $\RRbar$ denote the computed results of Algorithm~\ref{alg:BCGSIROA}.  Assume that for all $\vX \in \spR^{m \times s}$ with $\kappa(\vX) \leq \kappa(\bXX)$, the following hold for $[\vQbar, \Rbar] = \IOA{\vX}$:
    \begin{equation*}
        \begin{split}
            & \vX + \Delta \vX = \vQbar \Rbar,
            \quad \norm{\Delta \vX} \leq \bigO{\eps} \norm{\vX}, \\
            & \norm{I - \vQbar^T \vQbar} \leq \bigO{\eps}.
        \end{split}
    \end{equation*}
    Likewise, assume the following hold for $[\vQbar, \Rbar] = \IOone{\vX}$ and $[\vQbar, \Rbar] = \IOtwo{\vX}$, respectively:
    \begin{equation*}
        \begin{split}
            & \vX + \Delta \vX = \vQbar \Rbar,
            \quad \norm{\Delta \vX} \leq \bigO{\eps} \norm{\vX}, \\
            & \norm{I - \vQbar^T \vQbar} \leq \bigO{\eps} \kappa^{\alpha_1}(\vX);
        \end{split}
    \end{equation*}
    and
    \begin{equation*}
        \begin{split}
            & \vX + \Delta \vX = \vQbar \Rbar,
            \quad \norm{\Delta \vX} \leq \bigO{\eps} \norm{\vX}, \\
            & \norm{I - \vQbar^T \vQbar} \leq \bigO{\eps} \kappa^{\alpha_2}(\vX).
        \end{split}
    \end{equation*}
    If $\bigO{\eps} \kappa^{\theta}(\bXX) < 1$ for $\theta:= \max(\alpha_1, 1)$ is satisfied, then
    \begin{equation} \label{eq:thm:bcgsiroa:res}
        \bXX + \Delta\bXX = \bQQbar \RRbar,
        \quad \norm{\Delta\bXX} \leq \bigO{\eps} \norm{\bXX},
    \end{equation}
    and
    \begin{equation}
        \norm{I - \bQQbar^T \bQQbar} \leq \bigO{\eps}.
    \end{equation}
\end{theorem}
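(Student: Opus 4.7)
The plan is to mirror the two-stage inductive strategy used in the proof of Theorem~\ref{thm:bcgs}: first establish the backward residual bound \eqref{eq:thm:bcgsiroa:res} by induction on the block index $k$, and then use it as an input to a second induction that propagates the LOO bound via Lemma~\ref{lem:bcgsiroa-stepk}. The base case $k=1$ is immediate in both instances from the assumption on $\IOAnoarg$, which yields $\vX_1 + \Delta\vX_1 = \bQQbar_1 R_{11}$ with $\norm{\Delta\vX_1} \leq \bigO{\eps}\norm{\vX_1}$ together with $\omega_1 := \norm{I - \bQQbar_1^T\bQQbar_1} \leq \bigO{\eps}$.

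For the residual induction step, I would assume $\bXX_{k-1} + \Delta\bXX_{k-1} = \bQQbar_{k-1}\RRbar_{k-1}$ with $\norm{\Delta\bXX_{k-1}} \leq \bigO{\eps}\norm{\bXX_{k-1}}$ and expand $\vX_k$ as $\bQQbar_{k-1}\RRbar_{1:k-1,k} + \vQbar_k \Rbar_{kk} + \Delta\vX_k$ by telescoping the floating-point relations of Lemma~\ref{lem:bcgsiro-property}, namely \eqref{eq:lem-bcgsiro-property-SS}, \eqref{eq:lem-bcgsiro-property-orth1}, \eqref{eq:lem-bcgsiro-property-ETT}, \eqref{eq:lem-bcgsiro-property-orth2}, \eqref{eq:lem-bcgsiro-property-ERR}, and the bound on $\Delta\Rbar_{kk}$. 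After the exactly-subtracted projection terms $\bQQbar_{k-1}\bQQbar_{k-1}^T\vX_k$ and $\bQQbar_{k-1}\bQQbar_{k-1}^T\vUbar_k$ cancel, $\Delta\vX_k$ becomes a sum of perturbations each bounded by $\bigO{\eps}\norm{\vX_k}$, yielding $\norm{\Delta\bXX_k} \leq \bigO{\eps}\norm{\bXX_k}$. The standing hypothesis $\bigO{\eps}\kappa^\theta(\bXX) < 1$ with $\theta = \max(\alpha_1,1)$ ensures that the technical condition \eqref{eq:lem-bcgsiro-property:assump} is satisfied at every step with $\epsXkp = \bigO{\eps}$.

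For the LOO induction, suppose $\omega_{k-1} \leq \bigO{\eps}$. With the residual bound now in hand, Lemma~\ref{lem:bcgsiroa-stepk} applies and yields
\[
    \omega_k \leq \omega_{k-1} + 2\omega_{k-1}(1+\omega_{k-1})(\omega_{k-1} + \bigO{\eps})\kappa(\bXX_k) + \bigO{\eps}.
\]
The decisive observation is that the middle term is of order $\bigO{\eps^2}\kappa(\bXX_k)$, which under $\bigO{\eps}\kappa(\bXX) < 1$ is absorbed into $\bigO{\eps}$. Hence $\omega_k \leq \bigO{\eps}$, and the induction closes with a bound that is independent of $k$ (and hence of $p$).

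The main obstacle, and the essential payoff of the inner reorthogonalization, lies precisely in this quadratic dependence of the propagation term on $\omega_{k-1}$ in Lemma~\ref{lem:bcgsiroa-stepk}, as opposed to the merely linear dependence in Lemma~\ref{lem:bcgs-kinnerloop}. Without the second projection stage, one power of $\kappa(\bXX)$ would accumulate at each iteration, producing the $\kappa(\bXX)^{p-1}$-type exponents of Table~\ref{tab:LOO-bcgsa}; with reorthogonalization, the quadratic in $\omega_{k-1}$ cancels exactly one power of $\kappa$, leaving an $\bigO{\eps}$ remainder uniformly in $p$. A secondary technical nuisance is verifying that the preconditions of Lemma~\ref{lem:bcgsiro-property} are maintained at every $k$, which is exactly what motivates the choice $\theta = \max(\alpha_1, 1)$ in the hypothesis.
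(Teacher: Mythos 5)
Your proposal is correct and follows essentially the same route as the paper's proof: a residual induction via the floating-point relations of Lemma~\ref{lem:bcgsiro-property}, followed by an LOO induction via Lemma~\ref{lem:bcgsiroa-stepk} in which the propagation term $\bigO{1}\,\omega_{k-1}^2\kappa(\bXX_k) \leq \bigO{\eps^2}\kappa(\bXX_k)$ is absorbed into $\bigO{\eps}$ under the hypothesis on $\kappa(\bXX)$. Your closing remark correctly identifies the quadratic dependence on $\omega_{k-1}$ as the decisive mechanism distinguishing this result from Theorem~\ref{thm:bcgs}.
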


\begin{proof}
    By the assumption of $\IOAnoarg$, we have $\norm{I - \bQQbar_1^T \bQQbar_1} \leq \bigO{\eps}$.  Then we can draw the conclusion by induction on $k$ followed by Lemma~\ref{lem:bcgsiroa-stepk} if the residual bound~\eqref{eq:thm:bcgsiroa:res} can be satisfied.

    The assumptions of $\IOAnoarg$ directly give the base case.  Assume that $\bXX_{k-1} + \Delta \bXX_{k-1} = \bQQbar_{k-1} \RRbar_{k-1}$ with $\norm{\Delta \bXX_{k-1}} \leq \bigO{\eps} \norm{\bXX_{k-1}}$.  Then our aim is to prove that it holds for $k$.  By Lemma~\ref{lem:bcgsiro-property}, we have
    \begin{equation}
        \begin{split}
            & \bQQbar_{k-1} \RRbar_{1:k-1,k} + \vQbar_k \Rbar_{kk} \\
            & = \bQQbar_{k-1} \left(\bQQbar_{k-1}^T \vX_k + \Delta\SSbar_{1:k-1,k}
                + (\bQQbar_{k-1}^T \vUbar_k + \Delta\bar{\TT}_{1:k-1,k}) \Sbar_{kk} + \Delta\RRbar_{1:k-1,k} \right)\\            
                & \quad + \vQbar_k(\Tbar_{kk} \Sbar_{kk} + \Delta\Rbar_{kk}) \\
            & = \bQQbar_{k-1} \left(\bQQbar_{k-1}^T \vX_k + \Delta\SSbar_{1:k-1,k}
                + (\bQQbar_{k-1}^T \vUbar_k + \Delta\bar{\TT}_{1:k-1,k}) \Sbar_{kk} + \Delta\RRbar_{1:k-1,k} \right)\\
                & \quad + (\vHtil_k + \Delta\vHtil_k) \Sbar_{kk} + \vQbar_k \Delta\Rbar_{kk} \\
            & = \bQQbar_{k-1} \left(\bQQbar_{k-1}^T \vX_k + \Delta\SSbar_{1:k-1,k}
                + (\bQQbar_{k-1}^T \vUbar_k + \Delta\bar{\TT}_{1:k-1,k}) \Sbar_{kk} + \Delta\RRbar_{1:k-1,k} \right)\\
                & \quad + ((I - \bQQbar_{k-1} \bQQbar_{k-1}^T) \vUbar_k + \Delta\vHtil_k) \Sbar_{kk} + \vQbar_k \Delta\Rbar_{kk} \\
            & = \bQQbar_{k-1} \left(\bQQbar_{k-1}^T \vX_k + \Delta\SSbar_{1:k-1,k}
                + \Delta\bar{\TT}_{1:k-1,k} \Sbar_{kk} + \Delta\RRbar_{1:k-1,k} \right)+ (\vUbar_k+ \Delta\vHtil_k) \Sbar_{kk}\\
                & \quad + \vQbar_k \Delta\Rbar_{kk} \\
            & = \bQQbar_{k-1} \bQQbar_{k-1}^T \vX_k
                + \vVtil_k + \Delta\vVtil_k
                + \Delta\vHtil_k \Sbar_{kk} + \vQbar_k \Delta\Rbar_{kk}
                + \bQQbar_{k-1} \Delta\SSbar_{1:k-1,k}\\
                & \quad + \bQQbar_{k-1} \Delta\bar{\TT}_{1:k-1,k} \Sbar_{kk}
                + \bQQbar_{k-1} \Delta\RRbar_{1:k-1,k} \\
            & = \vX_k+ \Delta\vVtil_k
                + \Delta\vHtil_k \Sbar_{kk} + \vQbar_k \Delta\Rbar_{kk}
                + \bQQbar_{k-1} \Delta\SSbar_{1:k-1,k}
                + \bQQbar_{k-1} \Delta\bar{\TT}_{1:k-1,k} \Sbar_{kk} \\
                & \quad + \bQQbar_{k-1} \Delta\RRbar_{1:k-1,k}.
        \end{split}
    \end{equation}
    Let
    \begin{align*}
        \Delta\vX_k & = \Delta\vVtil_k
            + \Delta\vHtil_k\Sbar_{kk} + \vQbar_k \Delta\Rbar_{kk}
            + \bQQbar_{k-1} \Delta\SSbar_{1:k-1,k} \\
            & \quad + \bQQbar_{k-1} \Delta\bar{\TT}_{1:k-1,k} \Sbar_{kk} 
            + \bQQbar_{k-1} \Delta\RRbar_{1:k-1,k}.
    \end{align*}
    Then can we conclude the proof of the residual because
    \begin{equation*}
        \begin{split}
            \bXX_k + \Delta \bXX_k 
            & = \bmat{\bXX_{k-1} + \Delta \bXX_{k-1} & \vX_k+ \Delta\vX_k} \\
            & = \bmat{\bQQbar_{k-1} \RRbar_{k-1} & \bQQbar_{k-1} \RRbar_{1:k-1,k} + \vQbar_k \Rbar_{kk}}
        \end{split}
    \end{equation*}
    with
    \[
        \norm{\Delta \bXX_k} \leq \bigO{\eps} \norm{\bXX_{k-1}} + \bigO{\eps} \norm{\vX_k}
        \leq \bigO{\eps} \norm{\bXX_k}.
    \]

    Next we aim to prove the LOO using Lemma~\ref{lem:bcgsiroa-stepk}.  The assumptions of $\IOAnoarg$ directly give the base case for the LOO, i.e., $\omega_1\leq \bigO{\eps}$.
    Assume that $\epsQkp\leq \bigO{\eps}$. Then we obtain that \eqref{eq:lem-bcgsiro-property:assump} holds.
    Using Lemma~\ref{lem:bcgsiroa-stepk} and the assumption of $\IOonenoarg$ and $\IOtwonoarg$ we conclude the proof because
    \begin{equation*}
        \begin{split}
            \norm{I - \bQQbar_k^T \bQQbar_k}
            &\leq \epsQkp + \bigO{1}\epsQkp^2\kappa(\bXX_{k}) + \bigO{\eps}
            \leq \bigO{\eps}.
        \end{split}
    \end{equation*}
\end{proof}

Theorem~\ref{thm:bcgsiroa} reproduces the main result of \cite{BarS13}, which analyzes $\BCGSIRO$, or equivalently $\BCGSIROA$ with $\IOAnoarg = \IOonenoarg = \IOtwonoarg$ in our nomenclature.  Barlow and Smoktunowicz require that all \IOnoargs be as stable as \HouseQR.  In contrast, Theorem~\ref{thm:bcgsiroa} shows that the choice of $\IOtwonoarg$ has no effect on the LOO of $\BCGSIROA$, while $\IOonenoarg$ only limits the conditioning of $\bXX$ for which we can guarantee $\bigO{\eps}$ LOO.  Recently, Barlow proved a similar result for special cases of \BCGSIROA, where $\IOAnoarg = \HouseQR$, $\IOonenoarg$ is either $\HouseQR$ or a reorthogonalized \CholQR, and $\IOtwonoarg = \CholQR$ \cite{Bar24}.  Indeed, Theorem~\ref{thm:bcgsiroa} generalizes \cite{Bar24} and reveals additional possibilities that would further reduce the number of sync points.  Consider, for example, \BCGSIROA with $\IOAnoarg = \IOonenoarg = \TSQR$ and $\IOtwonoarg = \CholQR$.  Such an algorithm would only need 4 sync points per block column, as all \IOnoargs need only one global communication, and still achieve $\bigO{\eps}$ LOO without any additional restriction on $\kappa(\bXX)$.

Unfortunately, Theorem~\ref{thm:bcgsiroa} cannot guarantee stability for $\BCGSIROA \circ \CholQR$ (i.e., $\IOonenoarg = \IOtwonoarg = \CholQR$) when $\kappa(\bXX) > \frac{1}{\sqrt{\eps}} \approx 10^8$, because then $\theta = 2$.  Figure~\ref{fig:roadmap_2_piled} shows $\BCGSIROA \circ \CholQR$ deviating from $\bigO{\eps}$ after $\kappa(\bXX) > 10^8$ for a class of \piled\footnote{Formed as $\bXX = \bmat{\vX_1 & \vX_2 & \cdots & \vX_p}$, where $\vX_1$ has small condition number and for $k \in \{2,\ldots,p\}$, $\vX_k = \vX_{k-1} + \vZ_k$, where each $\vZ_k$ has the same condition number for all $k$. Toggling the condition numbers of $\vX_1$ and $\{\vZ_k\}_{k=2}^p$ controls the overall conditioning of the test matrix.} matrices, which are designed to highlight such edge-case behavior. At the same time, the LOO for $\BCGSIRO\circ\CholQR$ is even more extreme; cf. Figure~\ref{fig:roadmap_2} as well.  Practically speaking, if the application can tolerate $\kappa(\bXX) \leq 10^8$, $\BCGSIROA \circ \CholQR$ would be the superior algorithm here, as \CholQR only requires one sync point per block vector.

\begin{figure}[htbp!]
	\begin{center}
	    \begin{tabular}{cc}
	         \resizebox{.355\textwidth}{!}{\includegraphics[trim={0 0 190pt 0},clip]{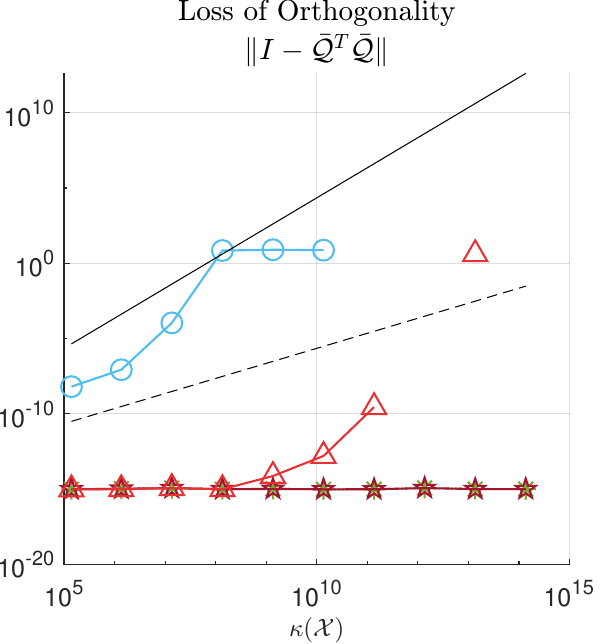}} &
	         \resizebox{.58\textwidth}{!}{\includegraphics{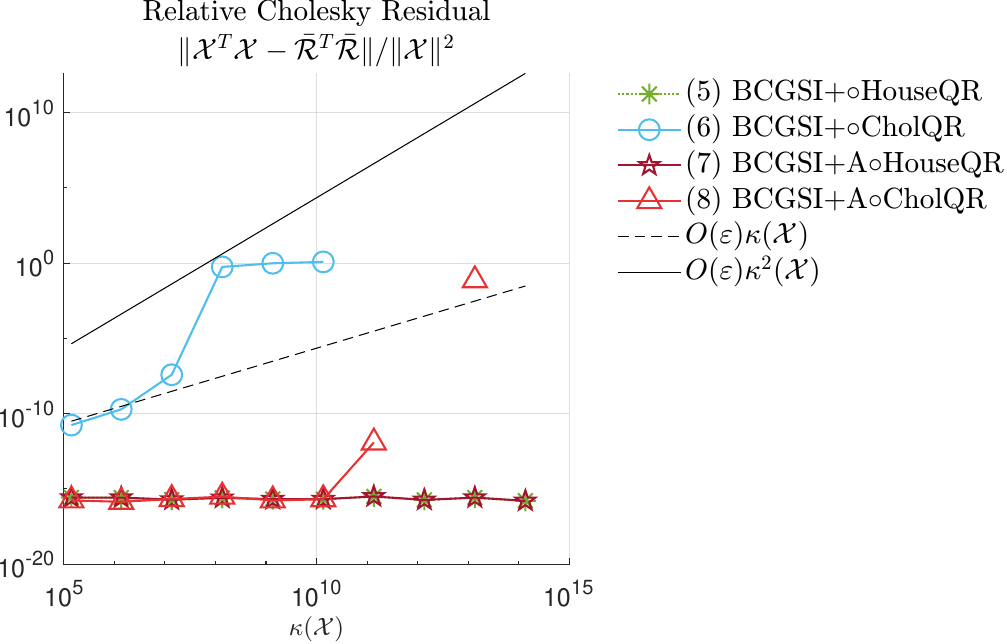}}
	    \end{tabular}
	\end{center}
	\caption{Comparison between \BCGSIRO (i.e., Algorithm~\ref{alg:BCGSIROA} with all \IOnoargs equal) and \BCGSIROA ($\IOAnoarg = \HouseQR$ and $\IOonenoarg = \IOtwonoarg$) on a class of \piled matrices. \label{fig:roadmap_2_piled}}
\end{figure}
\section{Derivation of a one-sync, reorthogonalized, block Gram-Schmidt method} \label{sec:roadmap}
\BCGSIRO and its counterpart \BCGSIROA both require 4 sync points per iteration, which is a disadvantage compared to \BCGS/\BCGSA, which can have demonstrably worse LOO than $\bigO{\eps} \kappa^2(\bXX)$.  Ideally, we would like to reduce the sync points to 2 or even 1 per iteration while keeping a small LOO.  DCGS2 from \cite{BieLTetal22} boasts both 1 sync point per column as well as $\bigO{\eps}$ LOO, at least according to their numerical experiments.  With an eye towards achieving 1 sync point per block column, we will generalize and adapt their derivation, starting from \BCGSIROA. We note that \BCGSIROAthree and \BCGSIROAtwo are considered to be intermediate steps towards the derivation of \BCGSIROAone and its theory. This section aims to show how to achieve a one-sync algorithm using the four-sync approach and to illustrate the impact of reducing each sync point on stability. Unfortunately, \BCGSIROAone has instability issues when $s>1$, that is, its LOO is affected by the square of the condition number, as will be shown in Section~\ref{sec:ls_proofs}. Consequently, this block variant (and thus the equivalent block variant proposed in \cite[Figure~3]{YamTHetal20}) is not the best choice for practical for use. We note that the deficiencies of this variant motivated the development of a stable one-sync variant in~\cite{CarMa2025}.

We can eliminate one sync point by skipping the first normalization step, denoted as $k.1.2$ in \BCGSIROA; consequently, we drop the distinction between $\IOonenoarg$ and $\IOtwonoarg$.  This leads to \BCGSIROAthree, summarized as Algorithm~\ref{alg:BCGSIROA3S}; note again the three colors for each phase.  Despite the small change-- after all, \BCGSIROAthree still projects the basis twice and normalizes the projected block vector in step $k.2.2$-- the effect on the stability behavior is notable.  Figure~\ref{fig:roadmap_3} demonstrates a small LOO when \HouseQR is the \IOnoarg and up to $\bigO{\eps}\kappa^2(\bXX)$ for when $\IOnoarg = \CholQR$.  Furthermore, the relative Cholesky residual of \BCGSIROAthree begins to increase as well.  $\BCGSIROAthree\circ\CholQR$ in particular cannot handle $\kappa(\bXX) > \bigO{\frac{1}{\sqrt{\eps}}}$, due to the Cholesky subroutine being applied to negative semidefinite matrices at some point.
\begin{algorithm}[htbp!]
	\caption{$[\bQQ, \RR] = \BCGSIROAthree(\bXX, \IOAnoarg, \IOnoarg)$ \label{alg:BCGSIROA3S}}
	\begin{algorithmic}[1]
		\State{Allocate memory for $\bQQ$, $\RR$} 
		\State{$[\vQ_1, R_{11}] = \IOA{\vX_1}$} 
		\For{$k = 2, \ldots,p$}
		    \State \first{$\SS_{1:k-1,k} = \bQQ_{k-1}^T \vX_k$}
             \label{line:bcgsiroa3s-proj-begin}
             \SComment{step k.1.1 -- first projection}
		    \State \first{$\vV_k = \vX_k - \bQQ_{k-1} \SS_{1:k-1,k}$} \SComment{\textbf{step k.1.2' -- skip normalization}}
		    \State \second{$\YY_{1:k-1,k} = \bQQ_{k-1}^T \vV_k$}  \label{line:bcgsiroa3s-proj-end}
             \SComment{step k.2.1 -- second projection}
		    \State \second{$[\vQ_k, Y_{kk}] = \IO{\vV_k - \bQQ_{k-1} \YY_{1:k-1,k}}$} \SComment{step k.2.2 -- second normalization}
		    \State \combo{$\RR_{1:k-1,k} = \SS_{1:k-1,k} + \YY_{1:k-1,k}$} \SComment{step k.3.1 -- form upper $\RR$ column}
		    \State \combo{$R_{kk} = Y_{kk}$} \SComment{step k.3.2 -- form $\RR$ diagonal entry}
		\EndFor
		\State \Return{$\bQQ = [\vQ_1, \ldots, \vQ_p]$, $\RR = (R_{ij})$}
	\end{algorithmic}
\end{algorithm}

\begin{figure}[htbp!]
	\begin{center}
	    \begin{tabular}{cc}
	         \resizebox{.325\textwidth}{!}{\includegraphics[trim={0 0 210pt 0},clip]{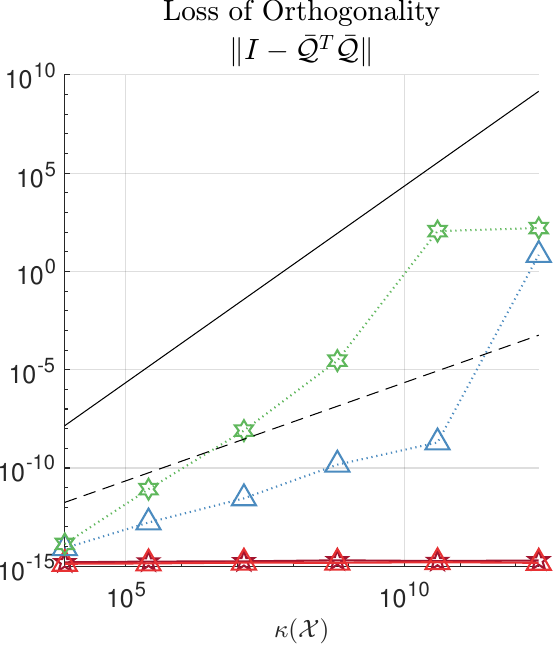}} &
	         \resizebox{.58\textwidth}{!}{\includegraphics{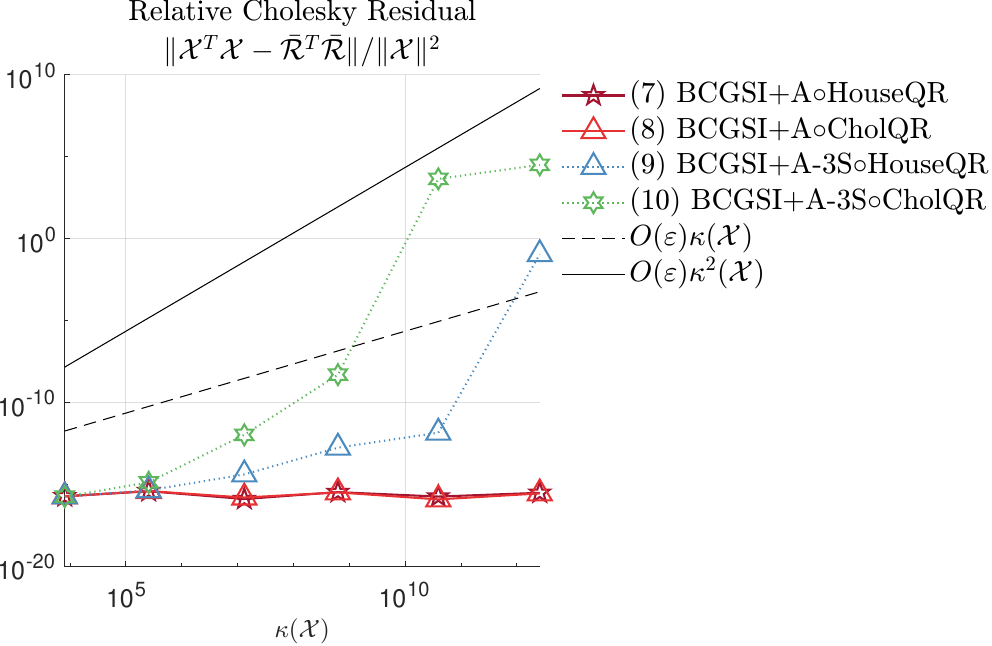}}
	    \end{tabular}
	\end{center}
    \caption{Comparison between \BCGSIROA and \BCGSIROAthree on a class of \monomial matrices. Note that $\IOAnoarg$ is fixed as \HouseQR, and $\IOnoarg = \IOonenoarg = \IOtwonoarg$. \label{fig:roadmap_3}}
\end{figure}

\BCGSIROAthree can also be interpreted as a generalization of the ``continuous projection" approach introduced in \cite{Zou23} as \BCGSCP. The only real difference is that we allow for \texttt{stable\_qr} (i.e., the choice of \IOnoarg) to differ between the first and subsequent steps, thus adding a little extra flexibility.  Zou does not carry out a floating-point analysis of \BCGSCP, which we do in Section~\ref{sec:BCGSIROA3S}

By fixing the \IOnoarg to be \CholQR for iterations $2$ through $p$ and batching the inner products, we arrive at a 2-sync method, \BCGSIROAtwo, displayed in Algorithm~\ref{alg:BCGSIROA2S}.  The block Pythagorean theorem (cf.~\cite{CarLR21}) could also be used to derive this step, but we take a more straightforward approach here by just expanding the inner product implicit in $[\vQ_k, Y_{kk}] = \CholQR(\vV_k - \bQQ_{k-1} \YY_{1:k-1,k})$ (and keeping in mind that we're still working in exact arithmetic for the derivation).  In particular, we find that
\begin{align*}
    Y_{kk}
    &= \vV_k^T\vV_k - \underbrace{\vV_k^T \bQQ_{k-1}}_{= \YY_{1:k-1,k}^T} \YY_{1:k-1,k} - \YY_{1:k-1,k}^T \underbrace{\bQQ_{k-1}^T \vV_k}_{= \YY_{1:k-1,k}} + \YY_{1:k-1,k}^T \underbrace{\bQQ_{k-1}^T \bQQ_{k-1}}_{= I} \YY_{1:k-1,k} \\
    &= \underbrace{\vV_k^T\vV_k}_{= \Omega_k} - \YY_{1:k-1,k}^T \YY_{1:k-1,k}
\end{align*}
and $\vQ_k = \left( \vV_k - \bQQ_{k-1} \YY_{1:k-1,k} \right) Y_{kk}^\inv$.

We assume that batching itself introduces only $\bigO{\eps}$ errors for the matrix products, with the exact floating-point error depending on how the hardware and low-level libraries handle matrix-matrix multiplication.  In which case, $\BCGSIROAtwo$ can be regarded as equivalent to $\BCGSIROAthree\circ\CholQR$ in floating-point error.

\begin{algorithm}[htbp!]
    \caption{$[\bQQ, \RR] = \BCGSIROAtwo(\bXX, \IOAnoarg)$ \label{alg:BCGSIROA2S}}
    \begin{algorithmic}[1]
        \State{$[\vQ_1, R_{11}] = \IOA{\vX_1}$} 
        \For{$k = 2, \ldots,p$}
            \State \first{$\SS_{1:k-1,k} = \bQQ_{k-1}^T \vX_k$}  \SComment{step k.1.1 -- first projection}
            \State \first{$\vV_k = \vX_k - \bQQ_{k-1} \SS_{1:k-1,k}$} \SComment{step k.1.2' -- skip normalization}
            \State \second{$\begin{bmatrix} \YY_{1:k-1,k} \\ \Omega_k \end{bmatrix} = \begin{bmatrix}\bQQ_{k-1} & \vV_k \end{bmatrix}^T \vV_k$} \label{line:bcgsiroa2s-QR-begin}  \SComment{\textbf{step k.2.1' -- second projection, part of \CholQR}}
            \State \second{$Y_{kk} = \chol(\Omega_k - \YY_{1:k-1,k}^T \YY_{1:k-1,k})$} \label{line:bcgsiroa2s-chol}
            \State \second{$\vQ_k = (\vV_k - \bQQ_{k-1} \YY_{1:k-1,k}) Y_{kk}^\inv$} \label{line:bcgsiroa2s-QR-end} \SComment{step k.2.2 -- second normalization}
            \State \combo{$\RR_{1:k-1,k} = \SS_{1:k-1,k} + \YY_{1:k-1,k}$} \SComment{step k.3.1 -- form upper $\RR$ column}
            \State \combo{$R_{kk} = Y_{kk}$} \SComment{step k.3.2 -- form $\RR$ diagonal entry}
        \EndFor
        \State \Return{$\bQQ = [\vQ_1, \ldots, \vQ_p]$, $\RR = (R_{ij})$}
    \end{algorithmic}
\end{algorithm}

Deriving \BCGSIROAone (Algorithm~\ref{alg:BCGSIROA1S}) from \BCGSIROAtwo requires shifting the window over which the for-loop iterates.  First, we bring out steps 2.1.1 and 2.1.2, which leaves $\vV_2$ and $S_{12}$ to initialize the for-loop.  We then batch all the inner products and define some intermediate quantities $\vZ_{k-1}$ and $P_k$.  Consequently, we cannot compute $\SS_{1:k,k+1}$ directly but rather have to reverse-engineer it from what has been computed in line~\ref{line:batched_IP}:
\begin{align*}
     \SS_{1:k,k+1}
     & = \bQQ_k^T \vX_{k+1}\\
     & = \begin{bmatrix} \bQQ_{k-1}  & \vQ_k \end{bmatrix}^T \vX_{k+1}\\
     & = \begin{bmatrix} \vZ_{k-1}   & \vQ_k^T \vX_{k+1} \end{bmatrix}
\end{align*}
By line~\ref{line:Qk}, we have $\vQ_k$, but not its projection onto $\vX_{k+1}$, which we cannot get until the next iteration.  However, from the same line, we can compute $\vQ_k^T \vX_{k+1}$, as it is composed of pieces that can be pulled from line~\ref{line:batched_IP}:
\begin{equation*}
    \vQ_k^T \vX_{k+1}
    = Y_{kk}^\tinv \left( \underbrace{\vV_k^T \vX_{k+1}}_{=P_k} - \YY_{1:k-1,k}^T \underbrace{\bQQ_{k-1}^T \vX_{k+1}}_{=\vZ_{k-1}} \right).
\end{equation*}
After the loop we have to complete the final step $p$.  Interestingly, note that $\bXX$ is no longer needed for the final inner product in line~\ref{alg:BCGSIROA1S:final_IP}.  We highlight again the colorful chaos of the pseudocode: it helps to illustrate what Bielich et al.~\cite{BieLTetal22} and \'{S}wirodowicz et al.~\cite{SwiLAetal21} called ``lagging", in the sense that ``earlier" calculations in \first{blue} now take place after the ``later" ones in \second{red} and \combo{purple} within the for-loop.

\begin{algorithm}[htbp!]
    \caption{$[\bQQ, \RR] = \BCGSIROAone(\bXX, \IOAnoarg)$ \label{alg:BCGSIROA1S}}
    \begin{algorithmic}[1]
        \State{$[\vQ_1, R_{11}] = \IOA{\vX_1}$} \SComment{Implicitly, initialize $\SS = I_{ps}$}
        \State \first{$S_{12} = \vQ_1^T \vX_2$}  \SComment{step 2.1.1 -- first projection}
        \State \first{$\vV_2 = \vX_2 - \vQ_1 S_{12}$} \SComment{step 2.1.2' -- skip normalization}
        \For{$k = 2, \ldots,p-1$}
            \State {$\begin{bmatrix} \second{\YY_{1:k-1,k}} & \first{\vZ_{k-1}} \label{line:bcgsiroa1s:proj-addit}\\ \second{\Omega_k} & \first{P_k} \end{bmatrix} =
                \begin{bmatrix}\bQQ_{k-1} & \vV_k \end{bmatrix}^T \begin{bmatrix}\vV_k & \vX_{k+1} \end{bmatrix}$}
                \SComment{\textbf{step k.2.1', part of (k+1).1.1'}} \label{line:batched_IP}
            \State \second{$Y_{kk} = \chol(\Omega_k - \YY_{1:k-1,k}^T \YY_{1:k-1,k})$}
            \State \second{$\vQ_k = (\vV_k - \bQQ_{k-1} \YY_{1:k-1,k}) Y_{kk}^\inv$} \SComment{step k.2.2 -- second normalization} \label{line:Qk}
            \State \combo{$\RR_{1:k-1,k} = \SS_{1:k-1,k} + \YY_{1:k-1,k}$} \label{line:bcgsiroa1s:proj-begin}\SComment{step k.3.1 -- form upper $\RR$ column}
            \State \combo{$R_{kk} = Y_{kk}$} \SComment{step k.3.2 -- form $\RR$ diagonal entry}
            \State \first{$\SS_{1:k,k+1} =
                \begin{bmatrix} \vZ_{k-1} \\
                Y_{kk}^\tinv \left( P_k - \YY_{1:k-1,k}^T \vZ_{k-1} \right) \end{bmatrix}$}
                \SComment{\textbf{step (k+1).1.1' -- reverse-engineer $\SS$}}
            \State \first{$\vV_{k+1} = \vX_{k+1} - \bQQ_k \SS_{1:k,k+1}$} \label{line:bcgsiroa1s:proj-end}\SComment{step (k+1).1.2' -- skip normalization}
        \EndFor
        \State \second{$\begin{bmatrix} \YY_{1:p-1,p} \\ \Omega_p \end{bmatrix} =
            \begin{bmatrix}\bQQ_{p-1} & \vV_p \end{bmatrix}^T \vV_p$}
            \SComment{step p.2.1'} \label{alg:BCGSIROA1S:final_IP}
        \State \second{$Y_{pp} = \chol(\Omega_p - \YY_{1:p-1,p}^T \YY_{1:p-1,p})$}
        \State \second{$\vQ_p = (\vV_p - \bQQ_{p-1} \YY_{1:p-1,p}) Y_{pp}^\inv$} \SComment{step p.2.2 -- second normalization}
        \State \combo{$\RR_{1:p-1,p} = \SS_{1:p-1,p} + \YY_{1:p-1,p}$} \SComment{step p.3.1 -- form upper $\RR$ column}
        \State \combo{$R_{pp} = Y_{pp}$} \SComment{step p.3.2 -- form $\RR$ diagonal entry}
        \State \Return{$\bQQ = [\vQ_1, \ldots, \vQ_p]$, $\RR = (R_{ij})$}
    \end{algorithmic}
\end{algorithm}

A comparison of the 3-sync, 2-sync, and 1-sync variants is provided in Figure~\ref{fig:roadmap_4}.  \BCGSIROAthree remains under $\bigO{\eps} \kappa^2(\bXX)$, while both \BCGSIROAtwo and \BCGSIROAone explode dramatically once $\kappa(\bXX) > 10^9$.

\begin{figure}[htbp!]
	\begin{center}
	    \begin{tabular}{cc}
	         \resizebox{.325\textwidth}{!}{\includegraphics[trim={0 0 205pt 0},clip]{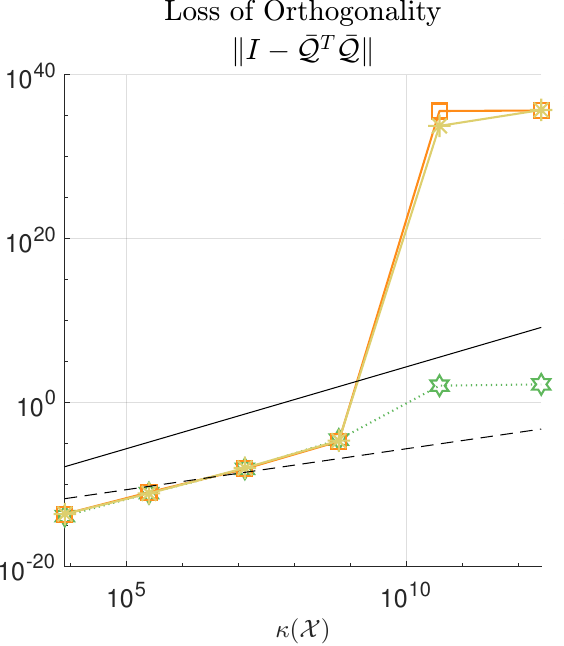}} &
	         \resizebox{.58\textwidth}{!}{\includegraphics{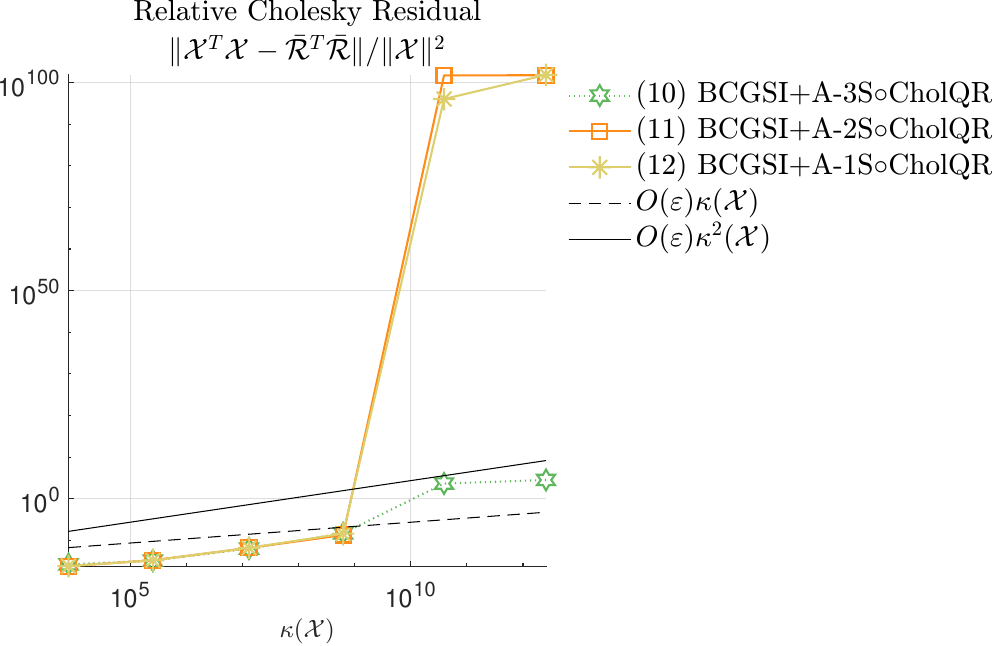}}
	    \end{tabular}
	\end{center}
    \caption{Comparison among low-sync versions of \BCGSIROA on a class of \monomial matrices. Note that $\IOAnoarg$ is fixed as \HouseQR. \label{fig:roadmap_4}}
\end{figure}

This version of 1-sync \BCGSIROA is aesthetically quite different from \cite[Algorithm~7]{CarLRetal22} or \cite[Figure~3]{YamTHetal20}, as well as the column-wise versions of \cite{BieLTetal22} and \cite{SwiLAetal21}.  For one, a general $\IOAnoarg$ is used in the first step.  However, the core of the algorithm-- i.e., everything in the for-loop-- is fundamentally the same, up to $\bigO{\eps}$ rounding errors.  Our derivation for \BCGSIROAone provides an alternative perspective from just writing out the first few steps of the for-loop, batching the inner products, and reverse-engineering the next column of $\SS$ from the most recently computed inner product.

The \monomial example used in Figures~\ref{fig:roadmap_1}-\ref{fig:roadmap_4}-- which are combined in Figure~\ref{fig:roadmap_monomial}-- paints a pessimistic picture for methods with reduced sync points.  The \monomial matrices are not especially extreme matrices; they are in fact designed to mimic $s$-step Krylov subspace methods and are built from powers of a well-conditioned operator.  There are certainly cases where \BCGSIROAone may be good enough; see Figure~\ref{fig:roadmap_default} for comparisons on the \texttt{default} matrices, which are built by explicitly defining a singular value decomposition from diagonal matrices with logarithmically spaced entries.  Clearly all methods discussed so far appear more stable than \BCGS on these simple matrices.
\begin{figure}[htbp!]
	\begin{center}
	    \begin{tabular}{cc}
	         \resizebox{.33\textwidth}{!}{\includegraphics[trim={0 0 210pt 0},clip]{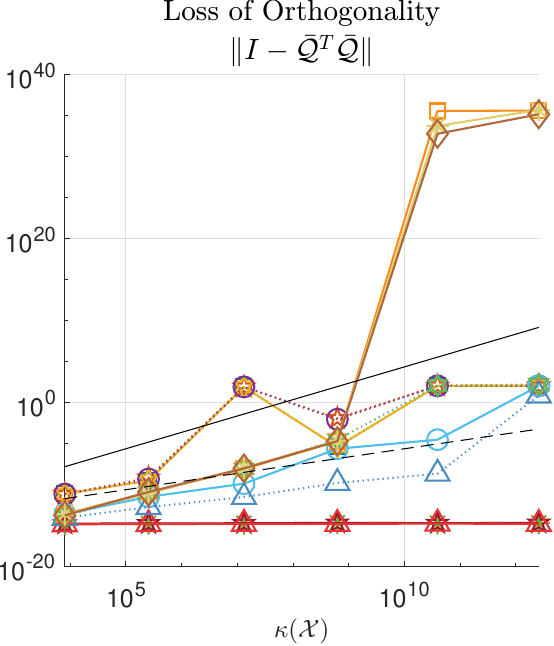}} &
	         \resizebox{.58\textwidth}{!}{\includegraphics{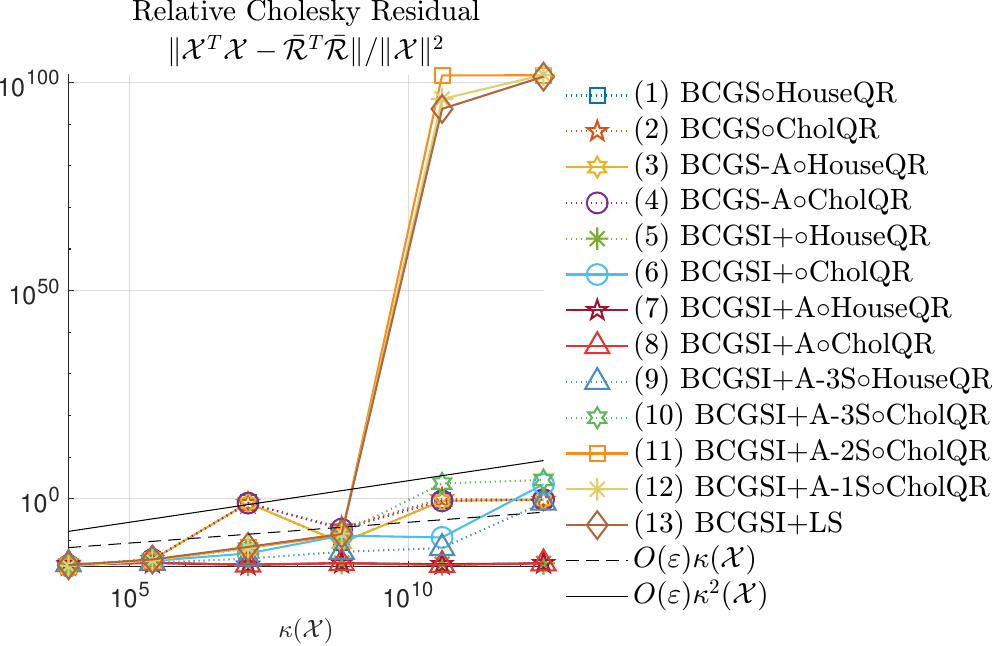}}
	    \end{tabular}
	\end{center}
    \caption{Comparison among \BCGS, \BCGSIRO, \BCGSIROA, and low-sync variants thereof on \monomial matrices. \BCGSIROLS is Algorithm~7 from \cite{CarLRetal22}. Note that $\IOAnoarg$ is fixed as \HouseQR in \texttt{BlockStab}. \label{fig:roadmap_monomial}}
\end{figure}

\begin{figure}[htbp!]
	\begin{center}
	    \begin{tabular}{cc}
	         \resizebox{.33\textwidth}{!}{\includegraphics[trim={0 0 210pt 0},clip]{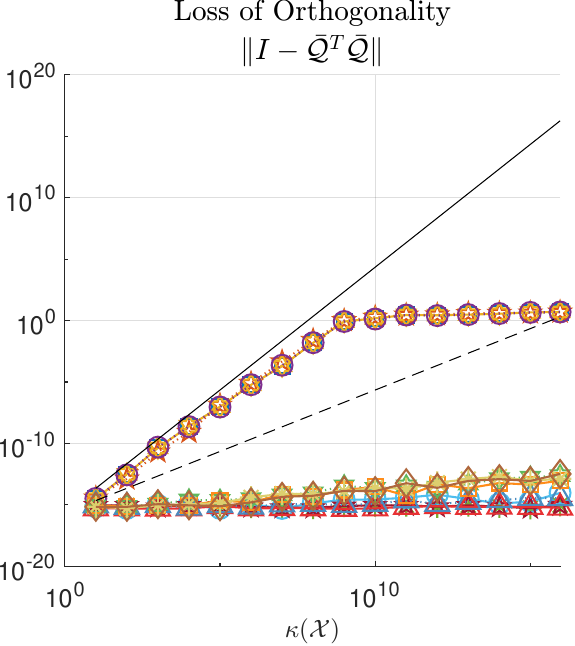}} &
	         \resizebox{.58\textwidth}{!}{\includegraphics{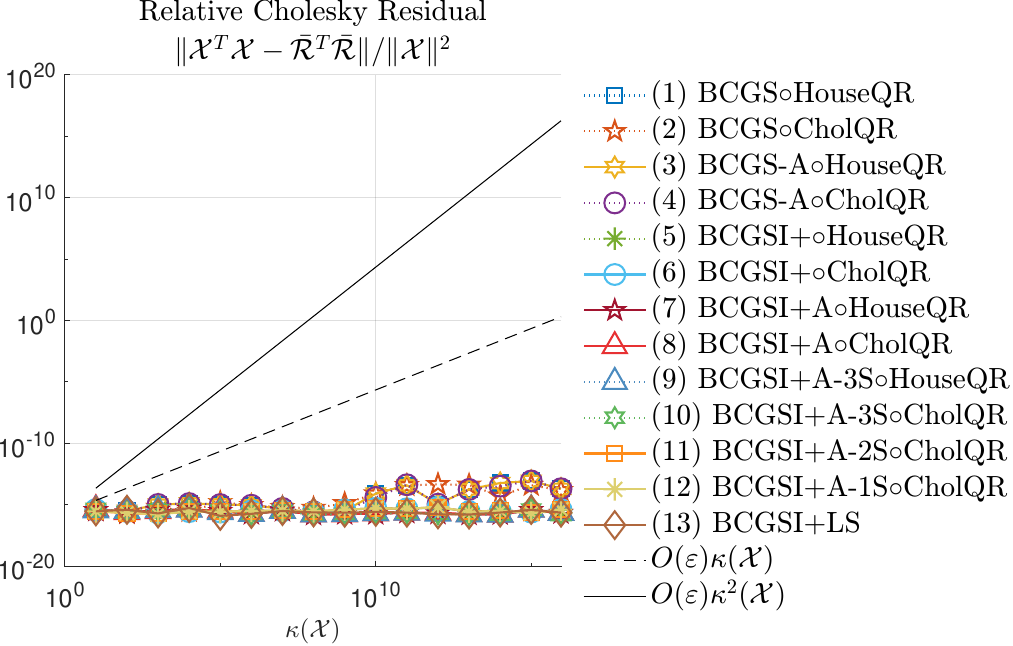}}
	    \end{tabular}
	\end{center}
    \caption{Comparison among \BCGS, \BCGSIRO, \BCGSIROA, and low-sync variants thereof on \texttt{default} matrices. \BCGSIROLS is Algorithm~7 from \cite{CarLRetal22}. Note that $\IOAnoarg$ is fixed as \HouseQR in \texttt{BlockStab}. \label{fig:roadmap_default}}
\end{figure}

We remind the reader that Figures~\ref{fig:roadmap_1}-\ref{fig:roadmap_default} are all generated by the MATLAB script \texttt{test\_roadmap.m} in \texttt{BlockStab}, which we recommend downloading and interacting with for a better understanding of the nuances of these BGS variants.
\section{Loss of orthogonality of low-sync versions of \texttt{BCGSI+A}} \label{sec:ls_proofs}
Figures~\ref{fig:roadmap_3}-\ref{fig:roadmap_4} reinforce the challenges of proving tight upper bounds for the LOO of Algorithms~\ref{alg:BCGSIROA3S}-\ref{alg:BCGSIROA1S}.  None of the variants has a small relative Cholesky residual, meaning that we cannot rely on the technique from \cite{CarLR21}, which inducts over the $\RR$ factor to obtain an LOO bound. We also know that the LOO can be worse than $\bigO{\eps} \kappa^2(\bXX)$, for $\bigO{\eps} \kappa^2(\bXX) \leq 1$.  However, we can still employ the general framework from Section~\ref{sec:framework} to prove some insightful bounds.

\subsection{\texttt{BCGSI+A-3S}} \label{sec:BCGSIROA3S}
For Algorithm~\ref{alg:BCGSIROA3S}, the projection and intraortho stage can be written respectively as
\begin{equation}
    \vG = \Proj{\vX, \bQQ} := (I - \bQQ \bQQ^T)(I - \bQQ \bQQ^T)\vX
    \quad\text{and}\quad \QR{\vG} := \IO{\vG}. 
\end{equation}
Specifically, for the $k$th inner loop of Algorithm~\ref{alg:BCGSIROA3S},
\begin{align}
    & \vW_k = \Proj{\vX_k, \bQQ_{k-1}} = (I - \bQQ_{k-1} \bQQ_{k-1}^T)(I - \bQQ_{k-1} \bQQ_{k-1}^T) \vX_k, \label{eq:two-proj} \\
    &[\vQ_k, R_{kk}] = \QR{\vW_k} = \IO{\vW_k}. \label{eq:bcgsiroa3s-orth}
\end{align}
Then we define
\begin{equation} \label{eq:definition-Wtilk}
    \vWtil_k = (I - \bQQbar_{k-1} \bQQbar_{k-1}^T)(I - \bQQbar_{k-1} \bQQbar_{k-1}^T) \vX_k,
\end{equation}
where $\bQQbar_{k-1}$ satisfies \eqref{eq:def-Vtilk-epsQkp}.
Furthermore, $\vWbar_k$ denotes the computed result of $\vWtil_k$.

In analogue to the analysis of \BCGSA, we first estimate $\sigmin(\vWtil_k)$ and $\kappa(\vWtil_k)$ in Lemma~\ref{lem:norm-Wk1}, and then analyze the specific $\epsproj := \epsprojbcgsiroathree_k$ satisfying $\norm{\vWbar_k - \vWtil_k} \leq \epsprojbcgsiroathree_k$ for the $k$th inner loop and $\norm{\bQQbar_{k-1}^T \vWtil_k \Rbar_{kk}^\inv}$ that are related only to the projection stage in Lemma~\ref{lem:quantities-two-projection}.

\begin{lemma} \label{lem:norm-Wk1}
    Let $\vWtil_k$ and $\bQQbar_{k-1}$ satisfy~\eqref{eq:definition-Wtilk} and~\eqref{eq:def-Vtilk-epsQkp}.
    Assume that
    \begin{equation} \label{eq:lem-norm-Wk1:assump}
        \bXX_{k-1} + \Delta \bXX_{k-1} = \bQQbar_{k-1} \RRbar_{k-1},
        \quad \norm{\Delta \bXX_{k-1}}
        \leq \epsXkp \norm{\bXX_{k-1}}
    \end{equation}
    with $\epsXkp \kappa(\bXX_k)<1$ and $\RRbar_{k-1}$ nonsingular.
    Then for any $k \geq 2$
    \begin{align}
        & \norm{\vWtil_k} \leq \bigl(1 + \epsQkp + (1 + \epsQkp)^2 \epsQkp\bigr) \norm{\vX_k}, \\
        & \sigmin(\vWtil_k) \geq \sigmin(\bXX_k) - \epsXkp \norm{\bXX_{k-1}}, \mbox{ and}\\
        & \kappa(\vWtil_k) \leq \frac{\bigl(1 + \epsQkp + (1 + \epsQkp)^2 \epsQkp\bigr) \kappa(\bXX_k)}{1 - \epsXkp \kappa(\bXX_k)}.
    \end{align}
\end{lemma}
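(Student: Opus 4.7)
The plan is to reduce the two-projection object $\vWtil_k$ to the one-projection object $\vVtil_k := (I-\bQQbar_{k-1}\bQQbar_{k-1}^T)\vX_k$ treated in Lemma~\ref{lem:norm-Wk0}, so that the same style of argument applies with only one extra perturbation term to track.

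\textbf{Norm bound.} First I would write $\vWtil_k = \vVtil_k - \bQQbar_{k-1}\bigl(\bQQbar_{k-1}^T \vVtil_k\bigr)$ and use the algebraic identity
\[
    \bQQbar_{k-1}^T \vVtil_k = \bigl(I - \bQQbar_{k-1}^T\bQQbar_{k-1}\bigr)\bQQbar_{k-1}^T \vX_k.
\]
Using \eqref{eq:def-Vtilk-epsQkp} and \eqref{eq:Qkp-2norm}, this immediately gives $\norm{\bQQbar_{k-1}\bQQbar_{k-1}^T\vVtil_k} \leq (1+\epsQkp)^2 \epsQkp \norm{\vX_k}$. Combining with the norm bound on $\vVtil_k$ already established in \eqref{lem:norm-Wk0:normVtil} produces the claimed estimate on $\norm{\vWtil_k}$.

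\textbf{Minimum singular value bound.} This is the main obstacle, because naively using the triangle inequality on the two projections would lose a factor of $\epsQkp$ on each pass and yield a much weaker bound. The key is to notice that $\vWtil_k$ still has the form $\vX_k - \bQQbar_{k-1}\vy$ for some $\vy \in \spR^{(k-1)s \times s}$ (in fact $\vy = \bQQbar_{k-1}^T\vX_k + \bQQbar_{k-1}^T\vVtil_k$). Setting $\vz := \RRbar_{k-1}^{-1}\vy$ and using the assumption \eqref{eq:lem-norm-Wk1:assump}, I can write
\[
    \vWtil_k = \vX_k - (\bXX_{k-1}+\Delta\bXX_{k-1})\vz = \bXX_k \begin{bmatrix} -\vz \\ I \end{bmatrix} - \Delta\bXX_{k-1}\vz.
\]
For any unit vector $\vh$, since $\bigl\lVert\bigl[\begin{smallmatrix}-\vz\vh\\ \vh\end{smallmatrix}\bigr]\bigr\rVert \geq \norm{\vh} = 1$ and $\norm{\vz\vh} \leq \bigl\lVert\bigl[\begin{smallmatrix}-\vz\vh\\ \vh\end{smallmatrix}\bigr]\bigr\rVert$, the standard singular value inequality yields
\[
    \norm{\vWtil_k \vh} \geq \bigl(\sigmin(\bXX_k) - \epsXkp\norm{\bXX_{k-1}}\bigr)\bigl\lVert\bigl[\begin{smallmatrix}-\vz\vh\\ \vh\end{smallmatrix}\bigr]\bigr\rVert \geq \sigmin(\bXX_k) - \epsXkp\norm{\bXX_{k-1}},
\]
which is exactly the desired bound. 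Note that this argument does not depend on how many times $\bQQbar_{k-1}\bQQbar_{k-1}^T$ has been applied, only on the fact that the residual lies in $\vX_k + \ran(\bQQbar_{k-1})$; this is why the two-projection bound matches the one-projection bound of Lemma~\ref{lem:norm-Wk0}.

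\textbf{Condition number bound.} This follows immediately by dividing the norm estimate by the singular value estimate and then rewriting the ratio $\norm{\vX_k}/\sigmin(\bXX_k)$ as $\kappa(\bXX_k)$, using the hypothesis $\epsXkp\kappa(\bXX_k) < 1$ to keep the denominator positive. The expected main difficulty is the $\sigmin$ bound; once it is established in the form above, both the norm and condition number estimates are short routine calculations parallel to those in the proof of Lemma~\ref{lem:norm-Wk0}.
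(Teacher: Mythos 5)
Your proposal is correct and follows essentially the same route as the paper: both proofs bound $\norm{\vWtil_k}$ by splitting off the extra term $\bQQbar_{k-1}(I-\bQQbar_{k-1}^T\bQQbar_{k-1})\bQQbar_{k-1}^T\vX_k$ from $\vVtil_k$, and both obtain the $\sigmin$ bound by writing $\vWtil_k = \bmat{\bXX_{k-1}+\Delta\bXX_{k-1} & \vX_k}\vD^T$ with identity in the trailing block (your $\vz=\RRbar_{k-1}^{-1}\vy$ is exactly the paper's $\vD$). Your write-up merely spells out the unit-vector singular-value argument that the paper delegates to the citation of Lemma~\ref{lem:norm-Wk0} and \cite[Eq.~(39)]{CarLMetal24a}.
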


\begin{proof}
    Similarly to the proof of Lemma~\ref{lem:norm-Wk0}, by the assumption~\eqref{eq:lem-norm-Wk1:assump} and
    \begin{equation}
        \begin{split}
            \vWtil_k
            & = \vX_k - \bQQbar_{k-1} \RRbar_{k-1} \RRbar_{k-1}^\inv(\bQQbar_{k-1}^T+ \bQQbar_{k-1}^T(I - \bQQbar_{k-1} \bQQbar_{k-1}^T)) \vX_k \\
            & = \bmat{\bXX_{k-1} + \Delta \bXX_{k-1}& \vX_k} \vD^T
        \end{split}
    \end{equation}
    with $\vD:= \bmat{- \left(\RRbar_{k-1}^\inv(\bQQbar_{k-1}^T+ \bQQbar_{k-1}^T(I - \bQQbar_{k-1} \bQQbar_{k-1}^T)) \vX_k \right)^T& I}$,
    we obtain
    \begin{equation} \label{eq:tildeW-smallest-sva}
        \begin{split}
            \sigmin(\vWtil_k)
            & \geq \sigmin(\bXX_k) - \epsXkp \norm{\bXX_{k-1}}
            \geq \sigmin(\bXX_k) - \epsXkp \norm{\bXX_k}.
        \end{split}
    \end{equation}
    Combining \eqref{eq:tildeW-smallest-sva} with \eqref{eq:lem:norm-I-QQT} and
    \begin{equation}
        \begin{split}
            \norm{\vWtil_k}
            & \leq \left(\norm{I - \bQQbar_{k-1} \bQQbar_{k-1}^T}
            + \norm{\bQQbar_{k-1}}^2 \norm{I - \bQQbar_{k-1}^T \bQQbar_{k-1}} \right) \norm{\vX_k} \\
            & \leq \bigl(1 + \epsQkp + (1 + \epsQkp)^2 \epsQkp\bigr) \norm{\vX_k},
        \end{split}
    \end{equation}
    we have
    \begin{equation}
        \kappa(\vWtil_k)
        \leq \frac{\bigl(1 + \epsQkp + (1 + \epsQkp)^2 \epsQkp\bigr) \kappa(\bXX_k)}{1 - \epsXkp \kappa(\bXX_k)}.
    \end{equation}
\end{proof}

\begin{lemma} \label{lem:quantities-two-projection}
    Let $\vWtil_k$ and $\bQQbar_{k-1}$ satisfy~\eqref{eq:definition-Wtilk} and~\eqref{eq:def-Vtilk-epsQkp}, and $\vWbar_k$ be the computed result of $\vWtil_k$.  For the projection stage~\eqref{eq:two-proj} computed by lines~\ref{line:bcgsiroa3s-proj-begin}--\ref{line:bcgsiroa3s-proj-end} in Algorithm \ref{alg:BCGSIROA3S}, with any $k \geq 2$, it holds that
    \begin{equation*}
        \begin{split}
            & \norm{\vWbar_k - \vWtil_k}\leq \epsprojbcgsiroathree_k \leq \bigO{\eps} \norm{\vWtil_k} + \bigO{\eps^2} \norm{\vX_k} + \bigO{\eps} \epsQkp \norm{\vX_k}, \\
            & \norm{\bQQbar_{k-1}^T \vWtil_k \Rbar_{kk}^\inv}
            \leq(1 + \epsQkp) \epsQkp^2 \norm{\vX_k} \norm{\Rbar_{kk}^\inv}.
        \end{split}
    \end{equation*}
\end{lemma}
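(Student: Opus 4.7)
}

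The plan is to treat the two parts separately: the error bound on the computed double projection $\vWbar_k$, and the algebraic identity that yields the factor $\epsQkp^2$. For both, the key mechanism is the same as in Lemma~\ref{lem:quantities-one-projection}, namely that multiplying by $\bQQbar_{k-1}$ (or $\bQQbar_{k-1}^T$) on an already-projected quantity produces a factor $(I - \bQQbar_{k-1}^T\bQQbar_{k-1})$, whose norm is controlled by $\epsQkp$. What is new here is that we have a second projection, so this mechanism applies twice for the bound on $\norm{\bQQbar_{k-1}^T \vWtil_k \Rbar_{kk}^\inv}$.

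For the first bound, I would introduce standard floating-point error terms for each of the four matrix operations comprising the projection stage in lines~\ref{line:bcgsiroa3s-proj-begin}--\ref{line:bcgsiroa3s-proj-end}:
\begin{align*}
    \SSbar_{1:k-1,k} &= \bQQbar_{k-1}^T \vX_k + E_S, & \norm{E_S} &\leq \bigO{\eps}\norm{\vX_k}, \\
    \vVbar_k &= \vX_k - \bQQbar_{k-1}\SSbar_{1:k-1,k} + E_V, & \norm{E_V} &\leq \bigO{\eps}\norm{\vX_k}, \\
    \YYbar_{1:k-1,k} &= \bQQbar_{k-1}^T \vVbar_k + E_Y, & \norm{E_Y} &\leq \bigO{\eps}\norm{\vVbar_k}, \\
    \vWbar_k &= \vVbar_k - \bQQbar_{k-1}\YYbar_{1:k-1,k} + E_W, & \norm{E_W} &\leq \bigO{\eps}(\norm{\vVbar_k} + \norm{\YYbar_{1:k-1,k}}).
\end{align*}
Combining these and subtracting $\vWtil_k = (I-\bQQbar_{k-1}\bQQbar_{k-1}^T)\vVtil_k$, I expect to obtain
\[
    \vWbar_k - \vWtil_k = \bQQbar_{k-1}(\bQQbar_{k-1}^T\bQQbar_{k-1} - I)E_S + (I - \bQQbar_{k-1}\bQQbar_{k-1}^T)E_V - \bQQbar_{k-1}E_Y + E_W,
\]
after using the identity $(I - \bQQbar_{k-1}\bQQbar_{k-1}^T)\bQQbar_{k-1} = \bQQbar_{k-1}(I - \bQQbar_{k-1}^T\bQQbar_{k-1})$ on the $E_S$ contribution. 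The first term is then $\bigO{\eps}\epsQkp\norm{\vX_k}$ by~\eqref{eq:def-Vtilk-epsQkp}, while $\norm{\YYbar_{1:k-1,k}} \leq \norm{\bQQbar_{k-1}^T\vVtil_k} + \bigO{\eps}\norm{\vX_k}$ is itself $\bigO{\epsQkp + \eps}\norm{\vX_k}$ because $\bQQbar_{k-1}^T\vVtil_k = (I - \bQQbar_{k-1}^T\bQQbar_{k-1})\bQQbar_{k-1}^T\vX_k$. The remaining $(I - \bQQbar_{k-1}\bQQbar_{k-1}^T)E_V$ and $E_W$ terms scale with $\norm{\vVbar_k}$; since $\vVbar_k = \vVtil_k + \bigO{\eps}\norm{\vX_k}$ and $\vWtil_k$ differs from $\vVtil_k$ only by $\bigO{\epsQkp}\norm{\vX_k}$, these can be absorbed into $\bigO{\eps}\norm{\vWtil_k} + \bigO{\eps}\epsQkp\norm{\vX_k}$ plus a higher-order $\bigO{\eps^2}\norm{\vX_k}$ crossterm.

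For the second bound, I would exploit the operator identity $\bQQbar_{k-1}^T(I - \bQQbar_{k-1}\bQQbar_{k-1}^T) = (I - \bQQbar_{k-1}^T\bQQbar_{k-1})\bQQbar_{k-1}^T$ and apply it twice: writing $P := I - \bQQbar_{k-1}\bQQbar_{k-1}^T$, we have
\[
    \bQQbar_{k-1}^T \vWtil_k = \bQQbar_{k-1}^T P P \vX_k = (I - \bQQbar_{k-1}^T\bQQbar_{k-1})^2 \bQQbar_{k-1}^T \vX_k,
\]
so that $\norm{\bQQbar_{k-1}^T \vWtil_k} \leq \epsQkp^2 \norm{\bQQbar_{k-1}} \norm{\vX_k} \leq (1+\epsQkp)\epsQkp^2 \norm{\vX_k}$ by~\eqref{eq:Qkp-2norm} and~\eqref{eq:def-Vtilk-epsQkp}. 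Right-multiplying by $\Rbar_{kk}^\inv$ and applying a submultiplicative bound yields the stated inequality. The main obstacle in the whole argument is the bookkeeping in part~(a): keeping track of which error contributions collapse via the $(I - \bQQbar_{k-1}^T\bQQbar_{k-1})$ identity versus which survive at full $\bigO{\eps}$ size, and recognizing that the surviving terms can be re-expressed using $\norm{\vWtil_k}$ rather than $\norm{\vX_k}$ because $\vVtil_k$ lies nearly in the range of $P$.
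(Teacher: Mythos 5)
Your proposal is correct and takes essentially the same route as the paper: the second bound is obtained by exactly the same double application of the commutation identity $\bQQbar_{k-1}^T(I - \bQQbar_{k-1}\bQQbar_{k-1}^T) = (I - \bQQbar_{k-1}^T\bQQbar_{k-1})\bQQbar_{k-1}^T$, and the first bound by propagating the stage-one and stage-two rounding errors through the double projection and then trading $\norm{\vVtil_k}$ for $\norm{\vWtil_k} + (1+\epsQkp)^2\epsQkp\norm{\vX_k}$. The only differences are cosmetic --- the paper invokes Lemma~\ref{lem:quantities-one-projection} twice rather than unrolling your four error terms $E_S, E_V, E_Y, E_W$ --- and the one spot where you are loose (asserting that the surviving $(I-\bQQbar_{k-1}\bQQbar_{k-1}^T)E_V$ contribution, whose natural bound is $\bigO{\eps}\norm{\vX_k}$, can be absorbed into $\bigO{\eps}\norm{\vWtil_k}+\bigO{\eps}\epsQkp\norm{\vX_k}+\bigO{\eps^2}\norm{\vX_k}$) is precisely the spot where the paper's own proof is equally loose in bounding $\norm{\Delta\vWtil_k}$ by $\bigO{\eps}\norm{\vVbar_k}$, so you match it in both substance and level of rigor.
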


\begin{proof}
    By Lemma~\ref{lem:quantities-one-projection}, we have
    \begin{equation} \label{eq:Wk0bar}
    \begin{split}
        & \vVbar_k = \vVtil_k + \Delta\vVtil_k,
        \quad \norm{\Delta\vVtil_k} \leq \bigO{\eps} \norm{\vX_k}, \\
        & \vWbar_k = \vWtil_k + \Delta\vWtil_k,
        \quad \norm{\Delta\vWtil_k} \leq \bigO{\eps} \norm{\vVbar_k}
        \leq \bigO{\eps} \norm{\vVtil_k} + \bigO{\eps^2} \norm{\vX_k}
    \end{split}
    \end{equation}
    with the definition~\eqref{eq:definition-Vtilk} of $\vVtil_k$. Noticing that with
    \[
        \vWtil_k = \vVtil_k - \bQQbar_{k-1}(I - \bQQbar_{k-1}^T \bQQbar_{k-1}) \bQQbar_{k-1}^T \vX_k,
    \]
    we bound $\epsprojbcgsiroathree_k$ because
    \begin{equation*}
        \norm{\vVtil_k} \leq \norm{\vWtil_k} +(1 + \epsQkp)^2\epsQkp \norm{\vX_k}.
    \end{equation*}
    Then by~\eqref{eq:definition-Wtilk}, $\norm{\bQQbar_{k-1}^T \vWtil_k \Rbar_{kk}^\inv}$ is bounded as follows:
    \begin{align*}
        \norm{\bQQbar_{k-1}^T \vWtil_k \Rbar_{kk}^\inv}
        &= \norm{\bQQbar_{k-1}^T \left(I - \bQQbar_{k-1} \bQQbar_{k-1}^T \right) \left(I - \bQQbar_{k-1} \bQQbar_{k-1}^T \right) \vX_k \Rbar_{kk}^\inv} \\
        & \leq \norm{\left(I - \bQQbar_{k-1}^T \bQQbar_{k-1} \right) \left(I - \bQQbar_{k-1}^T \bQQbar_{k-1} \right) \bQQbar_{k-1}^T \vX_k \Rbar_{kk}^\inv} \\
        & \leq \norm{I - \bQQbar_{k-1}^T \bQQbar_{k-1}}^2
        \norm{\bQQbar_{k-1}} \norm{\vX_k} \norm{\Rbar_{kk}^\inv} \\
        & \leq(1 + \epsQkp) \epsQkp^2\norm{\vX_k} \norm{\Rbar_{kk}^\inv}.
    \end{align*}
\end{proof}

The following lemma analyzes the behavior of the $k$th inner loop of $\BCGSIROAthree$.

\begin{lemma} \label{lem:bcgsiroa3s-kinnerloop}
    Assume that $\bigO{\eps} \kappa(\bXX_k) \leq\frac{1}{2}$, $\bQQbar_{k-1}$ satisfies~\eqref{eq:def-Vtilk-epsQkp},
    and
    \[
        \bXX_{k-1} + \Delta \bXX_{k-1} = \bQQbar_{k-1} \RRbar_{k-1, k-1},
        \quad\norm{\Delta \bXX_{k-1}}
        \leq \bigO{\eps} \norm{\bXX_{k-1}}.
    \]
    Then for the $k$th inner loop of Algorithm~\ref{alg:BCGSIROA3S} with any $k \geq 2$:
    \begin{equation*}
        \norm{I - \bQQbar_k^T \bQQbar_k}
        \leq \epsQkp + \bigO{\eps} \kappa(\bXX_k) +4(1 + \epsQkp) \epsQkp^2 (1 + \epsQ) \kappa(\bXX_k) + \epsQ.
    \end{equation*}
\end{lemma}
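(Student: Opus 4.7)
The plan is to instantiate the abstract framework of Lemma~\ref{lem:epsQk-relation} with the specific $\Proj$ and $\QR$ stages of Algorithm~\ref{alg:BCGSIROA3S} defined in~\eqref{eq:two-proj}--\eqref{eq:bcgsiroa3s-orth}, then substitute the quantitative estimates from Lemmas~\ref{lem:norm-Wk1} and~\ref{lem:quantities-two-projection}. First I would verify that the hypotheses of Lemma~\ref{lem:epsQk-relation} are met. Setting $\vGtil = \vWtil_k$, Lemma~\ref{lem:norm-Wk1} with $\epsXkp = \bigO{\eps}$ yields $\sigmin(\vWtil_k) \geq \sigmin(\bXX_k) - \bigO{\eps}\norm{\bXX_{k-1}}$ and $\kappa(\vWtil_k) \leq \bigO{1}\,\kappa(\bXX_k)$. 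Combined with Lemma~\ref{lem:quantities-two-projection}, which gives $\epsprojbcgsiroathree_k \leq \bigO{\eps}\norm{\vX_k}$ after absorbing the $\bigO{\eps^2}$ and $\bigO{\eps}\epsQkp$ contributions into the leading term, the condition $\tfrac{(1+\epsqr)\epsproj}{\sigmin(\vWtil_k)} + \epsqr\kappa(\vWtil_k) < 1$ follows directly from the assumption $\bigO{\eps}\kappa(\bXX_k) \leq \tfrac{1}{2}$.

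Next I would invoke~\eqref{eq:lem:epsQk-relation:epsQk-relation} from Lemma~\ref{lem:epsQk-relation}, which gives
\begin{equation*}
    \norm{I - \bQQbar_k^T \bQQbar_k}
    \leq \epsQkp + 2\norm{\bQQbar_{k-1}^T \vWtil_k \Rbar_{kk}^\inv} + \epsQ
    + \frac{2(1+\epsQkp)(1+\epsQ)\bigl(\epsqr\kappa(\vWtil_k) + (1+\epsqr)\epsproj/\sigmin(\vWtil_k)\bigr)}{1 - (1+\epsqr)\epsproj/\sigmin(\vWtil_k) - \epsqr\kappa(\vWtil_k)}.
\end{equation*}
The last summand is the routine contribution: with $\epsqr = \bigO{\eps}$ from the intraortho and the above estimates on $\sigmin(\vWtil_k)$ and $\kappa(\vWtil_k)$, together with $\bigO{\eps}\kappa(\bXX_k) \leq \tfrac{1}{2}$ making the denominator bounded below by a constant, this fraction simplifies to $\bigO{\eps}\kappa(\bXX_k)$, matching the $\bigO{\eps}\kappa(\bXX_k)$ term in the claimed bound.

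The key step is then to sharpen $\norm{\bQQbar_{k-1}^T \vWtil_k \Rbar_{kk}^\inv}$ using the second inequality of Lemma~\ref{lem:quantities-two-projection}, namely $\norm{\bQQbar_{k-1}^T \vWtil_k \Rbar_{kk}^\inv} \leq (1+\epsQkp)\epsQkp^2\norm{\vX_k}\norm{\Rbar_{kk}^\inv}$. To finish, I would apply the $\norm{\Rbar_{kk}^\inv}$ bound from Lemma~\ref{lem:epsQk-relation},
\begin{equation*}
    \norm{\Rbar_{kk}^\inv} \leq \frac{1+\epsQ}{\sigmin(\vWtil_k) - (1+\epsqr)\epsproj - \epsqr\norm{\vWtil_k}},
\end{equation*}
and combine with $\sigmin(\vWtil_k) \geq \sigmin(\bXX_k) - \bigO{\eps}\norm{\bXX_k}$ and $\epsproj, \epsqr\norm{\vWtil_k} = \bigO{\eps}\norm{\vX_k}$. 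The assumption $\bigO{\eps}\kappa(\bXX_k) \leq \tfrac{1}{2}$ again ensures the denominator exceeds $\tfrac{1}{2}\sigmin(\bXX_k)$, so $\norm{\vX_k}\norm{\Rbar_{kk}^\inv} \leq 2(1+\epsQ)\kappa(\bXX_k)$. Substituting back yields $2\norm{\bQQbar_{k-1}^T \vWtil_k \Rbar_{kk}^\inv} \leq 4(1+\epsQkp)\epsQkp^2(1+\epsQ)\kappa(\bXX_k)$, which produces exactly the dominant nonlinear term in the statement.

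The main obstacle, as always in this framework, is propagating the projection error through the denominator of $\norm{\Rbar_{kk}^\inv}$ without losing the correct power of $\kappa(\bXX_k)$; in particular, the use of Lemma~\ref{lem:quantities-two-projection} to extract a quadratic $\epsQkp^2$ rather than a linear $\epsQkp$ from the double projection is what distinguishes \BCGSIROAthree from \BCGSA, and must be preserved through the substitutions. Everything else is bookkeeping once the estimates of Lemmas~\ref{lem:norm-Wk1} and~\ref{lem:quantities-two-projection} are plugged into~\eqref{eq:lem:epsQk-relation:epsQk-relation}.
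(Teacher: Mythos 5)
Your proposal is correct and follows essentially the same route as the paper: both instantiate the abstract bound of Lemma~\ref{lem:epsQk-relation} with $\vGtil = \vWtil_k$, feed in the double-projection estimates of Lemma~\ref{lem:quantities-two-projection} (in particular the quadratic $\epsQkp^2$ factor) and the bounds on $\sigmin(\vWtil_k)$, $\kappa(\vWtil_k)$ from Lemma~\ref{lem:norm-Wk1}, and then use $\bigO{\eps}\kappa(\bXX_k) \leq \tfrac{1}{2}$ to control the denominators and arrive at the stated terms. Your treatment of $\norm{\Rbar_{kk}^\inv}$ and the resulting factor $4(1+\epsQkp)\epsQkp^2(1+\epsQ)\kappa(\bXX_k)$ matches the paper's calculation exactly.
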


\begin{proof}
    By Lemma~\ref{lem:epsQk-relation} and Lemma~\ref{lem:quantities-two-projection}, we have
    \begin{equation} \label{eq:lem:bcgsiroa3s-kinnerloop}
        \begin{split}
            \norm{I - \bQQbar_k^T \bQQbar_k}
            & \leq \epsQkp
            + \frac{\bigO{\eps} \kappa(\vWtil_k) + (\bigO{\eps^2} + \bigO{\eps} \epsQkp) \norm{\vX_k}/\sigmin(\vWtil_k)}{1- \bigO{\eps} \norm{\vX_k}/\sigmin(\vWtil_k)- \bigO{\eps} \kappa(\vWtil_k)} \\
            & + \frac{2 (1 + \epsQkp) \epsQkp^2 (1 + \epsQ) \norm{\vX_k}}{\sigmin(\vWtil_k)- \bigO{\eps} \norm{\vX_k} - \bigO{\eps} \norm{\vWtil_k}} + \epsQ.
        \end{split}
    \end{equation}
    Combining \eqref{eq:lem:bcgsiroa3s-kinnerloop} with Lemma~\ref{lem:norm-Wk1} and the assumption~$\bigO{\eps} \kappa(\bXX_k) \leq\frac{1}{2}$, we conclude the proof because
    \begin{equation}\label{eq:lem:bcgsiroa:epsQk}
        \begin{split}
            \norm{I - \bQQbar_k^T \bQQbar_k}
            & \leq \epsQkp
            + \frac{\bigO{\eps} \kappa(\vWtil_k) + (\bigO{\eps^2} + \bigO{\eps} \epsQkp)  \kappa(\bXX_k)}{1- \bigO{\eps} \kappa(\bXX_k)} \\
            & + \frac{2 (1 + \epsQkp) \epsQkp^2 (1 + \epsQ) \kappa(\bXX_k)}{1- \bigO{\eps} \kappa(\bXX_k)} + \epsQ.
        \end{split}
    \end{equation}
\end{proof}

By induction on $k$, we obtain the following theorem to show the loss of orthogonality of $\BCGSIROAthree$.

\begin{theorem} \label{thm:bcgsiroa3s}
    Let $\bQQbar$ and $\RRbar$ denote the computed results of Algorithm~\ref{alg:BCGSIROA3S}. Assume that for all $\vX \in \spR^{m \times s}$ with $\kappa(\vX) \leq \kappa(\bXX)$, the following hold for $[\vQbar, \Rbar] = \IOA{\vX}$:
    \begin{equation*}
        \begin{split}
            & \vX + \Delta \vX = \vQbar\Rbar,
            \quad \norm{\Delta \vX} \leq \bigO{\eps} \norm{\vX}, \\
            & \norm{I - \vQbar^T \vQbar} \leq \bigO{\eps} \kappa^{\alpha_A}(\vX),
        \end{split}
    \end{equation*}
    and for $[\vQbar, \Rbar] = \IO{\vX}$, it holds that
    \begin{equation*}
        \begin{split}
            & \vX + \Delta \vX = \vQbar\Rbar,
            \quad \norm{\Delta \vX} \leq \bigO{\eps} \norm{\vX}, \\
            & \norm{I - \vQbar^T \vQbar} \leq \bigO{\eps} \kappa^{\constQ}(\vX).
        \end{split}
    \end{equation*}
    If $\alpha_A\leq \constQ$ and $\bigO{\eps} \kappa^{\theta}(\bXX) \leq \frac{1}{2}$ with $\theta := \max(\constQ + 1, 2)$, is satisfied, then
    \[
        \bXX + \Delta\bXX = \bQQbar\RRbar,
        \quad\norm{\Delta\bXX} \leq \bigO{\eps} \norm{\bXX}
    \]
    and
    \begin{equation}
        \norm{I - \bQQbar^T \bQQbar} \leq \bigO{\eps} \left(\kappa(\bXX) \right)^{\max\{\constQ, 1\}}.
    \end{equation}
\end{theorem}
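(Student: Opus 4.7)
The plan is to follow the two-stage inductive structure used in the proof of Theorem~\ref{thm:bcgs}: first establish the relative residual bound, then use it as an assumption when inducting on $k$ for the loss of orthogonality via Lemma~\ref{lem:bcgsiroa3s-kinnerloop}. For the residual, the base case $k=1$ is immediate from the assumption on $\IOAnoarg$. For the inductive step, I would expand $\bmat{\bQQbar_{k-1}\RRbar_{k-1} & \bQQbar_{k-1}\RRbar_{1:k-1,k} + \vQbar_k\Rbar_{kk}}$ using lines~\ref{line:bcgsiroa3s-proj-begin}--\ref{line:bcgsiroa3s-proj-end} of Algorithm~\ref{alg:BCGSIROA3S}, the \IOnoarg bound~\eqref{eq:epsqr}, and Lemma~\ref{lem:quantities-two-projection}, producing a telescoping identity analogous to the one in Theorem~\ref{thm:bcgs}. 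The cross terms involving $(I - \bQQbar_{k-1}^T\bQQbar_{k-1})\bQQbar_{k-1}^T\vX_k$ are the only non-routine ones, and they are controlled by Lemma~\ref{lem:quantities-two-projection} together with the assumed relative residual for $\bXX_{k-1}$.

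For the loss of orthogonality, the base case uses the assumption on $\IOAnoarg$ and $\alpha_A \leq \constQ$ to get $\omega_1 \leq \bigO{\eps}\kappa^{\alpha_A}(\bXX_1) \leq \bigO{\eps}\kappa^{\max\{\constQ,1\}}(\bXX)$. For the inductive step, assume $\omega_{k-1} \leq \bigO{\eps}\kappa^{\max\{\constQ,1\}}(\bXX)$; since $\IO$ is applied to $\vWbar_k$, the assumption on $\IO$ combined with Lemma~\ref{lem:norm-Wk1} yields $\omega_Q \leq \bigO{\eps}\kappa^{\constQ}(\vWbar_k) \leq \bigO{\eps}\kappa^{\constQ}(\bXX_k)$. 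Substituting these into Lemma~\ref{lem:bcgsiroa3s-kinnerloop} gives four contributions: $\omega_{k-1}$, $\bigO{\eps}\kappa(\bXX_k)$, $4(1+\omega_{k-1})\omega_{k-1}^2(1+\omega_Q)\kappa(\bXX_k)$, and $\omega_Q$. Each needs to be reabsorbed into $\bigO{\eps}\kappa^{\max\{\constQ,1\}}(\bXX)$ to close the induction.

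The main obstacle is the quadratic-in-$\omega_{k-1}$ term: the naive bound $\omega_{k-1}^2\kappa(\bXX_k) \leq \bigO{\eps^2}\kappa^{2\max\{\constQ,1\}+1}(\bXX)$ appears to grow faster than the target $\bigO{\eps}\kappa^{\max\{\constQ,1\}}(\bXX)$, which is precisely where the hypothesis $\bigO{\eps}\kappa^{\theta}(\bXX) \leq \tfrac{1}{2}$ with $\theta := \max(\constQ+1,2) = \max\{\constQ,1\}+1$ is invoked. It rewrites the offending factor as $\omega_{k-1} \cdot \bigl(\bigO{\eps}\kappa^{\max\{\constQ,1\}+1}(\bXX)\bigr) \leq \tfrac{1}{2}\,\omega_{k-1}$, so this term is dominated by $\omega_{k-1}$. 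The same hypothesis also verifies the sub-assumption $\bigO{\eps}\kappa(\bXX_k) \leq \tfrac{1}{2}$ of Lemma~\ref{lem:bcgsiroa3s-kinnerloop} automatically since $\theta \geq 2 \geq 1$. Combining the four contributions then yields $\omega_k \leq \bigO{\eps}\kappa^{\max\{\constQ,1\}}(\bXX)$, closing the induction and producing the claimed LOO bound.
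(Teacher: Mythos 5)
Your proposal is correct and follows essentially the same two-stage induction as the paper's proof: residual bound first, then LOO via Lemma~\ref{lem:bcgsiroa3s-kinnerloop}, with the quadratic term $\epsQkp^2\kappa(\bXX_k)$ absorbed into $\epsQkp$ exactly as the paper does by invoking $\bigO{\eps}\kappa^{\theta}(\bXX)\leq\tfrac{1}{2}$ with $\theta=\max\{\constQ,1\}+1$. You correctly identified this absorption as the one non-routine step, so nothing further is needed.
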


\begin{proof}
    We only need to verify the assumptions of Lemma~\ref{lem:bcgsiroa3s-kinnerloop}, which are guaranteed by the assumption $\bigO{\eps} \kappa^{\theta}(\bXX) \leq \frac{1}{2}$ and the residual bound that we establish in the rest of the proof.

    The assumptions on $\IOAnoarg$ directly give the base case of the residual bound.
    Assume that $\bXX_{k-1} + \Delta \bXX_{k-1} = \bQQbar_{k-1} \RRbar_{k-1,k-1}$ with $\norm{\Delta \bXX_{k-1}}\leq \bigO{\eps}\norm{\bXX_{k-1}}$.
    By~\cite{Hig02} and the assumption on $\IOAnoarg$ and $\IOnoarg$, we obtain
    \begin{equation}
        \begin{split}
            \RRbar_{1:k-1,k} &= \SSbar_{1:k-1,k}  + \YYbar_{1:k-1,k} + \Delta \RRbar_{1:k-1,k}\\
            &= \bigl(\bQQbar_{k-1}^T \vX_k + \Delta \SSbar_{1:k-1,k}\bigr) + \bigl(\bQQbar_{k-1}^T \vVbar_k + \Delta \YYbar_{1:k-1,k}\bigr) + \Delta \RRbar_{1:k-1,k}\\
            &= \bQQbar_{k-1}^T \vX_k + \bQQbar_{k-1}^T \vVbar_k + \Delta \vE_{\RR},
        \end{split}
    \end{equation}
    where $\Delta \vE_{\RR} = \Delta \SSbar_{1:k-1,k} + \Delta \YYbar_{1:k-1,k} + \Delta \RRbar_{1:k-1,k}$ satisfies
    \begin{equation}\label{eq:bcgsiroathree:deltaRR}
        \norm{\Delta \vE_{\RR}} \leq \bigO{\eps} \norm{\vX_k}.
    \end{equation}
    Then recalling the definitions~\eqref{eq:definition-Vtilk} and~\eqref{eq:definition-Wtilk}, we have
    \begin{equation}\label{eq:bcgsiroathree:res-k-1}
        \begin{split}
            \bQQbar_{k-1} \RRbar_{1:k-1,k} + \vQbar_k \Rbar_{kk}
            &= \bQQbar_{k-1} \left(\bQQbar_{k-1}^T (\vX_k + \vVtil_k + \Delta \vVtil_k) + \Delta \vE_{\RR}\right) + \vWtil_k \\
            & \quad + \vEproj + \Delta\vG_k \\
            &= \vX_k + \bQQbar_{k-1} \bQQbar_{k-1}^T \Delta\vVtil_k
            + \bQQbar_{k-1} \Delta \vE_{\RR} \\
            & \quad + \vEproj + \Delta\vG_k,
        \end{split}
    \end{equation}
    where $\vEproj = \vWbar_k - \vWtil_k$ and $\Delta\vG_k$ satisfies $\vWbar_k + \Delta\vG_k = \vQbar_k \Rbar_{kk}$. Combining~\eqref{eq:bcgsiroathree:res-k-1} with~\eqref{eq:epsproj}, \eqref{eq:epsqr}, \eqref{eq:bcgsiroathree:deltaRR}, and Lemma~\ref{lem:quantities-two-projection}, we draw the conclusion followed by the proof of Theorem~\ref{thm:bcgs} because
    \begin{equation*}
        \begin{split}
            \bXX_k + \Delta \bXX_k 
            &= \bmat{\bXX_{k-1} + \Delta \bXX_{k-1}& \vX_k+ \Delta\vX_k} \\
            &= \bmat{\bQQbar_{k-1} \RRbar_{k-1,k-1}& \bQQbar_{k-1} \RRbar_{1:k-1,k} + \vQbar_k \Rbar_{kk}},
        \end{split}
    \end{equation*}
    and 
    \[
    \bQQbar_{k-1} \RRbar_{1:k-1,k} + \vQbar_k \Rbar_{kk}
    = \vX_k + \bQQbar_{k-1} \bQQbar_{k-1}^T \Delta \vVtil_k + \bQQbar_{k-1} \Delta \vE_{\RR} + \vEproj + \Delta \vG_k,
    \]
    where $\Delta \vX_k = \bQQbar_{k-1} \bQQbar_{k-1}^T \Delta \vVtil_k + \bQQbar_{k-1} \Delta \vE_{\RR} + \vEproj + \Delta \vG_k$ satisfies $\norm{\Delta \vX_k} \leq \bigO{\eps} \norm{\vX_k}$.  By induction on $k$, the residual bound has been proved.

    Next we aim to prove the LOO using Lemma~\ref{lem:bcgsiroa3s-kinnerloop}.  The assumptions on $\IOAnoarg$ directly give the base case for the LOO, i.e.,
    \[
        \omega_1
        \leq \bigO{\eps} \left(\kappa(\bXX) \right)^{\constQA}
        \leq \bigO{\eps} \left(\kappa(\bXX) \right)^{\max\{\constQ, 1\}}.
    \]
    Now we assume that $\epsQkp\leq \bigO{\eps}\left(\kappa(\bXX) \right)^{\max\{\constQ, 1\}}$.
    Using Lemma~\ref{lem:bcgsiroa3s-kinnerloop} and the assumption on $\IOnoarg$, we conclude that
    \begin{equation*}
        \begin{split}
            \norm{I - \bQQbar_k^T \bQQbar_k}
            &\leq \epsQkp \bigl(4(1 + \epsQkp) (1 + \bigO{\eps}) \epsQkp \kappa(\bXX_k) + 1 \bigr) + \bigO{\eps} \kappa(\bXX_k) \\
            &\leq \bigO{\eps} \left(\kappa(\bXX_k) \right)^{\max\{\constQ, 1\}}.
        \end{split}
    \end{equation*}
    Note that $4(1 + \epsQkp) (1 + \bigO{\eps}) \epsQkp \kappa(\bXX_k) \leq 1$ requires $\bigO{\eps} \kappa^{\alpha+1}(\bXX_k) \leq 1$, which is guaranteed by the assumption $\bigO{\eps} \kappa^{\theta}(\bXX) \leq \frac{1}{2}$.
\end{proof}

With Theorem~\ref{thm:bcgsiroa3s} we have proven our observations from Figure~\ref{fig:roadmap_3} in Section~\ref{sec:roadmap}.  By removing the first \IOnoarg in the inner loop, we implicitly impose a restriction on $\kappa(\bXX)$ dictated by the remaining \IOnoarg.  In particular, for $\BCGSIROAthree\circ\HouseQR$, $\theta = 2$, and for $\BCGSIROAthree\circ\CholQR$, $\theta = 3$.  Practically speaking, in double precision, the first translates to the requirement that $\kappa(\bXX) \leq 10^8$, and the latter to $\kappa(\bXX) \leq 10^{5.\overline{3}}$.

\subsection{\texttt{BCGSI+A-2S}} \label{sec:BCGSIROA2S}
For the $k$th inner loop of Algorithm~\ref{alg:BCGSIROA2S}, the projection stage also satisfies \eqref{eq:two-proj}.
Comparing $\BCGSIROAtwo$ and $\BCGSIROAthree\circ\CholQR$, the only difference is that for the intraortho stage, $\BCGSIROAtwo$ applies a Cholesky factorization (implicitly via line~\ref{line:bcgsiroa2s-chol}) to
\[
    \vV_k^T \vV_k - (\bQQbar_{k-1}^T \vV_k)^T (\bQQbar_{k-1}^T \vV_k),
\]
with $\vV_k = (I - \bQQ_{k-1} \bQQ_{k-1}^T) \vX_k$.
This means that we need to estimate the terms related to the intraortho stage, namely $\epsQ$, in Lemma~\ref{lem:bcgsiroa3s-kinnerloop} and then we derive the loss of orthogonality of $\BCGSIROAtwo$ directly by applying Lemma~\ref{lem:bcgsiroa3s-kinnerloop} in a manner similar to the proof of Theorem~\ref{thm:bcgsiroa3s}.
In the following lemma, we give a bound on $\epsQ$.

\begin{lemma} \label{lem:bcgspipiro}
    Let $\vVtil_k$, $\vWtil_k$, and $\bQQbar_{k-1}$ satisfy \eqref{eq:definition-Vtilk}, \eqref{eq:definition-Wtilk}, and~\eqref{eq:def-Vtilk-epsQkp}, and $\vVbar_k$ and $\vWbar_k$ be the computed results of $\vVtil_k$ and $\vWtil_k$.
    Assume that~\eqref{eq:lem-norm-Wk1:assump} is satisfied with
    \begin{equation} \label{eq:lem:bcgsprpiro-assump}
        2 \bigl((1 + \epsQkp)^2 \epsQkp^3 + \bigO{\eps}\bigr) \kappa^2(\bXX_k) + \bigl(\bigO{\eps} (1 + \epsQkp) + \epsXkp\bigr) \kappa(\bXX_k) \leq \frac{1}{2}.
    \end{equation}
    Then for $\vQbar_k$ and $\Ybar_{kk}$ with any $k \geq 2$ computed by lines~\ref{line:bcgsiroa2s-QR-begin}--\ref{line:bcgsiroa2s-QR-end} in Algorithm~\ref{alg:BCGSIROA2S},
    \begin{equation*}
        \begin{split}
            & \vWbar_k + \Delta \vW_k = \vQbar_k \Ybar_{kk} = \vQbar_k \Rbar_{kk},
            \quad \norm{\Delta \vW_k} \leq \bigO{\eps} \norm{\vWbar_k}, \\
            & \norm{\vQbar_k^T \vQbar_k - I}
            \leq \bigl(2 \cdot (1 + \epsQkp)^2 \epsQkp^3 + \bigO{\eps}\bigr)\kappa(\bXX_k).
        \end{split}
    \end{equation*}
\end{lemma}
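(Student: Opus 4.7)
The plan is to view the body of lines~\ref{line:bcgsiroa2s-QR-begin}--\ref{line:bcgsiroa2s-QR-end} of Algorithm~\ref{alg:BCGSIROA2S} as an implicit \CholQR applied to the ``Schur complement'' $\Omega_k - \YYbar_{1:k-1,k}^T\YYbar_{1:k-1,k}$, which up to rounding equals $\vWbar_k^T\vWbar_k$ for $\vWbar_k := \vVbar_k - \bQQbar_{k-1}\YYbar_{1:k-1,k}$, and then to exploit the near-orthogonality of $\bQQbar_{k-1}$ to sharpen the generic $\bigO{\eps}\kappa^2$ bound that \CholQR would normally incur. The residual identity is the easy half: a direct floating-point bookkeeping on the batched inner product of line~\ref{line:bcgsiroa2s-QR-begin}, on the Cholesky factorization of line~\ref{line:bcgsiroa2s-chol}, and on the triangular solve of line~\ref{line:bcgsiroa2s-QR-end} yields $\vQbar_k\Ybar_{kk} = \vWbar_k + \Delta \vW_k$ with $\norm{\Delta \vW_k} \leq \bigO{\eps}\norm{\vWbar_k}$, and the equality $\Ybar_{kk} = \Rbar_{kk}$ is baked into the algorithm.

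For the loss of orthogonality I would start from $\vQbar_k \approx \vWbar_k\Ybar_{kk}^\inv$, which gives
\begin{equation*}
\vQbar_k^T\vQbar_k - I = \Ybar_{kk}^\tinv\bigl(\vWbar_k^T\vWbar_k - \Ybar_{kk}^T\Ybar_{kk}\bigr)\Ybar_{kk}^\inv + \bigO{\eps}.
\end{equation*}
The pivotal step is then to prove the structural identity
\begin{equation*}
\vWbar_k^T\vWbar_k - \Ybar_{kk}^T\Ybar_{kk} = \YYbar_{1:k-1,k}^T\bigl(\bQQbar_{k-1}^T\bQQbar_{k-1} - I\bigr)\YYbar_{1:k-1,k} + \bigO{\eps}\norm{\vX_k}^2.
\end{equation*}
This is obtained by expanding $\vWbar_k^T\vWbar_k$ using $\vWbar_k = \vVbar_k - \bQQbar_{k-1}\YYbar_{1:k-1,k}$, substituting $\bQQbar_{k-1}^T\vVbar_k = \YYbar_{1:k-1,k} + \bigO{\eps}$ and $\Omega_k = \vVbar_k^T\vVbar_k + \bigO{\eps}$, and cancelling the common term $\vVbar_k^T\vVbar_k - \YYbar_{1:k-1,k}^T\YYbar_{1:k-1,k}$ against $\Ybar_{kk}^T\Ybar_{kk}$. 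The factor $\bQQbar_{k-1}^T\bQQbar_{k-1} - I$ is what supplies the extra power of $\epsQkp$ that is the whole point of the reorthogonalization.

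The remaining ingredients are size bounds on $\YYbar_{1:k-1,k}$ and $\Ybar_{kk}^\inv$. Using $\vVbar_k \approx \vVtil_k = (I - \bQQbar_{k-1}\bQQbar_{k-1}^T)\vX_k$ from Lemma~\ref{lem:quantities-one-projection}, I obtain $\YYbar_{1:k-1,k} \approx -(I - \bQQbar_{k-1}^T\bQQbar_{k-1})\bQQbar_{k-1}^T\vX_k$, hence $\norm{\YYbar_{1:k-1,k}} \leq (1+\epsQkp)\epsQkp\norm{\vX_k} + \bigO{\eps}\norm{\vX_k}$. For $\Ybar_{kk}^\inv$, the near-idempotency $(I - \bQQbar_{k-1}\bQQbar_{k-1}^T)^2 \approx I - \bQQbar_{k-1}\bQQbar_{k-1}^T$ implies $\Ybar_{kk}^T\Ybar_{kk} \approx \vWtil_k^T\vWtil_k$, so Lemma~\ref{lem:norm-Wk1} together with assumption~\eqref{eq:lem:bcgsprpiro-assump} yields $\sigmin(\Ybar_{kk}) \geq \tfrac{1}{2}\sigmin(\bXX_k)$ and therefore $\norm{\vX_k}\norm{\Ybar_{kk}^\inv} \leq 2\kappa(\bXX_k)$. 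Plugging these into the pivotal identity above gives the advertised bound.

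The main obstacle is two-fold. First, assumption~\eqref{eq:lem:bcgsprpiro-assump} must be shown to be strong enough to simultaneously force the Schur complement entering line~\ref{line:bcgsiroa2s-chol} to remain positive definite (so that the Cholesky call does not break down) and to bound $\sigmin(\Ybar_{kk})$ from below by a constant multiple of $\sigmin(\bXX_k)$. Second, one has to track the cancellation in the structural identity carefully enough to see that the factor $\bQQbar_{k-1}^T\bQQbar_{k-1} - I$ sandwiched between two copies of $\YYbar_{1:k-1,k}$ trades away powers of $\kappa(\bXX_k)$ for powers of $\epsQkp$; the naive bound $\norm{\Ybar_{kk}^\inv}^2\,\norm{\vWbar_k^T\vWbar_k - \Ybar_{kk}^T\Ybar_{kk}}$ loses an extra factor of $\kappa(\bXX_k)$, and it is the explicit form of this middle factor that recovers the claimed estimate.
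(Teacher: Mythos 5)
Your proposal follows essentially the same route as the paper's proof: the identity $\vQbar_k^T\vQbar_k - I = \Ybar_{kk}^\tinv(\vWbar_k^T\vWbar_k - \Ybar_{kk}^T\Ybar_{kk})\Ybar_{kk}^\inv + \bigO{\eps}$, the observation that the difference $\vWbar_k^T\vWbar_k - \Ybar_{kk}^T\Ybar_{kk}$ reduces (up to $\bigO{\eps}\norm{\vX_k}^2$) to a term sandwiched by factors of $I - \bQQbar_{k-1}^T\bQQbar_{k-1}$ — your $\YYbar_{1:k-1,k}^T(\bQQbar_{k-1}^T\bQQbar_{k-1}-I)\YYbar_{1:k-1,k}$ is exactly the paper's $\vX_k^T\bQQbar_{k-1}(I-\bQQbar_{k-1}^T\bQQbar_{k-1})^3\bQQbar_{k-1}^T\vX_k$ up to $\bigO{\eps}$ — supplying the $(1+\epsQkp)^2\epsQkp^3$ factor, and the lower bound on $\sigmin(\Ybar_{kk})$ via the assumption are all the paper's own steps. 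The only differences are bookkeeping details (the paper invokes Higham's row-wise backward error for the triangular solve and bounds $\norm{\vQbar_k}$ by a self-bounding argument rather than asserting $\vQbar_k\approx\vWbar_k\Ybar_{kk}^\inv$ outright), which do not change the argument.
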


\begin{proof}
    By~\cite[Theorem~8.5 and Lemma~6.6]{Hig02}, there exists $\Delta R_i$ such that
    \begin{equation} 
        (\Ybar_{kk}^T + \Delta R_i^T) \vQbar(i, :)^T= \vWbar_k(i, :)^T,
        \quad \norm{\Delta R_i} \leq \bigO{\eps} \norm{\Ybar_{kk}},
    \end{equation}
    for $i = 1, 2, \dotsc, m$, where have used MATLAB notation to access rows.
    Then we obtain
    \begin{equation} \label{eq:backward-error-triangular-sys}
        \vWbar_k + \Delta \vW_k = \vQbar_k \Ybar_{kk},
        \quad \Delta\vW_k(i, :)^T = \Delta R_i^T \vQbar_k(i, :)^T,
    \end{equation}
    where $\Delta\vW_k$ satisfies
    \begin{equation} \label{eq:backward-error-deltaW}
        \norm{\Delta\vW_k} \leq \bigO{\eps} \norm{\vQbar_k} \norm{\Ybar_{kk}}.
    \end{equation}
    We therefore need to estimate $\norm{\Ybar_{kk}}$ and $\norm{\vQbar_k}$.
    
    By standard rounding-error bounds and~\eqref{eq:Qkp-2norm},
    \begin{align}
        & \bar{\YY}_{1:k-1,k} = \bQQbar_{k-1}^T \vVbar_k + \Delta \YY_k,
        \quad \norm{\Delta \YY_k} \leq \bigO{\eps} \norm{\vVbar_k},
        \label{eq:bcgsiroa2s-YY}\\
        & \Ombar_k  = \vVbar_k^T \vVbar_k + \Delta\Omega_k,
        \quad \norm{\Delta\Omega_k} \leq \bigO{\eps} \norm{\vVbar_k}^2,
        \label{eq:bcgsiroa2s-omega}\\
        & \vWbar_k = (I - \bQQbar_{k-1} \bQQbar_{k-1}^T ) \vVbar_k + \Delta \vE_W,
        \quad \norm{\Delta \vE_W} \leq \bigO{\eps} \norm{\vVbar_k}.
        \label{eq:bcgsiroa2s-W}
    \end{align}
    Applying \cite[Theorem~10.3]{Hig02} to line~\ref{line:bcgsiroa2s-chol} of Algorithm~\ref{alg:BCGSIROA2S} leads to
    \begin{equation} \label{eq:thm:PIPI+:Skk_chol}
        \Ybar_{kk}^T\Ybar_{kk} = \Ombar_k - \YYbar_{1:k-1,k}^T\YYbar_{1:k-1,k} + \Delta F_k + \Delta C_k,
    \end{equation}
    where $\Delta F_k$ is the floating-point error from the computations $\Ombar_k - \YYbar_{1:k-1,k}^T \YYbar_{1:k-1,k}$, while $\Delta C_k$ denotes the Cholesky error.  Furthermore, the following bounds hold:
    \begin{equation} \label{eq:lem:PIPI+:DeltaFk_DeltaCk}
        \norm{\Delta F_k} \leq \bigO{\eps}\norm{\vVbar_k}^2 \mbox{ and }
        \norm{\Delta C_k} \leq \bigO{\eps}\norm{\vVbar_k}^2.
    \end{equation}
    Combining~\eqref{eq:lem:PIPI+:DeltaFk_DeltaCk} with~\eqref{eq:bcgsiroa2s-YY} and~\eqref{eq:bcgsiroa2s-omega}, we have
    \begin{equation}
        \Ybar_{kk}^T\Ybar_{kk} = \vVbar_k^T \vVbar_k - \vVbar_k^T \bQQbar_{k-1} \bQQbar_{k-1}^T \vVbar_k + \Delta\vY_k,
        \quad\norm{\Delta\vY_k} \leq \bigO{\eps} \norm{\vVbar_k}^2.
    \end{equation}
    Let
    \begin{align*}
        \vM_k^{(1)} & = \vVbar_k^T (I - \bQQbar_{k-1} \bQQbar_{k-1}^T) \vVbar_k, \mbox{ and} \\
        \vM_k^{(2)} & = \vVbar_k^T (I - \bQQbar_{k-1} \bQQbar_{k-1}^T)(I - \bQQbar_{k-1} \bQQbar_{k-1}^T) \vVbar_k.
    \end{align*}
    Notice that
    \begin{equation*}
        \begin{split}
            & \vM_k^{(1)}
            = \vM_k^{(2)} + \vVbar_k^T \bQQbar_{k-1} (I - \bQQbar_{k-1}^T \bQQbar_{k-1}) \bQQbar_{k-1}^T \vVbar_k, \mbox{ and}\\
            & \vM_k^{(2)} = \vWbar_k^T \vWbar_k - \Delta \vE_W^T \vWbar_k - \vWbar_k^T \Delta \vE_W
            + \Delta \vE_W^T \Delta \vE_W.
        \end{split}
    \end{equation*}
    Then
    \begin{equation} \label{eq:hatZ}
        \Ybar_{kk}^T\Ybar_{kk} = \vM_k^{(2)} + \vVbar_k^T \bQQbar_{k-1} (I - \bQQbar_{k-1}^T \bQQbar_{k-1}) \bQQbar_{k-1}^T \vVbar_k + \Delta\vY_k
        = \vWbar_k^T \vWbar_k + \Delta \vM,
    \end{equation}
    where
    \begin{align*}
        \Delta\vM
        & = - \Delta \vE_W^T \vWbar_k - \vWbar_k^T \Delta \vE_W + \Delta \vE_W^T \Delta \vE_W \\
        & \quad +\vVbar_k^T \bQQbar_{k-1} (I - \bQQbar_{k-1}^T \bQQbar_{k-1}) \bQQbar_{k-1}^T \vVbar_k + \Delta\vY_k.
    \end{align*}
    From Lemmas~\ref{lem:norm-Wk0}, \ref{lem:quantities-one-projection}, \ref{lem:norm-Wk1}, and~\ref{lem:quantities-two-projection},  $\norm{\vVbar_k^T \bQQbar_{k-1} (I - \bQQbar_{k-1}^T \bQQbar_{k-1}) \bQQbar_{k-1}^T \vVbar_k}$ can be bounded by
    \begin{equation}
        \begin{split}
            & \norm{\vVbar_k^T \bQQbar_{k-1} (I - \bQQbar_{k-1}^T \bQQbar_{k-1}) \bQQbar_{k-1}^T \vVbar_k} \\
            & \quad \leq \norm{\vVtil_k^T \bQQbar_{k-1} (I - \bQQbar_{k-1}^T \bQQbar_{k-1}) \bQQbar_{k-1}^T \vVtil_k} + \bigO{\eps}\norm{\vX_k}^2\\
            & \quad \leq \norm{\vX_k^T \bQQbar_{k-1} (I - \bQQbar_{k-1}^T \bQQbar_{k-1})^3 \bQQbar_{k-1}^T \vX_k} + \bigO{\eps}\norm{\vX_k}^2\\
            & \quad \leq \bigl((1 + \epsQkp)^2 \epsQkp^3 + \bigO{\eps}\bigr)\norm{\vX_k}^2,
        \end{split}
    \end{equation}
    and further together with~\eqref{eq:lem:norm-I-QQT} and~\eqref{eq:thm:PIPI+:Skk_chol},
    \begin{equation}
        \begin{split} \label{eq:deltaM}
            \norm{\Delta\vM}
            & \leq \bigl((1 + \epsQkp)^2 \epsQkp^3 + \bigO{\eps}\bigr) \bigr) \norm{\vX_k}^2
            + \bigO{\eps} \norm{\vVbar_k}^2 \\
            & \leq \bigl((1 + \epsQkp)^2 \epsQkp^3 + \bigO{\eps}\bigr) \bigr) \norm{\vX_k}^2.
        \end{split}
    \end{equation}
    Note that also from Lemmas~\ref{lem:norm-Wk1}, \ref{lem:quantities-two-projection}, and~\eqref{eq:lem:norm-I-QQT}, we have
    \begin{equation} \label{eq:forproof:Ykk-sigma-kappa}
        \begin{split}
            & \frac{\bigl((1 + \epsQkp)^2 \epsQkp^3 + \bigO{\eps}\bigr) \norm{\vX_k}^2}{\sigmin^2 (\vWbar_k)} \\
            & \qquad \leq \frac{\bigl((1 + \epsQkp)^2 \epsQkp^3 + \bigO{\eps}\bigr) \norm{\vX_k}^2}{\sigmin^2 (\vWtil_k) - \bigO{\eps} \norm{\vX_k} - \bigO{\eps} \epsQkp \norm{\vX_k}} \\
            & \qquad \leq \frac{\bigl((1 + \epsQkp)^2 \epsQkp^3 + \bigO{\eps}\bigr) \norm{\vX_k}^2}{\sigmin(\bXX_k) - \epsXkp \norm{\bXX_{k-1}} - \bigO{\eps} \norm{\vX_k} - \bigO{\eps} \epsQkp \norm{\vX_k}} \\
            & \qquad \leq \frac{\bigl((1 + \epsQkp)^2 \epsQkp^3 + \bigO{\eps}\bigr) \kappa^2(\bXX_k)}{1 - \epsXkp \kappa(\bXX_k) - \bigO{\eps} (1 + \epsQkp) \kappa(\bXX_k)} \\
            & \qquad \leq \frac{1}{2}
        \end{split}
    \end{equation}
    by the assumption~\eqref{eq:lem:bcgsprpiro-assump}. From~\eqref{eq:hatZ} and~\eqref{eq:deltaM}, it then follows that
    \begin{equation}
        \begin{split} \label{eq:Ykk-sigma-kappa:norm}
             \norm{\Ybar_{kk}}^2
             & \leq \norm{\vWbar_k}^2 + \bigl((1 + \epsQkp)^2 \epsQkp^3 + \bigO{\eps}\bigr) \norm{\vX_k}^2 \\
             & \leq \norm{\vWbar_k}^2\Biggl(1 + \frac{\bigl((1 + \epsQkp)^2 \epsQkp^3 + \bigO{\eps}\bigr)\norm{\vX_k}^2}{\sigmin^2(\vWbar_k)}\Biggr)\\
             & \leq 2\norm{\vWbar_k}^2
        \end{split}
    \end{equation}
    and
    \begin{equation} \label{eq:Ykk-sigma-kappa:sigmin}
        \begin{split}
             \sigmin^2 (\Ybar_{kk})
             & \geq \sigmin^2 (\vWbar_k) - \bigl((1 + \epsQkp)^2 \epsQkp^3 + \bigO{\eps}\bigr) \norm{\vX_k}^2.
        \end{split}
    \end{equation}
    Furthermore, we then have
    \begin{equation} \label{eq:Ykk-sigma-kappa:kappa}
        \begin{split}
             \kappa(\Ybar_{kk})
             & \leq \sqrt{\frac{2\norm{\vWbar_k}^2}{\sigmin^2(\vWbar_k) -  \bigl((1 + \epsQkp)^2 \epsQkp^3 + \bigO{\eps}\bigr) \norm{\vX_k}^2}} \\
             & = \sqrt{\frac{2\kappa(\vWbar_k)^2}{1 -  \bigl((1 + \epsQkp)^2 \epsQkp^3 + \bigO{\eps}\bigr) \norm{\vX_k}^2/\sigmin^2 (\vWbar_k)}} \\
             & \leq 2 \kappa(\vWbar_k).
        \end{split}
    \end{equation}
    From~\eqref{eq:backward-error-triangular-sys}, it follows that
    \begin{equation} \label{eq:bcgsiroa2s-QkTQk}
        \begin{split}
            \vQbar_k^T \vQbar_k
            & = \Ybar_{kk}^\tinv \vWbar_k^T \vWbar_k \Ybar_{kk}^\inv
            + \Ybar_{kk}^\tinv \Delta\vW_k^T \vWbar_k \Ybar_{kk}^\inv\\
            & \quad + \Ybar_{kk}^\tinv \vWbar_k^T \Delta\vW_k \Ybar_{kk}^\inv
            + \Ybar_{kk}^\tinv \Delta\vW_k^T \Delta\vW_k \Ybar_{kk}^\inv.
        \end{split}
    \end{equation}
    Combining~\eqref{eq:bcgsiroa2s-QkTQk} with~\eqref{eq:backward-error-deltaW}, \eqref{eq:hatZ}, \eqref{eq:forproof:Ykk-sigma-kappa}, \eqref{eq:Ykk-sigma-kappa:sigmin}, and \eqref{eq:Ykk-sigma-kappa:kappa}, it holds that
    \begin{equation*}
        \begin{split}
            & \norm{\Ybar_{kk}^\tinv \vWbar_k^T \vWbar_k \Ybar_{kk}^\inv}
            \leq  1 + \frac{\bigl((1 + \epsQkp)^2 \epsQkp^3 + \bigO{\eps}\bigr) \bigr) \norm{\vX_k}^2}{\sigmin (\vWbar_k)^2 - \bigl((1 + \epsQkp)^2 \epsQkp^3 + \bigO{\eps}\bigr) \bigr) \norm{\vX_k}^2}
            \leq 2, \\
            & \norm{\vW_k \Ybar_{kk}^\inv}
            \leq \norm{\vQbar_k} + \bigO{\eps} \norm{\vQbar_k} \kappa(\vWbar_k), \mbox{ and} \\
            & \norm{\Delta\vW_k \Ybar_{kk}^\inv}
            \leq \bigO{\eps} \norm{\vQbar_k} \kappa(\Ybar_{kk})
            \leq \bigO{\eps} \norm{\vQbar_k} \kappa(\vWbar_k),
        \end{split}
    \end{equation*}
    which implies that
    \begin{equation*}
        \begin{split}
            \norm{\vQbar_k}^2
            & \leq \norm{\Ybar_{kk}^\tinv \vWbar_k^T \vWbar_k \Ybar_{kk}^\inv} + \bigO{\eps} \norm{\vQbar_k}^2 \kappa(\vWbar_k)  + \bigl(\bigO{\eps}\kappa(\vWbar_k)\bigr)^2 \norm{\vQbar_k}^2  \\
            & \leq 2 + \bigO{\eps}\kappa(\vWbar_k)  \norm{\vQbar_k}^2.
        \end{split}
    \end{equation*}
    Thus we have
    \begin{equation*}
        \norm{\vQbar_k}
        \leq \sqrt{\frac{2}{1 - \bigO{\eps} \kappa(\vWbar_k)}}
        \leq 2.
    \end{equation*}
    Together with~\eqref{eq:backward-error-deltaW} and~\eqref{eq:Ykk-sigma-kappa:norm}, we bound $\Delta\vW_k$ by $\bigO{\eps} \norm{\vWbar_k}$.
    From~\eqref{eq:backward-error-deltaW} and \eqref{eq:bcgsiroa2s-QkTQk}, we conclude
    \begin{align}
        \norm{I - \vQbar_k^T \vQbar_k}
        & \leq \frac{\bigl((1 + \epsQkp)^2 \epsQkp^3 + \bigO{\eps}\bigr) \bigr) \norm{\vX_k}^2}{\sigmin (\vWbar_k)^2 - \bigl((1 + \epsQkp)^2 \epsQkp^3 + \bigO{\eps}\bigr) \norm{\vX_k}^2}
        + \bigO{\eps} \kappa(\vWbar_k). \label{eq:bcgsiroa2s-LOO}
    \end{align}
\end{proof}

Assuming that $\epsQkp\leq \bigO{\eps}\kappa^2(\bXX_{k-1})$, the assumption~\eqref{eq:lem:bcgsprpiro-assump} of Lemma~\ref{lem:bcgspipiro} can be guaranteed by $\bigO{\eps} \kappa^{8/3}(\bXX_k) \leq \frac{1}{2}$.  It further follows that
\[
    \norm{I - \vQbar_k^T \vQbar_k}
    \leq \bigO{\eps} \epsQkp^3 \kappa(\bXX_k) + \bigO{\eps} \kappa(\bXX_k)
    \leq \bigO{\eps}\kappa^2(\bXX_k),
\]
because the requirement $\bigO{\eps} \kappa^{8/3}(\bXX_k) \leq \frac{1}{2}$ can imply $\epsQkp^2\kappa(\bXX_k)\leq 1$.  Theorem \ref{thm:bcgsiroa2s} summarizes the results for \BCGSIROAtwo using Theorem \ref{thm:bcgsiroa3s} with Lemmas \ref{lem:bcgsiroa3s-kinnerloop} and \ref{lem:bcgspipiro}.
\begin{theorem} \label{thm:bcgsiroa2s}
    Let $\bQQbar$ and $\RRbar$ denote the computed result of Algorithm~\ref{alg:BCGSIROA2S}. Assume that for all $\vX \in \spR^{m \times s}$ with $\kappa(\vX) \leq \kappa(\bXX)$, the following hold for $[\vQbar, \Rbar] = \IOA{\vX}$:
    \begin{equation*}
        \begin{split}
            & \vX + \Delta \vX = \vQbar\Rbar,
            \quad \norm{\Delta \vX} \leq \bigO{\eps} \norm{\vX}, \\
            & \norm{I - \vQbar^T \vQbar} \leq \bigO{\eps} \kappa^{\alpha_A}(\vX).
        \end{split}
    \end{equation*}
    If $\alpha_A\leq 2$ and $\bigO{\eps} \kappa^{3}(\bXX) \leq \frac{1}{2}$ is satisfied, then
    \[
        \bXX + \Delta\bXX = \bQQbar\RRbar,
        \quad\norm{\Delta\bXX} \leq \bigO{\eps} \norm{\bXX}
    \]
    and
    \begin{equation}
        \norm{I - \bQQbar^T \bQQbar} \leq \bigO{\eps} \kappa^2(\bXX).
    \end{equation}
\end{theorem}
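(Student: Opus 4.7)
The plan is to recycle the induction structure from the proof of Theorem~\ref{thm:bcgsiroa3s} and simply replace the generic intraortho bound~\eqref{eq:epsqr} by the sharper, problem-specific bound supplied by Lemma~\ref{lem:bcgspipiro}. The key observation is that \BCGSIROAtwo differs from $\BCGSIROAthree\circ\CholQR$ only in how the second projection and the \CholQR factorization are fused into a single batched kernel; Lemma~\ref{lem:bcgspipiro} already quantifies precisely what this fusion does to the computed $\vQbar_k$ and $\Ybar_{kk}$.

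First I would establish the residual bound $\bXX + \Delta\bXX = \bQQbar\RRbar$ with $\norm{\Delta\bXX} \leq \bigO{\eps}\norm{\bXX}$ by induction on $k$, in exact analogy with the residual argument inside the proof of Theorem~\ref{thm:bcgsiroa3s}. The base case is handed to us by the assumption on $\IOAnoarg$. For the inductive step, Lemma~\ref{lem:bcgspipiro} provides $\vWbar_k + \Delta\vW_k = \vQbar_k \Ybar_{kk} = \vQbar_k \Rbar_{kk}$ with $\norm{\Delta\vW_k}\leq \bigO{\eps}\norm{\vWbar_k}$, which plays exactly the role that~\eqref{eq:epsqr} played for the generic \IOnoarg in the 3-sync proof. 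Combined with the standard floating-point bounds on the projection stage (which are unchanged in passing from 3-sync to 2-sync, since batching inner products only introduces $\bigO{\eps}$ errors in the matrix products) and the assembly lines $\RRbar_{1:k-1,k} = \SSbar_{1:k-1,k} + \YYbar_{1:k-1,k}$, $\Rbar_{kk} = \Ybar_{kk}$, the same telescoping per-column estimate as in Theorem~\ref{thm:bcgsiroa3s} yields $\norm{\Delta\bXX}\leq \bigO{\eps}\norm{\bXX}$.

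Second, for the LOO I would induct on $k$ with hypothesis $\epsQkp \leq \bigO{\eps}\kappa^2(\bXX_{k-1})$. The base case follows from $\omega_1 \leq \bigO{\eps}\kappa^{\alpha_A}(\bXX_1) \leq \bigO{\eps}\kappa^2(\bXX)$ using $\alpha_A \leq 2$. For the inductive step, I would first check that the hypothesis of Lemma~\ref{lem:bcgspipiro} (inequality~\eqref{eq:lem:bcgsprpiro-assump}) is satisfied: substituting $\epsQkp \leq \bigO{\eps}\kappa^2(\bXX_k)$ and the residual bound $\epsXkp \leq \bigO{\eps}$ reduces it to the sufficient condition $\bigO{\eps}\kappa^{8/3}(\bXX_k) \leq \tfrac{1}{2}$ flagged after Lemma~\ref{lem:bcgspipiro}, which is in turn implied by our stronger assumption $\bigO{\eps}\kappa^{3}(\bXX)\leq \tfrac{1}{2}$. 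Lemma~\ref{lem:bcgspipiro} then yields $\epsQ \leq \bigO{\eps}\kappa^2(\bXX_k)$ and $\norm{\Delta\vW_k}\leq \bigO{\eps}\norm{\vWbar_k}$, so Lemma~\ref{lem:bcgsiroa3s-kinnerloop} applies with this $\epsQ$ and gives
\begin{equation*}
    \norm{I - \bQQbar_k^T\bQQbar_k} \leq \epsQkp + \bigO{\eps}\kappa(\bXX_k) + 4(1+\epsQkp)(1+\epsQ)\,\epsQkp^2\kappa(\bXX_k) + \epsQ.
\end{equation*}
Under the inductive hypothesis the middle term becomes $\bigO{\eps^2}\kappa^5(\bXX_k) = \bigl(\bigO{\eps}\kappa^3(\bXX_k)\bigr)\cdot\bigO{\eps}\kappa^2(\bXX_k) \leq \bigO{\eps}\kappa^2(\bXX_k)$, so the whole right-hand side collapses to $\bigO{\eps}\kappa^2(\bXX_k)$, closing the induction.

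The main obstacle I expect is bookkeeping rather than analysis: one must carefully propagate the conditioning hypothesis so that \emph{both} Lemma~\ref{lem:bcgspipiro} (which needs~\eqref{eq:lem:bcgsprpiro-assump}, a mixed condition involving $\epsQkp^3\kappa^2$, $\epsXkp\kappa$, and $\bigO{\eps}\kappa$) \emph{and} Lemma~\ref{lem:bcgsiroa3s-kinnerloop} (which needs $\bigO{\eps}\kappa(\bXX_k)\leq \tfrac{1}{2}$) remain applicable at every step of the induction, while also ensuring that the $\epsQkp^2\kappa$ cross term in the LOO bound does not degrade past the $\bigO{\eps}\kappa^2$ budget. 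The condition $\bigO{\eps}\kappa^{3}(\bXX)\leq \tfrac{1}{2}$ is exactly the minimum strength needed to keep all of these uniform in $k$.
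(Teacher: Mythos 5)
Your proposal is correct and follows essentially the same route as the paper: the paper likewise obtains Theorem~\ref{thm:bcgsiroa2s} by substituting the $\epsQ$ and backward-error bounds of Lemma~\ref{lem:bcgspipiro} into Lemma~\ref{lem:bcgsiroa3s-kinnerloop} and repeating the induction of Theorem~\ref{thm:bcgsiroa3s}, with the same reduction of assumption~\eqref{eq:lem:bcgsprpiro-assump} to $\bigO{\eps}\kappa^{8/3}(\bXX_k)\leq\tfrac12$ under the hypothesis $\epsQkp\leq\bigO{\eps}\kappa^2(\bXX_{k-1})$. Your bookkeeping of the cross term $\epsQkp^2\kappa(\bXX_k)$ matches the paper's remark following Lemma~\ref{lem:bcgspipiro}.
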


Similarly to Theorem~\ref{thm:bcgsiroa3s}, Theorem~\ref{thm:bcgsiroa2s} reifies observations from Figure~\ref{fig:roadmap_4}, most notably the common behavior between $\BCGSIROAthree\circ\CholQR$ and $\BCGSIROAtwo\circ\CholQR$.  In particular, one cannot expect the two-sync variant to be better than $\BCGSIROAthree\circ\CholQR$, and indeed, the exponent on $\kappa(\bXX)$ is fixed to $3$ now, meaning that in double precision, we cannot prove stability when $\kappa(\bXX) > 10^{5.\overline{3}}$.

\subsection{\texttt{BCGSI+A-1S}} \label{sec:BCGSIROA1S}
The main difference between $\BCGSIROAone$ and $\BCGSIROAtwo$ is how $\vW_k$ defined in~\eqref{eq:two-proj} is computed, particularly $\vV_k$.  Thus, just as in the proof of $\BCGSIROAtwo$, we only need to estimate the specific $\epsproj$ for the $k$th inner loop of Algorithm~\ref{alg:BCGSIROA1S}, i.e., $\epsprojbcgsiroaone_k$, and then we can derive the LOO for $\BCGSIROAone$ using the same logic as in Theorem~\ref{thm:bcgsiroa2s}.

\begin{lemma} \label{lem:bcgsiroa1s}
    Let $\vVtil_k$, $\vWtil_k$, and $\bQQbar_{k-1}$ satisfy \eqref{eq:definition-Vtilk}, \eqref{eq:definition-Wtilk}, and~\eqref{eq:def-Vtilk-epsQkp}, and $\vVbar_k$ and $\vWbar_k$ be the computed results of $\vVtil_k$ and $\vWtil_k$ by Algorithm~\ref{alg:BCGSIROA1S}.
    Assume that~\eqref{eq:lem-norm-Wk1:assump} is satisfied with
    \begin{equation} \label{lem:bcgsiroa1s:assump}
        \begin{split}
            & 2 \bigl((1 + \epsQkp)^2 \epsQkp^3 \kappa^2(\bXX_k) + \bigO{\eps} \kappa^3(\bXX_k)\bigr) \\
            &+ \bigl(\bigO{\eps} (1 + \epsQkp) + \epsXkp\bigr) \kappa(\bXX_k) \leq \frac{1}{2}.
        \end{split}
    \end{equation}
    Then for the projection stage~\eqref{eq:two-proj} with any $k \geq 2$ computed by lines~\ref{line:bcgsiroa1s:proj-begin}--\ref{line:bcgsiroa1s:proj-end} and \ref{line:bcgsiroa1s:proj-addit} in Algorithm~\ref{alg:BCGSIROA1S},
    \begin{equation*}
        \norm{\vWbar_k - \vWtil_k}\leq \epsprojbcgsiroaone_k \leq \bigO{\eps} \kappa(\bXX_{k-1}) \norm{\vX_k}.
    \end{equation*}
\end{lemma}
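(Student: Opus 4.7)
The plan is to bound $\norm{\vWbar_k - \vWtil_k}$ by reducing it to a bound on the error in the ``reverse-engineered'' block column $\SSbar_{1:k-1,k}$, which is computed at iteration $k-1$ of Algorithm~\ref{alg:BCGSIROA1S}.  Standard floating-point analysis of the subtraction and inner product underlying line~\ref{line:Qk} gives $\vWbar_k = (I - \bQQbar_{k-1}\bQQbar_{k-1}^T)\vVbar_k + \Delta_1$ with $\norm{\Delta_1} \leq \bigO{\eps}\norm{\vVbar_k}$, and by~\eqref{eq:lem:norm-I-QQT} we have $\norm{I - \bQQbar_{k-1}\bQQbar_{k-1}^T} \leq \bigO{\sqrt{m}}$.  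Writing $\vVbar_k = \vX_k - \bQQbar_{k-1}\SSbar_{1:k-1,k} + \Delta_V$ with $\norm{\Delta_V} \leq \bigO{\eps}\norm{\vX_k}$ for the rounding in the subtraction, and subtracting $\vVtil_k = (I - \bQQbar_{k-1}\bQQbar_{k-1}^T)\vX_k$, the whole question reduces to bounding $\norm{\SSbar_{1:k-1,k} - \bQQbar_{k-1}^T\vX_k}$.

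The next step would decompose this block column into its first $k-2$ block rows and its last block row.  The first $k-2$ rows equal $\vZbar_{k-2} = \bQQbar_{k-2}^T\vX_k + \Delta_Z$ with $\norm{\Delta_Z} \leq \bigO{\eps}\norm{\vX_k}$ from a standard matrix-matrix product.  The last block row, $\Ybar_{k-1,k-1}^{-T}(\Pbar_{k-1} - \YYbar_{1:k-2,k-1}^T\vZbar_{k-2})$, is meant to approximate $\vQbar_{k-1}^T\vX_k$; to make this rigorous, I would invoke the version of line~\ref{line:Qk} from iteration $k-1$, namely $\vQbar_{k-1}\Ybar_{k-1,k-1} = \vVbar_{k-1} - \bQQbar_{k-2}\YYbar_{1:k-2,k-1} + \Delta_Q$ with $\norm{\Delta_Q} \leq \bigO{\eps}\norm{\vVbar_{k-1}}$ (arguing as in~\eqref{eq:backward-error-deltaW}).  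Transposing, right-multiplying by $\vX_k$, and substituting $\Pbar_{k-1} = \vVbar_{k-1}^T\vX_k + \bigO{\eps}\norm{\vVbar_{k-1}}\norm{\vX_k}$ and the computed $\vZbar_{k-2}$ produces
\[
\vQbar_{k-1}^T\vX_k = \Ybar_{k-1,k-1}^{-T}\bigl(\Pbar_{k-1} - \YYbar_{1:k-2,k-1}^T\vZbar_{k-2}\bigr) + \Ybar_{k-1,k-1}^{-T}\Delta_{\mathrm{last}},
\]
with $\norm{\Delta_{\mathrm{last}}} \leq \bigO{\eps}\norm{\vVbar_{k-1}}\norm{\vX_k}$.

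Finally, I would control $\norm{\Ybar_{k-1,k-1}^{-1}}$ via Lemma~\ref{lem:bcgspipiro} (specifically~\eqref{eq:Ykk-sigma-kappa:sigmin}) applied at iteration $k-1$, combined with the lower bound on $\sigmin(\vWtil_{k-1})$ from Lemma~\ref{lem:norm-Wk1}; assumption~\eqref{lem:bcgsiroa1s:assump} is precisely what ensures these estimates yield $\sigmin(\Ybar_{k-1,k-1}) \geq \bigO{\sigmin(\bXX_{k-1})}$.  Together with $\norm{\vVbar_{k-1}} \leq \bigO{\norm{\bXX_{k-1}}}$ (which follows because $\vVbar_{k-1}$ is the output of a single projection applied to $\vX_{k-1}$), the last-row contribution is then $\bigO{\eps}\kappa(\bXX_{k-1})\norm{\vX_k}$, which dominates the trivial first $k-2$ rows and closes the argument via the reduction above.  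The main obstacle is precisely this last step: confirming that the amplification factor $\norm{\Ybar_{k-1,k-1}^{-1}}$ contributes only $\kappa(\bXX_{k-1})$ and not the larger $\kappa(\bXX_k)$, which requires both the strength of assumption~\eqref{lem:bcgsiroa1s:assump} and the careful observation that the first projection at step $k-1$ has not amplified $\vVbar_{k-1}$ beyond $\norm{\bXX_{k-1}}$.
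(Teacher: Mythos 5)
Your reduction of $\norm{\vWbar_k - \vWtil_k}$ to the error in the reverse-engineered column $\SSbar_{1:k-1,k}$, the split into the $\vZbar_{k-2}$ rows and the last block row, and the backward-error treatment of the triangular solve with $\Ybar_{k-1,k-1}$ all mirror the paper's argument (cf.~\eqref{eq:bcgsiroa1s-YkkSkk+1}--\eqref{eq:Skk+1} and \eqref{eq:bcgsiroa1s-normYkk}). However, there is a genuine gap: your argument is a single-step analysis that tacitly assumes the iteration-$(k-1)$ quantities $\vVbar_{k-1}$, $\vWbar_{k-1}$, and $\Ybar_{k-1,k-1}$ are already accurate, and that is precisely what needs to be proven. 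In Algorithm~\ref{alg:BCGSIROA1S}, $\vVbar_{k-1}$ is \emph{not} ``the output of a single projection applied to $\vX_{k-1}$'': it is computed as $\vX_{k-1} - \bQQbar_{k-2}\SSbar_{1:k-2,k-1}$, where $\SSbar_{1:k-2,k-1}$ is itself the reverse-engineered quantity from iteration $k-2$ and carries an error of size $\bigO{\eps}\kappa(\bXX_{k-2})\norm{\vX_{k-1}}$, not $\bigO{\eps}\norm{\vX_{k-1}}$. Consequently, invoking \eqref{eq:Ykk-sigma-kappa:sigmin} and Lemma~\ref{lem:norm-Wk1} to get $\sigmin(\Ybar_{k-1,k-1}) \gtrsim \sigmin(\bXX_{k-1})$ requires knowing $\norm{\vWbar_{k-1} - \vWtil_{k-1}}$ is small --- i.e., the statement of this lemma at index $k-1$ --- so your argument as written is circular.

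The fix is the induction the paper carries out: prove simultaneously, for $i = 2, \dotsc, k$, that $\norm{\SSbar_{i-1,i} - \vQbar_{i-1}^T\vX_i}$, $\norm{\vVbar_i - \vVtil_i}$, and $\norm{\vWbar_i - \vWtil_i}$ are all bounded by $\bigO{\eps}\kappa(\bXX_{i-1})\norm{\vX_i}$, using the hypothesis at $i-1$ to control $\norm{\vVbar_{i-1}}$ and $\sigmin(\Ybar_{i-1,i-1})$ in the step to $i$. The reason this induction closes --- and the point your proposal needs but does not supply --- is that the accumulated factor $\kappa(\bXX_{i-1})$ from the hypothesis only ever re-enters the next step multiplied by an additional $\bigO{\eps}$, so under assumption~\eqref{lem:bcgsiroa1s:assump} the bound advances from $\kappa(\bXX_{i-1})$ to $\kappa(\bXX_i)$ rather than compounding multiplicatively. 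Your one-step mechanics are essentially the inductive step; they just need to be anchored by the base case (Lemma~\ref{lem:quantities-two-projection} at $i=2$) and the explicit induction hypothesis.
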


\begin{proof}
    We estimate $\epsprojbcgsiroaone_k$ satisfying $\norm{\vWbar_k - \vWtil_k}\leq \epsprojbcgsiroaone_k$ by analyzing the rounding error of computing $\SSbar_{k,k+1}$, i.e., $\norm{\SSbar_{k,k+1} - \vQbar_k^T\vX_{k+1}}$. Then our aim is to prove
    \begin{equation*}
        \begin{split}
            & \SSbar_{k,k+1} = \vQbar_k \vX_{k+1} + \Delta\SS_{k,k+1} \quad
            \norm{\Delta\SS_{k,k+1}} \leq \bigO{\eps} \kappa(\bXX_k) \norm{\vX_{k+1}}; \\
            & \vVbar_{k+1} = \vVtil_{k+1} + \Delta \vVtil_{k+1},
            \quad \norm{\Delta \vVtil_{k+1}} \leq \bigO{\eps} \kappa(\bXX_k) \norm{\vX_{k+1}}; \mbox{ and} \\
            & \vWbar_{k+1} = \vWtil_{k+1} + \Delta\vWtil_{k+1}, \quad
        \norm{\Delta\vWtil_{k+1}} \leq \bigO{\eps} \kappa(\bXX_k) \norm{\vX_{k+1}}
        \end{split}
    \end{equation*}
    by induction.
    Standard rounding-error analysis and Lemma~\ref{lem:quantities-two-projection} give the base case.
    Now we assume that the following $i-1$ case with $3 \leq i \leq k$ hold:
    \begin{equation} \label{eq:lem:bcgsiroa1s:i-1case}
        \begin{split}
            & \SSbar_{i-1,i} = \vQbar_{i-1} \vX_i + \Delta\SS_{i-1,i}, \quad
            \norm{\Delta\SS_{i-1,i}} \leq \bigO{\eps} \kappa(\bXX_{i-1}) \norm{\vX_i}; \\
            & \vVbar_i = \vVtil_i + \Delta \vVtil_i,
            \quad \norm{\Delta \vVtil_i} \leq \bigO{\eps} \kappa(\bXX_{i-1}) \norm{\vX_i}; \mbox{ and} \\
            & \vWbar_i = \vWtil_i + \Delta\vWtil_i, \quad
        \norm{\Delta\vWtil_i} \leq \bigO{\eps} \kappa(\bXX_{i-1}) \norm{\vX_i},
        \end{split}
    \end{equation}
    and then aim to prove the above also hold for \(i\).
    Note that the base case $i=2$ has already been proved.
    A straightforward rounding-error analysis gives
    \begin{align}
        & \vZbar_{i-1}= \bQQbar_{i-1}^T \vX_{i+1} + \Delta \vZ_{i-1},
        \quad \norm{\Delta \vZ_{i-1}} \leq \bigO{\eps} \norm{\vX_{i+1}}, \\
        & \Pbar_i = \vVbar_i^T \vX_{i+1} + \Delta P_i,
        \quad \norm{\Delta P_i} \leq \bigO{\eps} \norm{\vVbar_i} \norm{\vX_{i+1}}, \mbox{ and} \\
        & \YYbar_{1:i-1,i}= \bQQbar_{i-1}^T \vVbar_i+ \Delta \YY_{1:i-1,i}
        \quad \norm{\Delta \YY_{1:i-1,i}} \leq \bigO{\eps} \norm{\vVbar_i}.
    \end{align}
    Furthermore, similarly to~\eqref{eq:backward-error-triangular-sys}, we obtain
    \begin{equation} \label{eq:bcgsiroa1s-YkkSkk+1}
        \begin{split}
            \Ybar_{ii}^T \SSbar_{i,i+1} & = \Pbar_i - \YYbar_{1:i-1,i}^T \vZbar_{i-1} + \Delta E_i + \Delta\SS_{i,i+1}, \\
            & = \vVbar_i^T \vX_{i+1} + \Delta P_i- \YYbar_{1:i-1,i}^T (\bQQbar_{i-1}^T \vX_{i+1} + \Delta\vZ_{i-1}) \\
            & \quad + \Delta E_i + \Delta\SS_{i,i+1} \\
            & = \vVbar_i^T \vX_{i+1} - \YYbar_{1:i-1,i}^T \bQQbar_{i-1}^T \vX_{i+1}
            + \Delta P_i- \vVbar_i^T \bQQbar_{i-1} \Delta\vZ_{i-1} \\
            & \quad + \Delta E_i + \Delta\SS_{i,i+1},
        \end{split}
    \end{equation}
    where $\Delta E_i$ is the floating-point error from the sum and product of $\Pbar_i- \YYbar_{1:i-1,i}^T \vZbar_{i-1}$ and $\Delta\SS_{i,i+1}$ is from solving the triangular system $\Ybar_{ii}^{-T}\bigl(\Pbar_i-\YYbar_{1:i-1,i}^T \vZbar_{i-1} + \Delta E_i\bigr)$.
    Thus, the following bounds hold:
    \begin{equation}
        \begin{split}
            \norm{\Delta F_i} \leq \bigO{\eps} \norm{\vVbar_i} \norm{\vX_{i+1}}
            \quad\text{and}\quad
            \norm{\Delta\SS_{i,i+1}} &\leq \bigO{\eps} \norm{\SSbar_{i,i+1}}\norm{\Ybar_{ii}}
        \end{split}
    \end{equation}
    with $\Delta F_i:= \Delta P_i - \vVbar_i^T \bQQbar_{i-1} \Delta\vZ_{i-1} + \Delta E_i$.
    Similarly to \eqref{eq:hatZ} and \eqref{eq:deltaM}, we have
    \begin{equation}
        \begin{split}\label{eq:bcgsiroa1s-YkkTYkk}
            \Ybar_{ii}^T \Ybar_{ii} & = \vWbar_i^T \vWbar_i + \Delta \vM, \mbox{ with}\\
            \norm{\Delta\vM} & \leq \bigl((1 + \epsQip)^2 \epsQip^3 + \bigO{\eps}\kappa(\bXX_{i-1})\bigr) \norm{\vX_i}^2.
        \end{split}
    \end{equation}
    Furthermore, in analogy to \eqref{eq:Ykk-sigma-kappa:sigmin} and \eqref{eq:Ykk-sigma-kappa:kappa}, it holds that
    \begin{equation} \label{eq:bcgsiroa1s-normYkk}
        \begin{split}
            \norm{\Ybar_{ii}}^2 & \leq 2 \norm{\vWbar_i}, \\
            \sigmin^2 (\Ybar_{ii}) & \geq \sigmin^2(\vWtil_i) - \bigl((1 + \epsQip)^2 \epsQip^3 + \bigO{\eps}\kappa(\bXX_{i-1})\bigr) \norm{\vX_i}^2, \mbox{ and}\\
            \kappa(\Ybar_{ii})
            & \leq 2 \kappa(\vWbar_i)
            \leq 2 \kappa(\vWtil_i) + \bigO{\eps} \kappa(\bXX_{i-1})\norm{\vX_i},
        \end{split}
    \end{equation}
    which relies on the assumption~\eqref{lem:bcgsiroa1s:assump}. Combining~\eqref{eq:bcgsiroa1s-YkkSkk+1} with~\eqref{eq:bcgsiroa1s-normYkk},
    we obtain
    \begin{equation} \label{eq:Skk+1}
        \SSbar_{i,i+1} = \Ybar_{ii}^\tinv \left(\vVbar_i^T \vX_{i+1} - \YYbar_{1:i-1,i}^T \bQQbar_{i-1}^T \vX_{i+1} \right)
        + \Ybar_{ii}^\tinv \Delta F_i + \Ybar_{ii}^\tinv \Delta\SS_{i,i+1},
    \end{equation}
    with
    \begin{equation} \label{eq:bcgsiroa1s-normYkkinvFk}
        \begin{split}
            & \norm{\Ybar_{ii}^\tinv \Delta F_i} \leq \bigO{\eps} \norm{\vVbar_i} \norm{\vX_{i+1}} \norm{\Ybar_{ii}^\inv}, \mbox{ and}\\
            & \norm{\Ybar_{ii}^\tinv \Delta\SS_{i,i+1}} \leq \bigO{\eps} \norm{\SSbar_{i,i+1}}(2 \kappa(\vWtil_i) + \bigO{\eps} \kappa(\bXX_{i-1})\norm{\vX_i}).
        \end{split}
    \end{equation}

    Now we bound the distance between $\SSbar_{i,i+1}$ and $\vQbar_i^T \vX_{i+1}$. Following the same logic as with~\eqref{eq:bcgsiroa1s-YkkSkk+1}, we have
    \begin{align*}
        \vVbar_i & - \bQQbar_{i-1} \YYbar_{1:i-1,i} + \Delta \vC_i = \vQbar_i\Ybar_{ii}, \quad
         \norm{\Delta\vC_i} \leq \bigO{\eps} \norm{\vVbar_i} + \bigO{\eps} \norm{\Ybar_{ii}}.
    \end{align*}
    Furthermore, by~\eqref{eq:bcgsiroa1s-normYkk} and the assumption~\eqref{lem:bcgsiroa1s:assump}, it follows that
    \begin{equation}
        (\vVbar_i - \bQQbar_{i-1} \YYbar_{1:i-1,i}) \Ybar_{ii}^\inv + \Delta\vD_i = \vQbar_i
    \end{equation}
    with $\Delta\vD_i  = \Delta\vC_i  \Ybar_{ii}^\inv$ satisfying
    \begin{equation}
        \begin{split}
            \norm{\Delta\vD_i}
            & \leq \frac{\bigO{\eps} \norm{\vVbar_i}}{\sigmin(\vWtil_i)}
            + \bigO{\eps} \kappa(\Ybar_{ii})
            \leq \bigO{\eps} \kappa(\bXX_i).
        \end{split}
    \end{equation}
    Together with~\eqref{eq:Skk+1} and~\eqref{eq:bcgsiroa1s-normYkkinvFk}, we arrive at
    \begin{equation*}
        \begin{split}
            \SSbar_{i,i+1}
            & = \vQbar_i^T \vX_{i+1} - \Delta\vD_i^T \vX_{i+1}
            + \Ybar_{ii}^\tinv \Delta F_i + \Ybar_{ii}^\tinv \Delta\SS_{i,i+1} \\
            & = \vQbar_i^T \vX_{i+1} + \Delta \SSbar_{i,i+1},
        \end{split}
    \end{equation*}
    where $\Delta \SSbar_{i,i+1} = - \Delta\vD_i^T \vX_{i+1} + \Ybar_{ii}^\tinv \Delta F_i + \Ybar_{ii}^\tinv \Delta\SS_{i,i+1}$ satisfies
    \begin{equation*}
        \norm{\Delta \SSbar_{i,i+1}} \leq \bigO{\eps} \kappa(\bXX_i) \norm{\vX_{i+1}}.
    \end{equation*}
    Then standard floating-point analysis yields
    \begin{equation}
        \vVbar_{i+1} = \vVtil_{i+1} + \Delta \vVtil_{i+1},
        \quad\norm{\Delta \vVtil_{i+1}} \leq \bigO{\eps} \kappa(\bXX_i) \norm{\vX_{i+1}},
    \end{equation}
    and further,
    \begin{equation}
        \vWbar_{i+1} = \vWtil_{i+1} + \Delta\vWtil_{i+1}, \quad
        \norm{\Delta\vWtil_{i+1}} \leq \bigO{\eps} \kappa(\bXX_i) \norm{\vX_{i+1}},
    \end{equation}
    which gives the bound on $\epsprojbcgsiroaone_k$ by induction on $i$ and noticing $\norm{\vWbar_k - \vWtil_k}\leq \epsprojbcgsiroaone_k$.
\end{proof}

By imitating the proof of $\BCGSIROAthree$, Lemma~\ref{lem:epsQk-relation} leads to the following result on the LOO of $\BCGSIROAone$.

\begin{theorem} \label{thm:bcgsiroa1s}
    Let $\bQQbar$ and $\RRbar$ denote the computed results of Algorithm~\ref{alg:BCGSIROA1S}.  Assume that for all $\vX \in \spR^{m \times s}$ with $\kappa(\vX) \leq \kappa(\bXX)$, the following hold for $[\vQbar, \Rbar] = \IOA{\vX}$:
    \begin{equation*}
        \begin{split}
            & \vX + \Delta \vX = \vQbar\Rbar,
            \quad \norm{\Delta \vX} \leq \bigO{\eps} \norm{\vX}, \\
            & \norm{I - \vQbar^T \vQbar} \leq \bigO{\eps} \kappa^{\alpha_A}(\vX).
        \end{split}
    \end{equation*}
    If $\alpha_A \leq 2$ and $\bigO{\eps} \kappa^{3}(\bXX) \leq \frac{1}{2}$
    are satisfied, then
    \[
        \bXX + \Delta\bXX = \bQQbar \RRbar,
        \quad \norm{\Delta\bXX} \leq \bigO{\eps} \norm{\bXX}
    \]
    and
    \begin{equation}
        \norm{I - \bQQbar^T \bQQbar} \leq \bigO{\eps} \kappa^2(\bXX).
    \end{equation}
\end{theorem}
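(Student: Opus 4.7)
The plan is to imitate the structure of Theorems~\ref{thm:bcgsiroa3s} and~\ref{thm:bcgsiroa2s} and induct on $k$ to prove both the residual bound and the LOO bound simultaneously. Since Lemma~\ref{lem:bcgsiroa1s} already isolates the single new ingredient for \BCGSIROAone---namely, the enlarged projection error $\epsprojbcgsiroaone_k = \bigO{\eps}\kappa(\bXX_{k-1})\norm{\vX_k}$ arising from reverse-engineering $\SS_{k,k+1}$ via a triangular solve with $\Ybar_{kk}$---the rest of the argument should essentially reuse Lemma~\ref{lem:bcgspipiro} (which bounds the Cholesky-based intraortho error $\epsQ$) and then feed both quantities into the abstract bound of Lemma~\ref{lem:epsQk-relation}.

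For the base case, the hypotheses $\alpha_A \leq 2$ and $\bigO{\eps}\kappa^3(\bXX)\leq \tfrac{1}{2}$ directly yield $\omega_1 \leq \bigO{\eps}\kappa^2(\bXX_1)$ and $\bXX_1 + \Delta\bXX_1 = \bQQbar_1\RRbar_1$ with $\norm{\Delta\bXX_1}\leq \bigO{\eps}\norm{\bXX_1}$. For the inductive step on the residual, I would expand the quantities $\SSbar_{1:k-1,k}$, $\YYbar_{1:k-1,k}$, $\vVbar_k$, $\vWbar_k$, $\vQbar_k$, $\Ybar_{kk}$ produced by Algorithm~\ref{alg:BCGSIROA1S}, recombine them into $\bQQbar_{k-1}\RRbar_{1:k-1,k} + \vQbar_k\Rbar_{kk}$ in the same spirit as in the proof of Theorem~\ref{thm:bcgsiroa3s}, and verify that the various first-order perturbations assemble into a $\Delta\vX_k$ with $\norm{\Delta\vX_k}\leq \bigO{\eps}\norm{\vX_k}$. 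For the LOO step, assuming inductively that $\epsQkp \leq \bigO{\eps}\kappa^2(\bXX_{k-1})$, the constraint $\bigO{\eps}\kappa^3(\bXX)\leq \tfrac12$ is exactly what is needed to verify hypothesis~\eqref{lem:bcgsiroa1s:assump} of Lemma~\ref{lem:bcgsiroa1s} and hypothesis~\eqref{eq:lem:bcgsprpiro-assump} of Lemma~\ref{lem:bcgspipiro}. Substituting $\epsqr = \bigO{\eps}$ and $\epsQ \leq \bigO{\eps}\kappa(\bXX_k)$ from Lemma~\ref{lem:bcgspipiro}, $\epsprojbcgsiroaone_k$ from Lemma~\ref{lem:bcgsiroa1s}, and $\norm{\bQQbar_{k-1}^T\vWtil_k\Rbar_{kk}^\inv}\leq (1+\epsQkp)\epsQkp^2\norm{\vX_k}\norm{\Rbar_{kk}^\inv}$ from Lemma~\ref{lem:quantities-two-projection} into~\eqref{eq:lem:epsQk-relation:epsQk-relation}, the dominant contribution is $\epsprojbcgsiroaone_k / \sigmin(\vWtil_k) = \bigO{\eps}\kappa^2(\bXX_k)$, which closes the induction.

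The main obstacle will be the residual book-keeping, not the LOO: because $\SSbar_{k,k+1}$ is computed by a triangular solve with $\Ybar_{kk}$ from quantities that were batched one iteration earlier, the perturbations inherited by $\vVbar_{k+1}$ and $\vWbar_{k+1}$ carry an amplification of order $\kappa(\bXX_k)$. One must verify that whenever such a $\kappa$-inflated term enters the telescoping expression for $\bXX_k + \Delta\bXX_k$, it is always paired with a factor that either contains $\Ybar_{kk}$ (reabsorbing the $\kappa$) or is applied only through $\bQQbar_{k-1}$, so that these factors do not propagate into $\Delta\vX_k$ and spoil the target $\bigO{\eps}\norm{\bXX}$; this separation is precisely the reason Lemma~\ref{lem:bcgsiroa1s} bounds $\norm{\vWbar_k - \vWtil_k}$ rather than the computed $\vVbar_k$ and $\vWbar_k$ individually, and it is the place where a clean execution of the induction is most delicate.
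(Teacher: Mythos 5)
Your proposal matches the paper's (largely implicit) argument: the paper likewise proves this theorem by combining Lemma~\ref{lem:bcgsiroa1s} for the enlarged projection error $\epsprojbcgsiroaone_k \leq \bigO{\eps}\kappa(\bXX_{k-1})\norm{\vX_k}$ with the \CholQR-based intraortho bound of Lemma~\ref{lem:bcgspipiro}, feeding both into Lemma~\ref{lem:epsQk-relation} and inducting exactly as in Theorems~\ref{thm:bcgsiroa3s} and~\ref{thm:bcgsiroa2s}, with the dominant term being $\epsprojbcgsiroaone_k/\sigmin(\vWtil_k) = \bigO{\eps}\kappa^2(\bXX_k)$ as you identify. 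Your only imprecision is in the residual bookkeeping: the reason the $\kappa$-inflated perturbation $\Delta\SS_{1:k-1,k}$ does not pollute $\Delta\vX_k$ is not that it is absorbed by $\Ybar_{kk}$ or by $\bQQbar_{k-1}$, but that $\bQQbar_{k-1}\SSbar_{1:k-1,k}$ appears with opposite signs in $\bQQbar_{k-1}\RRbar_{1:k-1,k}$ and in $\vVbar_k$ and cancels exactly in the telescoping sum, leaving only local $\bigO{\eps}\norm{\vX_k}$ rounding terms.
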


Theorem~\ref{thm:bcgsiroa1s} concludes the journey through sync-point reductions and, much like Theorems~\ref{thm:bcgsiroa3s} and \ref{thm:bcgsiroa2s}, confirms the observations from Figure~\ref{fig:roadmap_4} in Section~\ref{sec:roadmap}.  It is clear that shifting the window of the for-loop is not to blame for any LOO; the problem stems already from the eliminated \IOnoarg in \BCGSIROAthree and fixing \CholQR as the remaining \IOnoarg in \BCGSIROAtwo.

\section{Summary and consequences of bounds} \label{sec:recap}
Table~\ref{tab:recap} summarizes the key assumptions and conclusions of the main theorems from Sections~\ref{sec:bcgs_iro_a} and \ref{sec:ls_proofs}.  A key feature of our results is the range of $\kappa(\bXX)$ for which LOO bounds are provable (and attainable).  All bounds require at least that $\bXX$ is numerically full rank; as we reduce the number of sync points, that restriction becomes tighter.  \BCGSIROAthree requires $\kappa(\bXX) \lesssim 10^8$ in double precision (or worse if $\alpha > 1$), while \BCGSIROAtwo and \BCGSIROAone need $\kappa(\bXX) \lesssim 10^{5.3}$.  Figure~\ref{fig:roadmap_3_piled} demonstrates $\BCGSIROAthree\circ\HouseQR$ ($\alpha = 0$, $\theta = 2$) deviating from $\bigO{\eps} \kappa^2(\bXX)$ after $\kappa(\bXX)$ exceeds $10^9$ and $\BCGSIROAthree\circ\CholQR$ ($\alpha =2$, $\theta = 3$) deviating much earlier, once $\kappa(\bXX)$ exceeds $10^6$.  Figure~\ref{fig:roadmap_4_piled} tells a similar story for \BCGSIROAtwo and \BCGSIROAone: both begin to deviate after $\kappa(\bXX) = 10^6$.

\begin{table}[htbp!]
    \caption{Summary of all major theorems, their assumptions, and their LOO bounds for \BCGSIROA and lower sync variants thereof.  For the column $\IOAnoarg$, we state only the exponent such that $\IOAnoarg$ has LOO bounded by $\bigO{\eps} \kappa^{\alpha_A}(\vX)$ for block vectors $\vX$.  See Table~\ref{tab:muscles} for examples of $\alpha_*$.} \label{tab:recap}
    \centering
    \begin{tabular}{c|c|c|c|c} \hline
         Variant & Theorem & $\IOAnoarg$ & $\bigO{\eps} \kappa^\theta(\bXX) \leq \frac{1}{2}$ & $\norm{I - \bQQbar^T \bQQbar}$ \\ \hline
         \BCGSIROA  & \ref{thm:bcgsiroa}    & $\alpha_A = 0$    & $\theta = \max(\alpha_1, 1)$  & $\bigO{\eps}$ \\
         \BCGSIROAthree  & \ref{thm:bcgsiroa3s}  & $\alpha_A \leq \alpha$   & $\theta = \max(\alpha + 1, 2)$    & $\bigO{\eps} \kappa^{\max(\alpha,1)}(\bXX)$ \\
         \BCGSIROAtwo   & \ref{thm:bcgsiroa2s}  & $\alpha_A \leq 2$ & $\theta = 3$  & $\bigO{\eps} \kappa^2(\bXX)$ \\
         \BCGSIROAone & \ref{thm:bcgsiroa1s}  & $\alpha_A \leq 2$ & $\theta = 3$    & $\bigO{\eps} \kappa^2(\bXX)$\\
    \end{tabular}
\end{table}

\begin{figure}[htbp!]
	\begin{center}
	    \begin{tabular}{cc}
	         \resizebox{.325\textwidth}{!}{\includegraphics[trim={0 0 210pt 0},clip]{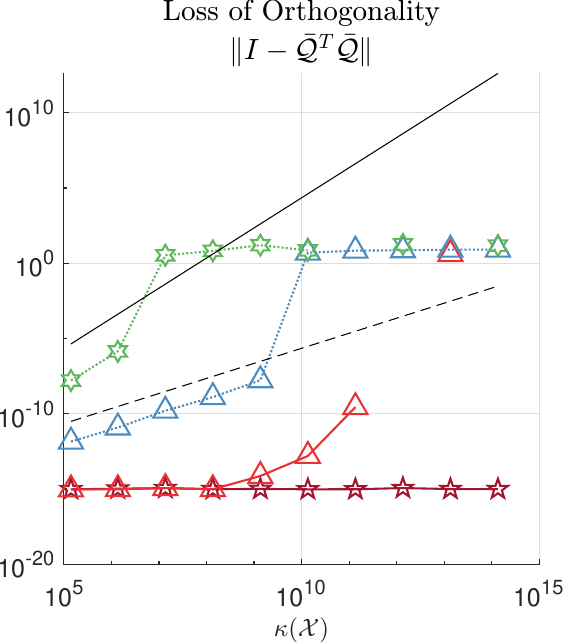}} &
	         \resizebox{.58\textwidth}{!}{\includegraphics{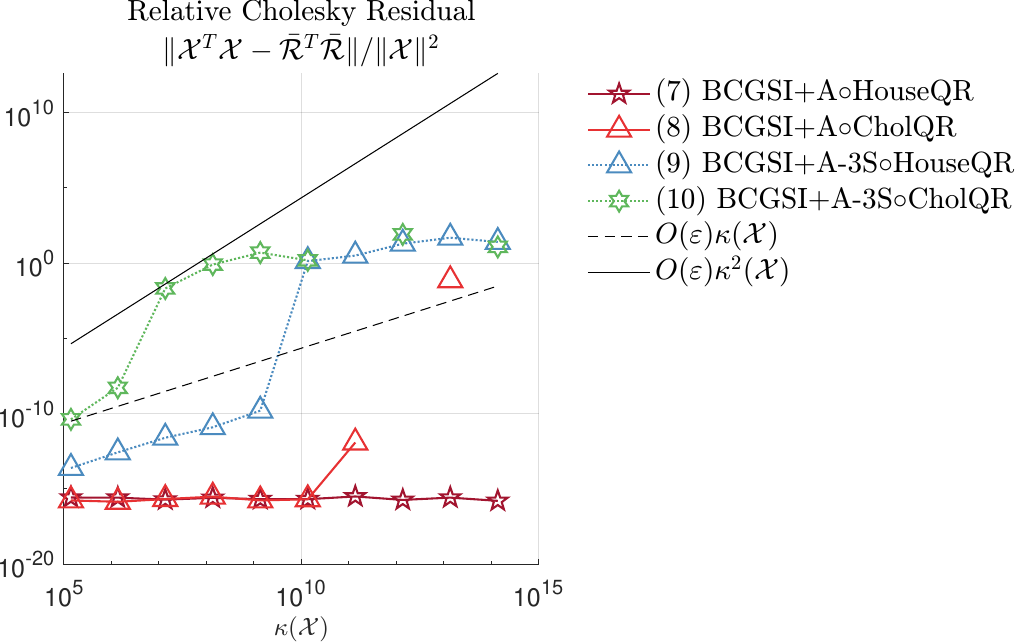}}
	    \end{tabular}
	\end{center}
    \caption{Comparison between \BCGSIROA and \BCGSIROAthree on a class of \piled matrices. Note that $\IOAnoarg$ is fixed as \HouseQR, and $\IOnoarg = \IOonenoarg = \IOtwonoarg$. \label{fig:roadmap_3_piled}}
\end{figure}

\begin{figure}[htbp!]
	\begin{center}
	    \begin{tabular}{cc}
	         \resizebox{.325\textwidth}{!}{\includegraphics[trim={0 0 205pt 0},clip]{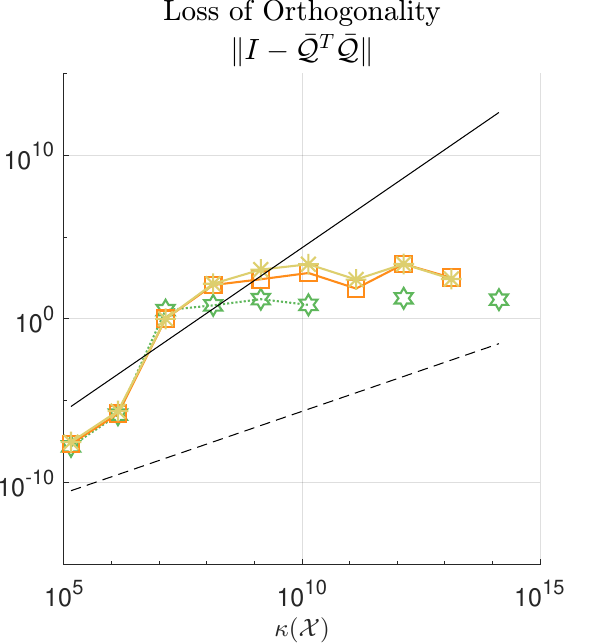}} &
	         \resizebox{.58\textwidth}{!}{\includegraphics{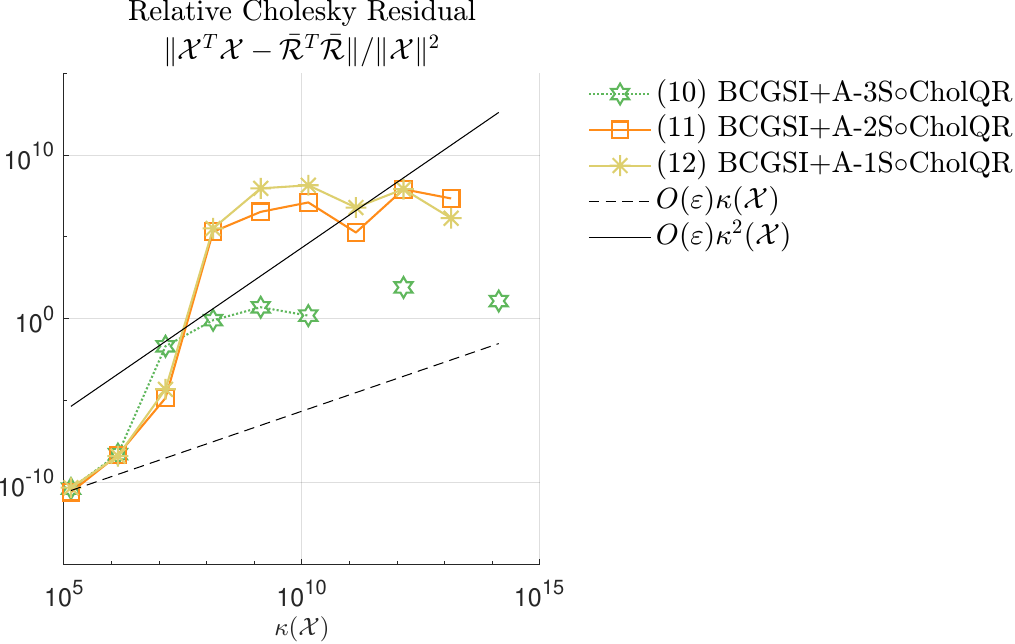}}
	    \end{tabular}
	\end{center}
    \caption{Comparison among low-sync versions of \BCGSIROA on a class of \piled matrices. Note that $\IOAnoarg$ is fixed as \HouseQR. \label{fig:roadmap_4_piled}}
\end{figure}

For completeness, we also provide Figure~\ref{fig:roadmap_piled}, which aggregates plots for all methods discussed on the \piled matrices.  Compare with Figures~\ref{fig:roadmap_monomial} and \ref{fig:roadmap_default}.  In particular, the \piled matrices are quite tough for \BCGS and \BCGSA.

\begin{figure}[htbp!]
	\begin{center}
	    \begin{tabular}{cc}
	         \resizebox{.33\textwidth}{!}{\includegraphics[trim={0 0 210pt 0},clip]{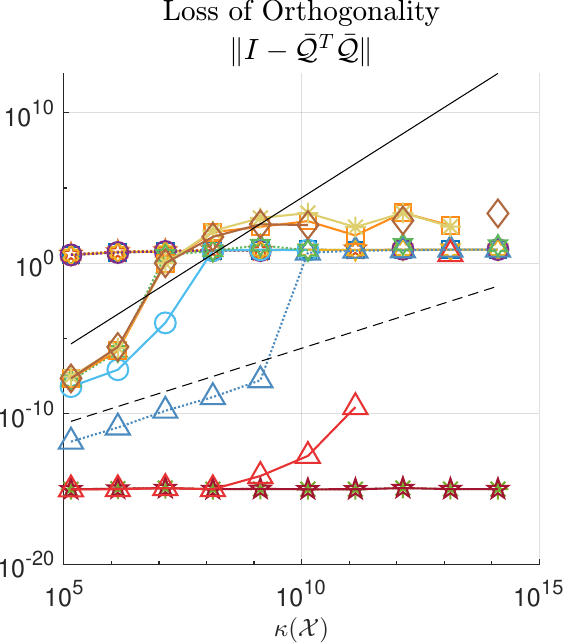}} &
	         \resizebox{.58\textwidth}{!}{\includegraphics{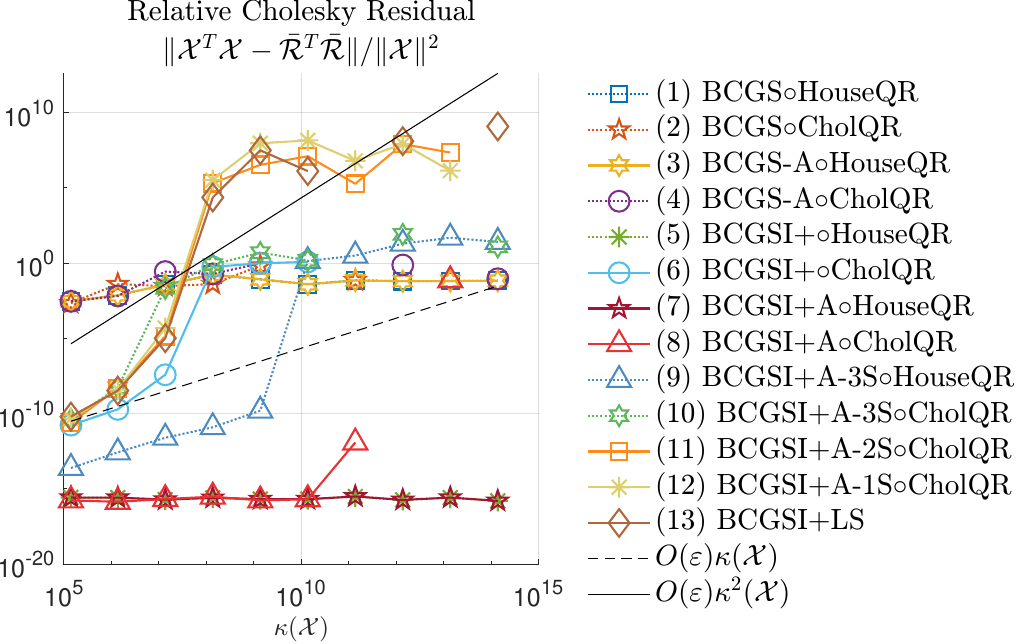}}
	    \end{tabular}
	\end{center}
    \caption{Comparison among \BCGS, \BCGSIRO, \BCGSIROA, and low-sync variants thereof on \piled matrices. \BCGSIROLS is Algorithm~7 from \cite{CarLRetal22}. Note that $\IOAnoarg$ is fixed as \HouseQR in \texttt{BlockStab}. \label{fig:roadmap_piled}}
\end{figure}


Despite the rather pessimistic behavior for block variants we have observed, there is yet good news for the column version of \BCGSIROAone, i.e., when $s = 1$.  Our results apply trivially to the column version and in fact address the open issue of the stability of a one-sync, reorthogonalized \CGS, first introduced as \cite[Algorithm~3]{SwiLAetal21} and revisited as DCGS2 in \cite{BieLTetal22}.

\begin{corollary}
    Let $\bQQbar$ and $\RRbar$ denote the computed result of Algorithm~\ref{alg:BCGSIROA3S}, \ref{alg:BCGSIROA2S}, or~\ref{alg:BCGSIROA1S} with $s = 1$.  If $\bigO{\eps} \kappa(\bXX) \leq \frac{1}{2}$ is satisfied, then
    \begin{equation} \label{eq:cor:backwarderror}
        \bXX + \Delta\bXX = \bQQbar\RRbar,
        \quad\norm{\Delta\bXX} \leq \bigO{\eps} \norm{\bXX}
    \end{equation}
    and
    \begin{equation} \label{eq:cor:loo}
        \norm{I - \bQQbar^T \bQQbar} \leq \bigO{\eps}.
    \end{equation}
\end{corollary}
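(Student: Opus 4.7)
The plan is to specialize the per-step analysis of each of Theorems~\ref{thm:bcgsiroa3s}, \ref{thm:bcgsiroa2s}, and \ref{thm:bcgsiroa1s} to $s=1$, where two simplifications kick in. First, intraorthogonalization of a single column is just scalar normalization $\vq = \vv / \norm{\vv}$ (for a $1 \times 1$ matrix, $\chol$ is the positive square root, so \CholQR coincides with \HouseQR), so $\epsqr, \epsQ = \bigO{\eps}$ hold unconditionally and $\alpha_A = \alpha = \alpha_1 = \alpha_2 = 0$ in the notation of Section~\ref{sec:framework}. Second, every ``block vector'' now has condition number $1$, so every occurrence of $\kappa(\vWtil_k)$, $\kappa(\vVtil_k)$, or $\kappa(\vGtil)$ inside the generic machinery collapses to unity.

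The residual bound \eqref{eq:cor:backwarderror} is inherited verbatim from the residual analyses already performed inside each of the three theorems, so only the LOO statement requires a sharper argument. For the LOO I would induct on $k$, with the base case $\omega_1 = \bigO{\eps}$ supplied by the $s=1$ version of $\IOAnoarg$. For the inductive step, starting with Algorithm~\ref{alg:BCGSIROA3S}, I would revisit Lemma~\ref{lem:bcgsiroa3s-kinnerloop} and observe that after the two simplifications, every term carrying a factor of $\kappa(\bXX_k)$--those arising from $\epsproj/\sigmin(\vWtil_k)$ and from the bound on $\norm{\bQQbar_{k-1}^T \vWtil_k \Rbar_{kk}^\inv}$--now appears with a prefactor of $\epsQkp$ or $\epsQkp^2$. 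Under the inductive hypothesis $\epsQkp = \bigO{\eps}$ and the global assumption $\bigO{\eps}\kappa(\bXX) \leq \tfrac12$, each such contribution is bounded by $\bigO{\eps^2}\kappa(\bXX) = \bigO{\eps}$, which preserves the induction and yields \eqref{eq:cor:loo} for \BCGSIROAthree.

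The chief technical obstacle I anticipate lies with Algorithms~\ref{alg:BCGSIROA2S} and~\ref{alg:BCGSIROA1S}, where Lemmas~\ref{lem:bcgspipiro} and~\ref{lem:bcgsiroa1s} contain a $\bigO{\eps}\kappa^2(\bXX_k)$ term and a factor $\kappa(\bXX_{k-1})$ in the projection-error bound, respectively, which would naively demand $\bigO{\eps}\kappa^2(\bXX) \leq \tfrac12$ rather than the weaker hypothesis at hand. The remedy is again to exploit the scalar case: once $\epsQkp = \bigO{\eps}$, the projected remainder $\bQQbar_{k-1}^T \vVbar_k$ is $\bigO{\eps}\norm{\vX_k}$, so under $\bigO{\eps}\kappa(\bXX) \leq \tfrac12$ the ratios $\norm{\vVbar_k}/\norm{\vWbar_k}$ and $\norm{\vVbar_k}/|\Ybar_{kk}|$ are both $1 + \bigO{\eps}$. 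Re-auditing the scalar Cholesky step $Y_{kk}^2 = \norm{\vVbar_k}^2 - \norm{\bQQbar_{k-1}^T \vVbar_k}^2$ in Lemma~\ref{lem:bcgspipiro} and the nested induction in Lemma~\ref{lem:bcgsiroa1s} with this observation shows that the offending $\kappa$ factors are block-analysis artefacts that collapse to $\bigO{\eps}$ in the scalar case, after which the induction closes uniformly across Algorithms~\ref{alg:BCGSIROA3S}--\ref{alg:BCGSIROA1S}.
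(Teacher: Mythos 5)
Your proposal is correct and follows essentially the same route as the paper's proof: specialize the per-step lemmas (Lemmas~\ref{lem:bcgsiroa3s-kinnerloop}, \ref{lem:bcgspipiro}, and \ref{lem:bcgsiroa1s}) to $s=1$, exploit the fact that single columns have condition number one so that $\kappa(\vWtil_k)$, $\kappa(\vWbar_k)$, and $\kappa(\Ybar_{kk})$ all collapse to $1$, and close the induction with $\epsQkp = \bigO{\eps}$ under $\bigO{\eps}\kappa(\bXX)\le\tfrac12$. The only cosmetic difference is in the two-sync case, where the paper argues directly about $\sigmin(\vWbar_k)$ versus $\norm{\vX_k}$ while you control the ratio $\norm{\vVbar_k}/\norm{\vWbar_k}$ (which is $\bigO{1}$ rather than $1+\bigO{\eps}$ under the stated hypothesis, but a constant bound suffices); the substance of the argument is the same.
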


\begin{proof}
    First note that the proof of Theorem~\ref{thm:bcgsiroa3s} is based on Lemma~\ref{lem:bcgsiroa3s-kinnerloop}.  Now we aim to derive a new version of Lemma~\ref{lem:bcgsiroa3s-kinnerloop} for $s = 1$. Notice that for $s = 1$, $\vX_k$ has only one column and furthermore $\vWtil_k$ defined by~\eqref{eq:definition-Wtilk} also has only one column, which trivially implies that $\kappa(\vWtil_k) = 1$. Combining this realization with~\eqref{eq:lem:bcgsiroa:epsQk}, we obtain
    \begin{equation}\label{eq:s1:epsQk}
        \begin{split}
            \norm{I - \bQQbar_k^T \bQQbar_k}
            &\leq \epsQkp + \bigO{\eps} + \bigO{\eps} \epsQkp \kappa(\bXX_k) \\
            & \quad + 4 (1 + \epsQkp) \epsQkp^2 (1 + \epsQ) \kappa(\bXX_k) + \epsQ
        \end{split}
    \end{equation}
    with the assumption $\bigO{\eps} \kappa(\bXX_k) \leq\frac{1}{2}$. Then we use~\eqref{eq:s1:epsQk} instead of Lemma~\ref{lem:bcgsiroa3s-kinnerloop}, similarly to the proof of Theorem~\ref{thm:bcgsiroa3s}, to conclude that~\eqref{eq:cor:backwarderror} and~\eqref{eq:cor:loo} hold for $\BCGSIROAthree$ with $s = 1$. 

    Recalling Section~\ref{sec:BCGSIROA2S}, the only difference between $\BCGSIROAtwo$ and $\BCGSIROAthree\circ\CholQR$ is the estimation of $\epsQ$, i.e., $\norm{I - \vQbar_k^T \vQbar_k}$, which has been bounded in Lemma~\ref{lem:bcgspipiro}. When $s = 1$, $\kappa(\vWbar_k) = 1$, and by~\eqref{eq:bcgsiroa2s-LOO},
    \[
        \sigmin (\vWbar_k) = \norm{\vWbar_k} \geq \norm{I - \bQQbar_{k-1} \bQQbar_{k-1}^T}^2 \sigmin(\vX_k)
        \geq \sigmin(\vX_k) = \norm{\vX_k}.
    \]
    We then have
    \begin{align*}
        \norm{I - \vQbar_k^T \vQbar_k}
        & \leq 2 \cdot (1 + \epsQkp)^2 \epsQkp^3 + \bigO{\eps}.
    \end{align*}
    Similarly to the proof of Theorem~\ref{thm:bcgsiroa2s}, we can prove that~\eqref{eq:cor:backwarderror} and~\eqref{eq:cor:loo} hold for $\BCGSIROAtwo$ with $s = 1$.

    For $\BCGSIROAone$, we only need to rewrite Lemma~\ref{lem:bcgsiroa1s} for $s = 1$. Since  $\kappa(\bXX_{1}) = \kappa(\vX_1) = 1$, $\kappa(\bXX_{1})$ can be eliminated from the upper bounds of $\Delta\SS_{1,2}$, $\Delta \vVtil_2$, and $\Delta\vWtil_2$ for the base case, i.e., \eqref{eq:lem:bcgsiroa1s:i-1case} with $i = 2$. Furthermore, $\kappa(\bXX_{i-1})$ can be eliminated from the upper bounds of $\Delta\SS_{i-1,i}$, $\Delta \vVtil_i$, and $\Delta\vWtil_i$ in~\eqref{eq:lem:bcgsiroa1s:i-1case}. Then following the same process of Lemma~\ref{lem:bcgsiroa1s}, it holds that 
    \[
        \norm{\vWbar_k - \vWtil_k}\leq \epsprojbcgsiroaone_k \leq \bigO{\eps} \norm{\vX_k}.
    \]
    In analogue to the proof of Theorem~\ref{thm:bcgsiroa1s}, we can conclude the proof for $\BCGSIROAone$ with $s = 1$.
\end{proof}

Figure~\ref{fig:roadmap_piled_column} shows a comparison among column versions of the methods discussed here.  Note that for all methods $\IOAnoarg = \IOonenoarg = \IOtwonoarg$, as all QR subroutines reduce to scaling a column vector by its norm.  Consequently, all versions of \BCGS and \BCGSA are equivalent, as well as \BCGSIRO and \BCGSIROA, etc.  Such redundancies have been removed to make the figure more legible.  Clearly all variants except \BCGS/\BCGSA achieve $\bigO{\eps}$ LOO and relative Cholesky residual.

\begin{figure}[htbp!]
	\begin{center}
	    \begin{tabular}{cc}
	         \resizebox{.33\textwidth}{!}{\includegraphics[trim={0 0 210pt 0},clip]{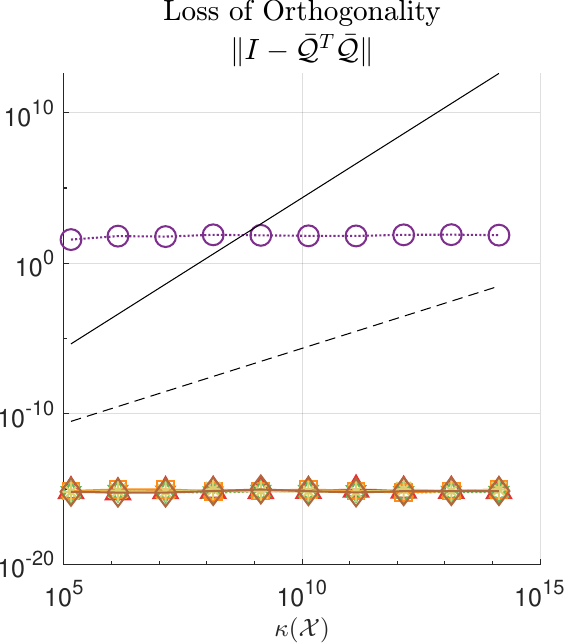}} &
	         \resizebox{.58\textwidth}{!}{\includegraphics{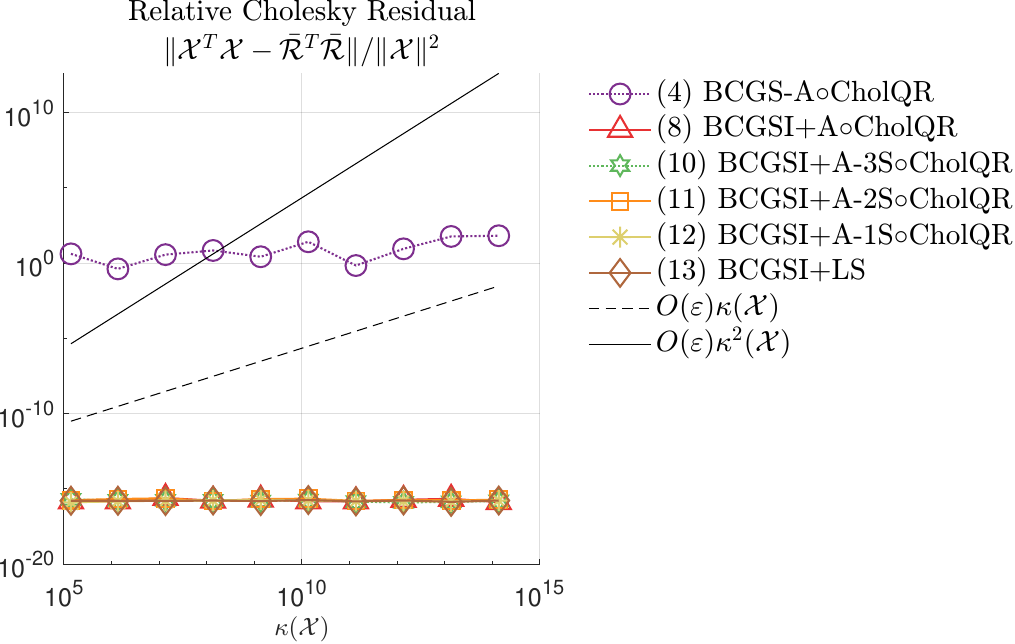}}
	    \end{tabular}
	\end{center}
    \caption{Comparison among column variants ($s=1$) for \piled matrices. \BCGSIROLS is Algorithm~7 from \cite{CarLRetal22}. Note that $\IOAnoarg$ is fixed as \HouseQR in \texttt{BlockStab}. \label{fig:roadmap_piled_column}}
\end{figure}
\section{Conclusions} \label{sec:conclusions}

In this work, we provide a number of new results on the loss of orthogonality in variants of block Gram-Schmidt. To enable a uniform approach to our analyses, we introduce an abstract framework which considers the effects of the projection and intraorthogonalization
stages of a \BCGS method. 

We first introduce a modification of the basic \BCGS method, called \BCGSA, in which the \IOnoarg used for the first block vector can be different than that used for subsequent blocks. We prove a bound on the resulting LOO. As a side effect, this gives the first known bound on the LOO for \BCGS in the literature. 

We then introduce a reorthogonalized variant of \BCGSA, \BCGSIROA, and prove a bound on its LOO. Our results reproduce the bound given by Barlow and Smoktunowicz for the case that a single \IOnoarg is used throughout \cite{BarS13}. Further, our analysis provides a valuable insight: we only need a ``strong'' \IOnoarg on the very first block. After this, less expensive and less stable \IOnoargs can be used. The first \IOnoarg used in the main loop  determines the constraint on the condition number of $\bXX$. The second \IOnoarg, used for the reorthogonalization, has no effect. 

The resulting \BCGSIROA has four synchronization points. We then demonstrate, through a series of steps, how each sequential removal of a synchronization point affects the LOO and constraints on the condition number for which the bound holds. We eventually reach a low-sync version with only a single synchronization point, equivalent to methods previously proposed in the literature, and we show that unfortunately, the LOO depends on the square of the condition number, which has been conjectured previously in \cite{CarLRetal22}. 

Despite the unsatisfactory results for the block variant, our analysis also gives bounds for column (non-block) one-sync variants which have been proposed in the literature \cite{BieLTetal22, SwiLAetal21}, and it is shown that these attain a LOO on the level of the unit roundoff.


\section*{Acknowledgments}%
\addcontentsline{toc}{section}{Acknowledgments}
The second author would like to thank the Computational Methods in Systems and Control Theory group at the Max Planck Institute for Dynamics of Complex Technical Systems for funding the fourth author's visit in March 2023. The fourth author would like to thank the Chemnitz University of Technology for funding the second author's visit in July 2023.

The first, third, and fourth authors are supported by the European Union (ERC, inEXASCALE, 101075632). Views and opinions expressed are those of the authors only and do not necessarily reflect those of the European Union or the European Research Council. Neither the European Union nor the granting authority can be held responsible for them. The first and the fourth authors acknowledge support from the Charles University GAUK project No. 202722 and the Exascale Computing Project (17-SC-20-SC), a collaborative effort of the U.S. Department of Energy Office of Science and the National Nuclear Security Administration. The first author acknowledges support from the Charles University Research Centre program No. UNCE/24/SCI/005.



\addcontentsline{toc}{section}{References}
\bibliographystyle{abbrvurl}
\bibliography{BlockStab}

@article{Bar24,
  title = {Reorthogonalized Block Classical {{Gram-Schmidt}} Using Two {{Cholesky-based TSQR}} Algorithms},
  author = {Barlow, Jesse L.},
  year = {2024},
  journal = {SIAM J. Matrix Anal. Appl.},
  volume = {45},
  number = {3},
  pages = {1487--1517},
  doi = {10.1137/23M1605387}
}

@article{Zou23,
  title = {A Flexible Block Classical {{Gram}}--{{Schmidt}} Skeleton with Reorthogonalization},
  author = {Zou, Qinmeng},
  year = {2023},
  journal = {Numerical Linear Algebra with Applications},
  volume = {30},
  number = {5},
  pages = {e2491},
  doi = {10.1002/nla.2491}
}

@article{CarLR21,
  title = {The Stability of Block Variants of Classical {{Gram-Schmidt}}},
  author = {Carson, Erin and Lund, Kathryn and Rozlo{\v z}n{\'i}k, Miroslav},
  year = {2021},
  journal = {SIAM J. Matrix Anal. Appl.},
  volume = {42},
  number = {3},
  pages = {1365--1380},
  doi = {10.1137/21M1394424},
  copyright = {All rights reserved}
}

@article{CarLRetal22,
  title = {Block {{Gram-Schmidt}} Algorithms and Their Stability Properties},
  author = {Carson, Erin and Lund, Kathryn and Rozlo{\v z}n{\'i}k, M. and Thomas, S.},
  year = {2022},
  journal = {Linear Algebra Appl.},
  volume = {638},
  number = {20},
  pages = {150--195},
  doi = {10.1016/j.laa.2021.12.017},
  copyright = {All rights reserved}
}

@article{Lun23,
  title = {Adaptively Restarted Block {{Krylov}} Subspace Methods with Low-Synchronization Skeletons},
  author = {Lund, Kathryn},
  year = {2023},
  journal = {Numerical Algorithms},
  volume = {93},
  number = {2},
  pages = {731--764},
  doi = {10.1007/s11075-022-01437-1}
}

@article{GreRS97,
  title = {Numerical Behaviour of the Modified {{Gram-Schmidt GMRES}} Implementation},
  author = {Greenbaum, Anne and Rozlo{\v z}n{\'i}k, Miroslav and Strako{\v s}, Zden{\v e}k},
  year = {1997},
  journal = {BIT Numerical Mathematics},
  volume = {37},
  number = {3},
  pages = {706--719},
  doi = {10.1007/BF02510248}
}

@article{BieLTetal22,
  title = {Low-Synch {{Gram}}--{{Schmidt}} with Delayed Reorthogonalization for {{Krylov}} Solvers},
  author = {Bielich, Daniel and Langou, Julien and Thomas, Stephen and {\'S}wirydowicz, Kasia and Yamazaki, Ichitaro and Boman, Erik G.},
  year = {2022},
  journal = {Parallel Computing},
  volume = {112},
  pages = {102940},
  doi = {10.1016/j.parco.2022.102940}
}

@book{GolV13,
  title = {Matrix {{Computations}}},
  author = {Golub, G. H. and Van Loan, C. F.},
  year = {2013},
  series = {Johns {{Hopkins Studies}} in the {{Mathematical Sciences}}},
  edition = {4},
  publisher = {Johns Hopkins University Press},
  address = {Baltimore},
  isbn = {978-1-4214-0794-4}
}

@phdthesis{Hoe10,
  title = {Communication-Avoiding {{Krylov}} Subspace Methods},
  author = {Hoemmen, M.},
  year = {2010},
  url = {http://www2.eecs.berkeley.edu/Pubs/TechRpts/2010/EECS-2010-37.pdf},
  school = {Department of Computer Science, University of California at Berkeley}
}

@article{BarS13,
  title = {Reorthogonalized Block Classical {{Gram-Schmidt}}},
  author = {Barlow, Jesse L. and Smoktunowicz, Alicja},
  year = {2013},
  journal = {Numerische Mathematik},
  volume = {123},
  pages = {395--423},
  doi = {10.1007/s00211-012-0496-2}
}

@article{BalCDetal14,
  title = {Communication Lower Bounds and Optimal Algorithms for Numerical Linear Algebra},
  author = {Ballard, Greg and Carson, Erin and Demmel, James and Hoemmen, Mark and Knight, N. and Schwartz, O.},
  year = {2014},
  journal = {Acta Numerica 2011, Vol 20},
  volume = {23},
  number = {2014},
  pages = {1--155},
  doi = {10.1017/S0962492914000038}
}

@article{SwiLAetal21,
  title = {Low Synchronization {{Gram}}--{{Schmidt}} and Generalized Minimal Residual Algorithms},
  author = {{\'S}wirydowicz, Katarzyna and Langou, Julien and Ananthan, Shreyas and Yang, Ulrike and Thomas, Stephen},
  year = {2021},
  journal = {Numerical Linear Algebra with Applications},
  volume = {28},
  number = {2},
  pages = {e2343},
  doi = {10.1002/nla.2343}
}

@article{DemGGetal15,
  title = {Communication Avoiding Rank Revealing {{QR}} Factorization with Column Pivoting},
  author = {Demmel, James W. and Grigori, Laura and Gu, Ming and Xiang, Hua},
  year = {2015},
  journal = {SIAM Journal on Matrix Analysis and Applications},
  volume = {36},
  number = {1},
  pages = {55--89},
  doi = {10.1137/13092157X}
}

@article{DemGHetal12,
  title = {Communication-Optimal Parallel and Sequential {{QR}} and {{LU}} Factorizations},
  author = {Demmel, J. and Grigori, L. and Hoemmen, M. and Langou, J.},
  year = {2012},
  journal = {SIAM J. Sci. Comput.},
  volume = {34},
  number = {1},
  pages = {A206--A239},
  doi = {10.1137/080731992}
}

@article{GirLRetal05,
  title = {Rounding Error Analysis of the Classical {{Gram-Schmidt}} Orthogonalization Process},
  author = {Giraud, L. and Langou, J. and Rozlo{\v z}n{\'i}k, M. and Van Den Eshof, J.},
  year = {2005},
  journal = {Numerische Mathematik},
  volume = {101},
  pages = {87--100},
  doi = {10.1007/s00211-005-0615-4}
}

@article{MorYZ12,
  title = {Backward Error Analysis of the {{AllReduce}} Algorithm for Householder {{QR}} Decomposition},
  author = {Mori, D. and Yamamoto, Y. and Zhang, S. L.},
  year = {2012},
  journal = {Japan Journal of Industrial and Applied Mathematics},
  volume = {29},
  number = {1},
  pages = {111--130},
  doi = {10.1007/s13160-011-0053-x}
}

@phdthesis{Car15,
  title = {Communication-{{Avoiding Krylov Subspace Methods}} in {{Theory}} and {{Practice}}},
  author = {Carson, Erin Claire},
  year = {2015},
  url = {http://escholarship.org/uc/item/6r91c407},
  school = {Department of Computer Science, University of California, Berkeley}
}

@techreport{CarLMetal24a,
  type = {E-Print},
  title = {Reorthogonalized {{Pythagorean}} Variants of Block Classical {{Gram-Schmidt}}},
  author = {Carson, Erin and Lund, Kathryn and Ma, Yuxin and Oktay, Eda},
  year = {2024},
  number = {2405.01298v1},
  institution = {arXiv},
  doi = {10.48550/arXiv.2405.01298}
}

@techreport{ButHMetal24,
  title = {A Modular Framework for the Backward Error Analysis of {{GMRES}}},
  author = {Buttari, Alfredo and Higham, Nicholas J and Mary, Th{\'e}o and Vieubl{\'e}, Bastien},
  year = {2024},
  number = {hal-04525918},
  institution = {HAL science ouverte},
  url = {https://hal.science/hal-04525918}
}

@inproceedings{YamHBetal24,
  title = {Two-{{Stage Block Orthogonalization}} to {{Improve Performance}} of s-Step {{GMRES}}},
  booktitle = {2024 {{IEEE International Parallel}} and {{Distributed Processing Symposium}} ({{IPDPS}})},
  author = {Yamazaki, Ichitaro and Higgins, Andrew J. and Boman, Erik G. and Szyld, Daniel B.},
  year = {2024},
  pages = {26--37},
  address = {San Francisco, CA, USA},
  doi = {10.1109/IPDPS57955.2024.00012}
}

@article{OktC23,
  title = {Using {{Mixed Precision}} in {{Low-Synchronization Reorthogonalized Block Classical Gram-Schmidt}}},
  author = {Oktay, Eda and Carson, Erin},
  year = {2023},
  journal = {PAMM},
  volume = {23},
  number = {1},
  pages = {e202200060},
  doi = {10.1002/pamm.202200060}
}

@book{Hig02,
  title = {Accuracy and Stability of Numerical Algorithms},
  author = {Higham, Nicholas J.},
  year = {2002},
  edition = {2nd ed},
  publisher = {{Society for Industrial and Applied Mathematics}},
  address = {Philadelphia},
  isbn = {978-0-89871-521-7},
  lccn = {QA297 .H53 2002}
}

@inproceedings{YamTHetal20,
  title = {Low-Synchronization Orthogonalization Schemes for s-Step and Pipelined {{Krylov}} Solvers in {{Trilinos}}},
  booktitle = {Proceedings of the 2020 {{SIAM Conference}} on {{Parallel Processing}} for {{Scientific Computing}} ({{PP}})},
  author = {Yamazaki, I. and Thomas, S. and Hoemmen, M. and Boman, E. G. and {\'S}wirydowicz, K. and Eilliot, J. J.},
  year = {2020},
  pages = {118--128},
  doi = {10.1137/1.9781611976137.11}
}

@article{YamNYetal15,
  title = {Roundoff Error Analysis of the {{Cholesky QR2}} Algorithm},
  author = {Yamamoto, Y. and Nakatsukasa, Y. and Yanagisawa, Y. and Fukaya, T.},
  year = {2015},
  journal = {Electronic Transactions on Numerical Analysis},
  volume = {44},
  pages = {306--326},
  url = {http://www.emis.de/journals/ETNA/vol.44.2015/pp306-326.dir/pp306-326.pdf}
}

@book{KieS88,
  title = {Numerische Lineare {{Algebra}}: Eine Computerorientierte {{Einf{\"u}hrung}}},
  author = {Kie{\l}basi{\'n}ski, Andrzej and Schwetlick, Hubert},
  year = {1988},
  publisher = {Mathematik f{\"u}r Naturwissenschaft und Technik 18. Deutscher Verlag der Wissenschaften},
  address = {Berlin},
  isbn = {978-3-326-00194-4}
}

@article{CarMa2025,
  title={A Stable One-Synchronization Variant of Reorthogonalized Block Classical {G}ram--{S}chmidt},
  author={Carson, Erin and Ma, Yuxin},
  journal={SIAM Journal on Scientific Computing},
  volume={47},
  number={4},
  pages={A2353--A2377},
  year={2025},
  publisher={SIAM}
}

\end{document}